\def\proof{\noindent{\it Proof}.}
\def\endproof{\ensuremath{\hfill\quad \square}\newline}
\def\namedlabel#1#2{\begingroup
	#2%
	\def\@currentlabel{#2}%
	\phantomsection\label{#1}\endgroup
}
\newcommand{\R}{\mathbb{R}}
\newcommand{\N}{\mathbb{N}}
\newcommand{\tto}{\rightrightarrows}
\newcommand{\gph}{\operatorname{gph}}
\newcommand{\dom}{\operatorname{dom}}
\newcommand{\CC}{\mathcal{C}}
\newcommand{\ov}{\overline}
 \newcommand{\epi}{\operatorname{epi}}
 \newcommand{\cco}{\overline{\operatorname{co}}}
\def\Hat{\widehat}
\renewcommand{\R}{\mathbb{R}}
\renewcommand{\N}{\mathbb{N}}
\renewcommand{\tto}{\rightrightarrows}
\renewcommand{\gph}{\operatorname{gph}}
\renewcommand{\dom}{\operatorname{dom}}
\renewcommand{\CC}{\mathcal{C}}
\renewcommand{\epsilon}{\varepsilon}
\renewcommand{\ov}{\overline}
 \renewcommand{\epi}{\operatorname{epi}}
 \renewcommand{\cco}{\overline{\operatorname{co}}}
\def\Hat{\widehat}
\crefname{hypothesis}{Hypothesis}{Hypotheses}
\title{A Newton-Like Dynamical System for Nonsmooth and Nonconvex Optimization\thanks{Submitted to the editors DATE.
\funding{Research of the first author was partially supported by ANID BECAS/DOCTORADO NACIONAL 21230802. Research of the second and third author was partially supported   by  Centro de Modelamiento Matem\'atico
(CMM), ACE210010 and FB210005, BASAL
funds for center of excellence and  ANID-Chile grant: MATH-AMSUD 23-MATH-09 and
 MATH-AMSUD 23-MATH-17, ECOS-ANID ECOS320027, Fondecyt Regular 1220886,
 Fondecyt Regular 1240335, Fondecyt Regular 1240120 and Exploración  13220097.}}}
\author{Juan Guillermo Garrido\thanks{Departamento de Ingenier\'ia Matem\'atica,  Universidad de Chile, Santiago, Chile
  (\email{jgarrido@dim.uchile.cl} ).}
\and Pedro Pérez-Aros\thanks{Departamento de Ingeniería Matem\'atica and Centro de Modelamiento Matem\'atico  (CNRS UMI 2807), Santiago, Chile 
  (\email{pperez@dim.uchile.cl}).}
\and Emilio Vilches\thanks{Instituto de Ciencias de la Ingenier\'ia, Universidad de O'Higgins, Rancagua, Chile (\email{emilio.vilches@uoh.cl}).}}
\newcommand*{\addFileDependency}[1]{
  \typeout{(#1)}
  \@addtofilelist{#1}
  \IfFileExists{#1}{}{\typeout{No file #1.}}
}
\newcommand*{\myexternaldocument}[1]{%
    \externaldocument{#1}%
    \addFileDependency{#1.tex}%
    \addFileDependency{#1.aux}%
}
\begin{document}

\maketitle

\begin{abstract}
  This work investigates a dynamical system functioning as a nonsmooth adaptation of the continuous Newton method, aimed at minimizing the sum of a  primal lower-regular and  a locally Lipschitz function, both potentially nonsmooth. The classical Newton method’s second-order information is extended by incorporating the graphical derivative of a locally Lipschitz mapping. Specifically, we analyze the existence and uniqueness of solutions, along with the asymptotic behavior of the system's trajectories. Conditions for convergence and respective convergence rates are established under two distinct scenarios: strong metric subregularity and satisfaction of the Kurdyka-Łojasiewicz inequality.
\end{abstract}

\begin{keywords}
  Nonsmooth  programming, nonsmooth Newton method, asymptotic analysis, global convergence, convergence rates, dynamical systems, variational analysis
\end{keywords}

\begin{AMS}
  34G25, 34G20, 34A60, 49J53
\end{AMS}

\def\hhat#1{\Hat{\vbox{\baselineskip=0pt\vskip-1.8pt
\hbox{$\Hat{\vbox{\baselineskip=0pt\vskip-
0.8pt\hbox{$\!#1$}}}$}}}}
\def\HHat#1{\widehat{\vbox{\baselineskip=0pt\vskip-1.5pt
\hbox{$\widehat{\vbox{\baselineskip=0pt\vskip-
0.5pt\hbox{$#1$}}}$}}}}
	

\setlength {\marginparwidth }{2cm} 

\def\proof{\noindent{\it Proof}.}
\def\endproof{\ensuremath{\hfill\quad \square}\newline}
%
%

%
%
%
%
%
\makeatletter
\def\namedlabel#1#2{\begingroup
	#2%
	\def\@currentlabel{#2}%
	\phantomsection\label{#1}\endgroup
}
\makeatother

\section{Introduction}

Newton's method is one of the most prominent algorithms for solving unconstrained optimization problems. By utilizing first- and second-order information, it achieves quadratic convergence rates, provided that the initial condition is sufficiently close to a critical point. The principles behind this algorithm have served as the foundation for the development of numerous efficient algorithms for both linear and nonlinear optimization. 

However, the requirements for the applicability of this algorithm can be quite restrictive in modern optimization problems, where the lack of smoothness or convexity of the objective function occurs frequently. From a more classical perspective, to address this difficulty, algorithms have been developed where, at each iteration, the objective function is replaced by a local approximation with the desired properties of differentiability or convexity. Notable examples of this paradigm include trust-region algorithms and difference-of-convex  methods (see, e.g., \cite{MR3785670}). 

A different paradigm involves directly addressing the non-differentiability or non-convexity of the objective function without relying on surrogates, using tools from generalized differentiation and variational analysis. In this direction, we highlight the development of non-smooth Newton methods, which, instead of requiring the continuous second-order differentiability of the function, only impose that the objective function belongs to the class $\mathcal{C}^{1,+}$ (see, e.g., \cite{MR4252735, MR4675935, MR4578210, MR4728882} and the references therein). Furthermore, in \cite{2023arXiv230103491A}, the authors introduce a novel approach for objective functions that can be decomposed as the difference between a $\mathcal{C}^{1,+}$ function and a prox-regular function. These algorithms, inspired by the classical Newton's method, leverage second-order information about the objective function using a class of generalized Hessians. Advanced techniques in variational analysis enable satisfactory convergence results for the proposed algorithms, along with faster convergence rates in certain cases compared to methods that rely solely on first-order information.

In this paper, we follow this second approach and we study the convergence of a Newton-like dynamical system associated with the optimization problem for objective functions that are not necessarily convex or differentiable.

The study of continuous versions of optimization algorithms is a classical topic in this field. It is widely recognized that continuous versions of discrete algorithms offer several advantages, both from a theoretical and practical perspective (see, e.g., \cite{MR3555044}). On the one hand, they enable convergence analysis through the stability theory of dynamical systems, and on the other hand, they facilitate the understanding of essential aspects of the algorithms, such as their derivation, and parameter tuning, stability improvements,  among others. In particular, different approaches to discretizing a continuous dynamical system lead to iterative algorithms, whose convergence analysis often parallels that of the continuous case.

In the continuous setting, the Newton method has been an interesting topic of study. 
Its global convergence was studied in \cite{MR0411577}, and in \cite{MR1624673}, the authors addressed the convergence rates in the convex and smooth setting. Subsequently, several works have explored inertial dynamics associated with the continuous Newton method (see, e.g., \cite{MR3985196, MR4328493, MR1930878, MR4592864, MR4695766}). 

Following the continuous setting, we propose a dynamical system associated with a nonsmooth Newton method for the minimization of nonsmooth and nonconvex functions. Formally, we consider the structured nonsmooth mathematical program 
    \begin{equation}\label{mainProblem}
         \min \varphi (x), \text{ with } \varphi:=\varphi_1 +\varphi_2,
    \end{equation}
    where $\varphi_1\colon \R^d\to \R\cup \{\infty\}$ is a primal lower regular (plr) function, and  $\varphi_2 \colon \R^d \to \R$ is locally Lipschitz function. It is important to highlight that the aforementioned optimization problem is quite general. On the one hand, working with extended-valued functions enables dealing with constrained optimization. On the other hand, plr functions cover a wide class of functions with an underlying nonsmooth structure. Among these functions, we can find all lower semicontinuous (lsc) proper convex functions, lower-$C^2$ functions, and qualified convexly composite functions (see, e.g., \cite{MR4659163}). 

With the problem \eqref{mainProblem} in mind, we propose the following dynamical system to model a fully nonsmooth continuous dynamical Newton-like method. We aim to find an absolutely continuous function \( x \colon \R_+ \to \mathbb{R}^d \) that satisfies the following conditions: There are measurable functions $v_1,v_2\colon \R_+\to \R^d$ such that  $v_1(t)\in \partial\varphi_1(x(t))$ and $v_2(t)\in \partial \varphi_2(x(t))$ for a.e. $t\in \R_+$ and
	\begin{equation*}
		\begin{aligned}
			v_1(t) + v_2(t)&\in -DF(x(t))(\dot{x}(t)) \text{ a.e. on } \R_+ \\
			x(0) &= x_0.
		\end{aligned}
	\end{equation*}
	where $x_0\in \R^d$ is given, and $F\colon \R^d\to \R^d$ is a locally Lipschitz function. Here, $\partial f $ denotes the Clarke subdifferential of a function $f$, and $DF$ represents the graphical derivative of the mapping $F$ (see definitions below). The dynamical system \eqref{general_system} is referred to as Newton-like because if $F$ is the gradient of a function, the system becomes the Newton method. Moreover, considering a general operator $F$ allows for the treatment of more general cases in the spirit of quasi-Newton method or Levenberg-Marquardt algorithm (see, e.g., \cite{MR3289054} and the references therein). 
 
The aim of this paper is to investigate the well-posedness and the asymptotic convergence of the solutions to the dynamical system \eqref{general_system}. 

 The paper is organized as follows: Section 2 introduces the mathematical background required for the subsequent analysis. Section 3 presents the main result concerning the existence and uniqueness of solutions for the system \eqref{general_system}. In Section 4, we study the asymptotic behavior of the solutions to the system \eqref{general_system}. In particular, we describe the limit set of the trajectories of \eqref{general_system} and demonstrate their convergence under the strong metric subregularity and the Kurdyka-{\L}ojasiewicz inequality. Rates of convergence are established in both cases. To keep it simple, all proofs and technical aspects of the paper are provided in the appendix.

	\section{Notation and preliminary results}
In what follows, we consider the standard concepts and definitions used in nonsmooth and variational analysis. For more details, we refer to the classical monographs \cite{MR1491362, MR3823783, MR4659163}. Throughout the paper, $\R^d$ denotes the $d$-dimensional Euclidean space with the norm $\|\cdot\|$ and the corresponding inner product $\langle\cdot,\cdot\rangle$. For $r > 0$ and $x \in \mathbb{R}^d$, we denote by $\mathbb{B}_r(x)$ and  $\mathbb{B}_r[x]$ the open and closed balls centered at $x$ with radius $r$, respectively. The indicator function of $S$ at $x$ is denoted by $\delta(x,S)$, which equals to $0$ if $x\in S$ and $\infty$ otherwise. the distance from a point $x$ to a nonempty set $S$ is defined as $\operatorname{dist}(x;S):=\inf_{y\in S}\Vert x-y\Vert$. 

\subsection{Variational geometry and generalized differentiation}

	Given $A\subset \R^d$ and $x\in A$, we say that $v$ belongs to the \emph{(Bouligand) tangent cone}, denoted by  $T_A(x)$, if there exist a sequence of positive numbers $(t_n)$ and $(v_n)\subset \R^d$ with $t_n\to 0$ and $v_n\to v$ such that  $x+t_nv_n\in A$ for all $n\in\N$.\\
The  \emph{Fr\'echet normal cone} to $A$ at  $x$ is defined by
\begin{equation*}
    \Hat{N}_A(x) := \left\{    v\in \mathbb{R}^d : \langle v, w  \rangle \leq 0, \text{ for all } w \in T_A(x)        \right\},
\end{equation*}
and, the \emph{Clarke normal} cone to $A$ at $x$, is defined as
\begin{equation*}
    N_A(x) :=\cco \left\{  v: \exists v_n, x_n\in A \textrm{ such that }v_n\in \Hat{N}_A(x_n) \textrm{ and } (x_n,v_n) \to (x,v) \right\},
\end{equation*}
where $\cco$ stands for the closed convex hull of a set.\\
 \noindent Let $f\colon \R^d\to\R\cup\{\infty\}$ be  a lower semicontinuous (lsc) function we recall that its epigraph is defined as $\epi f := \{ (x,\alpha)\in \R^d\times\R : f(x) \leq \alpha \}$. Moreover, for $x\in \R^n$,   we say that an element $\zeta$ belongs to the Clarke subdifferential of $f$ at $x$, denoted by $\partial f(x)$, if $( \xi, -1) \in {N}_{\epi f}(x, f(x))$.\\
A function $f\colon \subset \R^d\to \R\cup\{\infty\}$ is said to be weakly convex around $\bar x$ if there exists  a (convex) neighborhood   $U$ of $\bar x$    such that for all $x,y\in U$ and $\lambda\in [0,1]$ 
\begin{equation*}
    f(\lambda x + (1-\lambda)y)\leq \lambda f(x) + (1-\lambda)f(y) + \sigma\frac{\lambda(1-\lambda)}{2}\|x-y\|^2.    
\end{equation*}
\subsection{Primal Lower Regular Functions}
A notable class of functions is the so-called primal lower regular (plr) functions, originally introduced and studied under the name primal lower nice in \cite{MR1123210}, where their properties were developed in the finite-dimensional setting. This class constitutes an important subclass of prox-regular functions, which are now quite prominent in optimization (see, e.g., \cite{MR1333397}).  The results initially established in finite dimensions were later generalized to the infinite-dimensional setting in \cite{MR3108443}. The following definition of plr functions is taken from \cite[Definition 11.1]{MR4659163}, which is equivalent to the original definition (see \cite[Proposition 11.3]{MR4659163}). A function $f\colon \R^d \rightarrow  \R\cup\{\infty\}$ is said to be \emph{primal lower regular} (plr) at $\bar x$ if there are $\varepsilon >0$ and $c\geq 0$ such that $f$ is lsc on $\mathbb{B}_{\varepsilon}(\bar x)$ and for all $x \in \mathbb{B}_{\varepsilon}(\bar x) \cap \operatorname{dom} \partial f$ and all $\zeta \in \partial f(x)$, the following inequality holds: 
\begin{equation*}
	f(y) \geq f(x)+\langle\zeta, y-x\rangle-c(1+\|\zeta\|)\|y-x\|^2 \textrm{ for all }y \in \mathbb{B}_{\varepsilon}(\bar x).
\end{equation*}
The function $f$ is said plr on an open convex set $\mathcal{O}$ if it is plr at every point of $\mathcal{O}$. Moreover, if $f$ is plr on $\mathcal{O}$ with the same constant $c\geq 0$, we say that  $f$ is $c$-plr on $\mathcal{O}$. 

Let us emphasize that the class of plr functions is quite broad, encompassing $\mathcal{C}^{1,+}$ functions, weakly convex functions, lower semicontinuous (lsc) functions, and qualified convexly
composite functions (see, e.g., \cite[Theorem 11.13]{MR4659163}). Finally, according to \cite[Theorem 11.16]{MR4659163}, if \( f \) is plr at \( \bar{x} \), then several notions of subdifferentials coincide with the Clarke subdifferential at \( \bar{x} \).

\subsection{Tools from set-valued analysis}
	Given a set-valued mapping $F\colon \R^d\tto \R^d$, we define its domain as $\dom F := \{x\in\R^d : F(x)\neq\emptyset\}$ and its graph as $\gph F := \{(x,y)\in\R^d\times\R^d : y\in F(x)\}$. The graphical derivative of $F$ at $(\bar x,\bar y)\in \gph F$ in the direction $\bar v$ is defined by
	\begin{equation*}
		DF(\bar x,\bar y)(\bar v):= \{d\in \R^d:(\bar v,d)\in T_{\gph F}(\bar x,\bar y)\}.
	\end{equation*}
	For simplicity, when $F$ is single-valued, $DF(\bar x,\bar y)$ is denoted by $DF(\bar x)(\bar v)$. 

Let $G\colon \R^d\tto \R^d$ and let $\rho \in \mathbb{R}$. According to \cite{2023arXiv230103491A}, we say that $G$ is $\rho$-lower-definite if
 \begin{equation*}
\langle x,u\rangle\geq \rho \|x\|^2 \quad \text{ for all } \quad  (x,u)\in\gph G.
 \end{equation*}
 It is straightforward to observe that the above definition naturally extends the concept of positive definiteness of matrices. Furthermore, it can be demonstrated that   generalized derivatives of locally monotone operators are indeed lower-definite (see, for instance, \cite[Chapter 5]{MR3823783} for results related to  coderivative mapping and Lemma \ref{lem-smm-ld} for the graphical derivative).\\
        We say that a set valued mapping $F$ is \emph{strongly metrically subregular} at $(\bar{x},\bar{y})\in\gph F$ with modulus   $\kappa\geq 0$  if there are 
 if there are neighborhoods $U$ of $\bar{x}$ and $V$ of $\bar{y}$ such that $\|x-\bar{x}\|\leq \kappa \cdot \text{dist}(\bar y;F(x)\cap V), \forall x\in U$. It is known that the latter definition is equivalent to the existence of a neighborhood $U$ of $\bar x$ s.t. $\|x-\bar x\|\leq \kappa\cdot \text{dist}(\bar y; F(x)), \forall x\in U$ (see, e.g., \cite[p. 194]{MR3288139}).

	\section{Well-posedness of Newton-like Dynamical Systems}

In this section, we study the existence and uniqueness of solutions for the Newton-like dynamical system:
\begin{equation}\label{general_system}
\left\{
\begin{aligned}
&0\in \partial \varphi_1(x(t))+ \partial \varphi_2(x(t))+DF(x(t))(\dot{x}(t)) \quad \textrm{ for a.e. } t\in \mathbb{R}_+,\\
&x(0)=x_0.
\end{aligned}
\right.
\end{equation}
\begin{definition}
    We say that $x\colon \mathbb{R}_+\to \mathbb{R}^d$ is a \emph{solution} of the Newton-like dynamical system \eqref{general_system} if the following conditions hold:
    \begin{enumerate}
        \item[$(a)$] The map $t\mapsto x(t)$ is absolutely continuous on any compact interval of $\mathbb{R}_+$.
        \item[$(b)$] There exist $v,w\in L^2_{\operatorname{loc}}(\mathbb{R}_+;\mathbb{R}^d)$ such that 
        $$
        v(t)\in \partial \varphi_1(x(t)) \textrm{ and } w(t)\in \partial \varphi_1(x(t)) \textrm{ for a.e. } t\in \mathbb{R}_+.
        $$
        \item[$(c)$] For a.e. $t\in \mathbb{R}_+$,  $v(t)+w(t)\in -DF(x(t))(\dot{x}(t))$.
        \end{enumerate}

\end{definition}
 
Due to its generality, the system \eqref{general_system} may exhibit solutions that are not necessarily informative from an optimization point of view. Thus, we introduce the notion of an \emph{energetic solution}, which encompasses the regularity, integrability, and energy-decreasing properties of the trajectories, making it well-suited for optimizing the nonsmooth and nonconvex function $\varphi:=\varphi_1+\varphi_2$ via the dynamical system \eqref{general_system}.
\begin{definition}
  We say that a solution $x(\cdot)$ of \eqref{general_system} is \emph{energetic} if there exists $\rho>0$ such that the energy $t\mapsto E_{\rho}(t):=\rho \int_0^t \Vert \dot{x}(\tau)\Vert^2 ds+\varphi(x(t))$ is nonincreasing, that is, 
       \begin{equation}\label{integral_prop}
           \rho \int_{s}^t \Vert \dot{x}(\tau)\Vert^2 d\tau \leq \varphi (x(s))-\varphi(x(t)) \textrm{ for all } 0\leq s<t.
       \end{equation}
        To further emphasize the role of the energy $E_{\rho}(t)$, whenever this condition holds with a constant $\rho>0$, we will say that $x(\cdot)$ is a $\rho$-energetic solution. 
\end{definition}

\noindent \textbf{Standing Assumptions:} Recall that we are interested in optimizing the function $\varphi$, defined as the sum of two functions $\varphi_1\colon \mathbb{R}^d \rightarrow \mathbb{R} \cup \{\infty\}$ and $\varphi_2\colon \mathbb{R}^d \rightarrow \mathbb{R}$. Let $x_0 \in \operatorname{dom} \varphi$, and suppose there exists an open set $\Omega \subset \mathbb{R}^d$ such that the following standing assumptions hold for $\varphi_1$, $\varphi_2$, and $F\colon \mathbb{R}^d \rightarrow \mathbb{R}^d$:
	\begin{enumerate}
		\item[\namedlabel{FixH1}{$(\mathcal{H}_1)$}]  The level set $\{ x\in \R^d : \varphi(x)\leq \varphi(x_0) \}$ is contained within $\Omega$.
		\item [\namedlabel{FixH2}{$(\mathcal{H}_2)$}] The function $\varphi_1$ is plr on $\Omega$, and $\varphi_2$ is locally Lipschitz on $\Omega$.
        \item [\namedlabel{FixH3}{$(\mathcal{H}_3)$}] The function $F$ is locally Lipschitz on  $\Omega$.
	\end{enumerate}

We now state our main result concerning the existence of energetic solutions to the dynamical system \eqref{general_system}. Its proof is provided in Appendix \ref{sec-ap-B}.
 \begin{theorem}\label{main_theorem_prox_reg} 
 Let $x_0\in \dom \varphi$, and assume that  \ref{FixH1}, \ref{FixH2}, and \ref{FixH3} hold. In addition, assume that the following two conditions are also satisfied:
 \begin{enumerate}
     \item [\namedlabel{FixH4}{$(a)$}] The graphical derivative of $F$ is $\rho$-lower-definite on $\Omega$ for some $\rho>0$.
        \item[\namedlabel{FixH5}{$(b)$}] The function $\varphi$ satisfies a linear growth bound from below on $\Omega$, i.e., there is $\alpha>0$ such that $\varphi(x)\geq -\alpha(\|x\|+1)$ for all $x\in \Omega$.
 \end{enumerate}
  Then, there exists a $\rho$-energetic solution $x(\cdot)$ of the Newton-like dynamical system \eqref{general_system}, where $\rho$ is the constant given in \ref{FixH4}. Moreover, the map $t\mapsto \varphi(x(t))$ is absolutely continuous on any compact interval of $\mathbb{R}_+$. Finally, the above solution is unique if one of the following conditions holds:
 \begin{enumerate}
     \item[\namedlabel{Uniq1}{$(i)$}] $F$ is linear, and $\varphi_2=0$. 
     \item[\namedlabel{Uniq2}{$(ii)$}] The function $\varphi_1\in \mathcal{C}^{1,+}$ on $\Omega$, and $\varphi_2=0$.
 \end{enumerate}
	\end{theorem}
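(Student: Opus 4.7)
The plan is to build an energetic solution by an implicit time-discretization scheme, exploit the $\rho$-lower-definiteness of $DF$ both to solve each discrete step and to produce a per-step energy decrease, and then pass to the continuous limit by compactness; uniqueness in cases \ref{Uniq1}--\ref{Uniq2} will then follow from a Gr\"onwall argument once the Newton-like inclusion is reduced to a benign right-hand side. Concretely, for each $h>0$ I would define iterates $\{x_k^h\}\subset \Omega$ by a catching-up scheme of the form $0\in \partial\varphi_1(x_{k+1}^h)+\partial\varphi_2(x_k^h)+DF(x_k^h)\bigl((x_{k+1}^h-x_k^h)/h\bigr)$. Solvability at each step rests on the plr inequality of $\varphi_1$ together with the coercivity supplied by the $\rho$-lower-definite $DF$; testing with the discrete velocity $v_k^h:=(x_{k+1}^h-x_k^h)/h$ and using the plr descent inequality for $\varphi_1$ produces a telescoping discrete energy estimate of the form $\varphi(x_{k+1}^h)+\rho h\|v_k^h\|^2\leq \varphi(x_k^h)+\text{(corrections vanishing with } h)$.

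From the discrete energy estimate and the linear growth bound \ref{FixH5}, iterates remain in the sublevel set of $\varphi$, hence in $\Omega$ by \ref{FixH1}, while the discrete velocities are uniformly bounded in $\ell^2$ on compact time windows. Piecewise-linear interpolants $x^h(\cdot)$ become equi-absolutely continuous, and Arzel\`a--Ascoli plus weak $L^2_{\operatorname{loc}}$ compactness of $\dot x^h$ produce a limit $x$ with $x^h\to x$ locally uniformly and $\dot x^h\rightharpoonup \dot x$. A measurable-selection step extracts integrable selections $v(t)\in\partial\varphi_1(x(t))$ and $w(t)\in\partial\varphi_2(x(t))$, and the outer semicontinuity of the Clarke subdifferentials together with the graphical closedness of the tangent cone $T_{\gph F}$ (using the local Lipschitz property of $F$ from \ref{FixH3}) allow passing to the limit to recover $v(t)+w(t)\in -DF(x(t))(\dot x(t))$ for a.e.\ $t$. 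Combining this inclusion with the chain rule valid for plr $\varphi_1$ plus locally Lipschitz $\varphi_2$ along absolutely continuous curves, and with the $\rho$-lower-definiteness of $DF$, yields $\tfrac{d}{dt}\varphi(x(t))\leq -\rho\|\dot x(t)\|^2$, which simultaneously delivers the absolute continuity of $t\mapsto\varphi(x(t))$ and the energy inequality \eqref{integral_prop}.

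For uniqueness, case \ref{Uniq1} reduces the system to the preconditioned subgradient flow $A\dot x(t)\in -\partial\varphi_1(x(t))$ of a plr function, where $A$ is linear with $\langle Av,v\rangle\geq \rho\|v\|^2$; I would compare two solutions in the $A$-induced energy $\langle A(x_1-x_2),x_1-x_2\rangle$, invoke the hypomonotonicity estimate for $\partial\varphi_1$ inherited from the plr inequality on the bounded trajectory, and close the argument by Gr\"onwall. Case \ref{Uniq2} is more direct: since $\nabla\varphi_1$ is locally Lipschitz, the $\rho$-lower-definite inclusion $\nabla\varphi_1(x(t))\in-DF(x(t))(\dot x(t))$ identifies the Newton direction via an implicit selection with a locally Lipschitz dependence in $x$ (using the coercive lower-definite inversion), after which uniqueness is standard Gr\"onwall applied to $\|x_1-x_2\|$.

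The hardest step of the plan is the limit passage in the set-valued inclusion $-v^h-w^h\in DF(x^h)(\dot x^h)$: the velocities converge only weakly in $L^2_{\operatorname{loc}}$, yet $DF(x)(\cdot)$ depends on the base point $x$ only in an outer-semicontinuous way through the tangent-cone geometry of $\gph F$. Handling this requires a careful Lebesgue-point / Mazur's lemma argument to promote weak into strong convergence of convex combinations of $\dot x^h$, together with a localization ensuring that the limit pair $(\dot x(t), -v(t)-w(t))$ lies in $T_{\gph F}(x(t),F(x(t)))$ for a.e.\ $t$.
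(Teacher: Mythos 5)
Your plan is a time-discretization (implicit Euler / catching-up) scheme, which is genuinely different from what the paper does: the paper replaces $\varphi_1$ by Moreau envelopes $e_\lambda\bar\varphi_1$ and $\varphi_2$ by mollifications $\varphi_2^\lambda$, solves the resulting $\mathcal{C}^{1,+}$ problem exactly via Lemma~\ref{lemma_local_solution} and Zorn's lemma (Proposition~\ref{Prop_Maximal}), and then lets $\lambda\downarrow 0$. I do not think your route closes, for two reasons.

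First, per-step solvability is not justified. The set-valued map $v\mapsto DF(x_k^h)(v)$ is lower-definite in the sense of $\langle d,v\rangle\geq\rho\|v\|^2$, but for merely locally Lipschitz $F$ it need not be monotone in $v$ nor convex-valued, so the discrete inclusion $0\in \partial\varphi_1(y)+\partial\varphi_2(x_k^h)+DF(x_k^h)((y-x_k^h)/h)$ is not a monotone or variational problem for which existence of $y$ is standard; nor does the plr inequality of $\varphi_1$ supply it. The paper avoids this entirely: after smoothing, each step is an ordinary ODE in integral form, and existence follows from Banach's contraction principle via the strong metric regularity of $F$ (Proposition~\ref{prop-s-m-r}).

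Second, and more fundamentally, the limit passage you flag as the hardest step does not go through as described. The paper's crucial device is Lemma~\ref{lemma_cotingent_lips}: along an absolutely continuous curve $x(\cdot)$ with $F$ locally Lipschitz, $DF(x(t))(\dot x(t))=\{\tfrac{d}{dt}(F\circ x)(t)\}$ a.e., so the inclusion is equivalent to the integral identity $F(x(t))=F(x_0)-\int_0^t\bigl(v(s)+w(s)\bigr)\,ds$ (Proposition~\ref{equivalence_prop}). That integral form is what survives the combination of uniform convergence of $x_\lambda$ and weak $L^2$ convergence of $v_\lambda,w_\lambda$; this is precisely how Claim~4 of the proof closes. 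Your discrete scheme evaluates $DF$ at a single point $x_k^h$ rather than along a curve, so the chain rule gives you nothing and there is no analogous integral representation; and $T_{\gph F}(x,F(x))$ is a closed but generally nonconvex cone, so Mazur's lemma applied to convex combinations of $\dot x^h$ cannot force the weak limit pair into the tangent cone. Without the integral reformulation, this step is not merely delicate but genuinely open. The same issue resurfaces in your uniqueness argument for case~\ref{Uniq2}: ``inverting'' the lower-definite $DF(x)(\cdot)$ to get a Lipschitz selection is not standard, whereas the paper reduces again to the integral equation and invokes the contraction of Lemma~\ref{lemma_local_solution}. Your treatment of case~\ref{Uniq1} (weighted Gr\"onwall with $\langle A(x_1-x_2),x_1-x_2\rangle$ and hypomonotonicity of $\partial\varphi_1$) is essentially the paper's, and is fine.
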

The uniqueness of solutions for \eqref{general_system} is a delicate matter that depends on the monotonicity properties of the system. The following example illustrates the nonuniqueness of solutions for the Newton-like dynamical system \eqref{general_system}. 

\begin{example} Consider $\Omega = \R$, $F(x) = x$, $\varphi_1\equiv 0$, $\varphi_2(x) = \min\{ |x-1|,|x+1| \}$, and $x_0 = 0$. It is clear that all the assumptions of Theorem \ref{main_theorem_prox_reg} are satisfied. However, both $x_1(t)=-\min\{t,1\}$ and $x_2(t)=\min\{t,1\}$ solve \eqref{general_system}.
\end{example}
\section{Asymptotic Analysis of Trajectories}
In this section, we study the asymptotic behavior of the trajectories of the Newton-like dynamical system \eqref{general_system}. We demonstrate that the set of accumulation points of any bounded trajectory of the Newton-like dynamical system is nonempty, compact, and connected. Furthermore, if the trajectory is energetic, then its accumulation points are stationary in a suitable sense for the optimization problem \eqref{mainProblem}. We then establish the convergence of the trajectories, providing a quantified rate of convergence under conditions of strong metric regularity and the Kurdyka-{\L}ojasiewicz inequality. 

To study the optimization problem \eqref{mainProblem}, we consider the following notion of a stationary point.
\begin{definition}
A point $x^{\ast}\in \mathbb{R}^d$ will be called a \emph{stationary point} of the optimization problem \eqref{mainProblem} if $ 0\in\partial\varphi_1(x^\ast) + \partial\varphi_2(x^\ast)$. The set of stationary points will be denoted by $\mathcal{S}$.
\end{definition}

Given a trajectory $x\colon \mathbb{R}_+\to \R^d$, the $\omega$-\emph{limit} of $x(\cdot)$, also called the  \emph{set of accumulation points} of $x(\cdot)$, is defined as  
$$
\omega(x):=\{z\in \mathbb{R}^d : \exists (t_n)\subset \mathbb{R}_+ \textrm{ with } t_n\to \infty \textrm{ and } x(t_n)\to z \textrm{ as } n\to \infty\}.
$$
The next result relates the set of accumulation points of the Newton-like dynamical system \eqref{general_system} to the stationary points of the minimization problem \eqref{mainProblem}. The proof is given in Appendix \ref{theorem_asym_behavior}.
 \begin{theorem}\label{asym_behavior} Assume that \ref{FixH1}, \ref{FixH2} and \ref{FixH3} hold. Let $x(\cdot)$ be a  solution of the Newton-like dynamical system \eqref{general_system}. Then the following hold:
		\begin{enumerate}
			\item [$(a)$] If $x(\cdot)$ is bounded, then the set $\omega(x)$ is nonempty, compact, and connected.
		\item [$(b)$] If $x(\cdot)$ is an energetic solution, then $\omega(x)\subset \mathcal{S}$. Moreover, if $x^\ast\in \omega(x)$, there exists a sequence $s_n \to \infty $ such that $x(s_n) \to x^\ast$, $\dot{x}(s_n) \to 0$, $\varphi(x(s_n)) \to \varphi(x^\ast)$, and 
   $$
 \operatorname{dist}(0;\partial \varphi_1 (x(s_n)) + \partial \varphi_2 (x(s_n))) \to 0 \textrm{ as } n\to \infty.
   $$
  In addition, if $x^{\ast}$ is an isolated point of $\omega(x)$, then  $x(t)\to x^{\ast}$ as $t\to \infty$.
		\end{enumerate}
	\end{theorem}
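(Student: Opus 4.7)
My plan is to treat parts (a) and (b) separately, handling (a) by classical continuum arguments for $\omega$-limit sets of continuous trajectories, and (b) by exploiting the energy estimate \eqref{integral_prop} to get $\dot x \in L^2(\R_+;\R^d)$, then combining this with the local Lipschitz bound on the graphical derivative of $F$ and the closed-graph properties of the Clarke subdifferentials of $\varphi_1$ and $\varphi_2$.

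\textbf{Part (a).} Since $x(\cdot)$ is bounded and (absolutely) continuous, I would write
\[
\omega(x)=\bigcap_{T\geq 0}\overline{x([T,\infty))}.
\]
Nonemptiness is immediate: any sequence $t_n\to\infty$ yields a bounded sequence $x(t_n)$, whose accumulation points lie in $\omega(x)$. Compactness follows because each $\overline{x([T,\infty))}$ is closed, bounded, and their decreasing intersection is a closed bounded subset of $\R^d$. For connectedness, each $x([T,\infty))$ is connected (continuous image of the connected set $[T,\infty)$), so its closure is connected; the decreasing intersection of a nested family of compact connected sets in $\R^d$ is connected, and this yields the claim.

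\textbf{Part (b), inclusion $\omega(x)\subset \mathcal{S}$.} The key ingredient is \eqref{integral_prop}, which for $s=0$ gives $\varphi(x(t))\leq\varphi(x_0)$, so the trajectory stays in the sublevel set of $\varphi$ that by \ref{FixH1} sits inside $\Omega$; together with lsc of $\varphi_1$ and continuity of $\varphi_2$ and the fact that $x^{\ast}\in\omega(x)$ is a limit point (hence locally bounded), one obtains a finite lower bound for $\varphi$ along the trajectory, hence $\rho\int_0^{\infty}\|\dot x(\tau)\|^2\,d\tau<\infty$. Given $x^{\ast}\in\omega(x)$ with $x(t_n)\to x^{\ast}$, I would then select, using a standard measure argument, $s_n\in[t_n,t_n+1]$ belonging to the full-measure set where the differential inclusion in \eqref{general_system} is satisfied and with $\|\dot x(s_n)\|^2\leq 2\int_{t_n}^{t_n+1}\|\dot x(\tau)\|^2\,d\tau\to 0$; the Cauchy--Schwarz bound $\|x(s_n)-x(t_n)\|\leq (\int_{t_n}^{t_n+1}\|\dot x\|^2)^{1/2}\to 0$ ensures $x(s_n)\to x^{\ast}$ as well. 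Now using \ref{FixH3}, the locally Lipschitz property of $F$ gives a bound of the form $\|DF(x(s_n))(\dot x(s_n))\|\leq L\|\dot x(s_n)\|$ in a neighbourhood of $x^{\ast}$, so picking $v(s_n)\in\partial\varphi_1(x(s_n))$, $w(s_n)\in\partial\varphi_2(x(s_n))$ with $v(s_n)+w(s_n)\in -DF(x(s_n))(\dot x(s_n))$ yields $\|v(s_n)+w(s_n)\|\to 0$, which already gives the distance statement $\operatorname{dist}(0;\partial\varphi_1(x(s_n))+\partial\varphi_2(x(s_n)))\to 0$. Since $\varphi_2$ is locally Lipschitz, $\{w(s_n)\}$ is bounded, so up to a subsequence $v(s_n)\to v^{\ast}$ and $w(s_n)\to w^{\ast}$ with $v^{\ast}+w^{\ast}=0$; using the plr inequality of $\varphi_1$ at $x^{\ast}$ with test point $y=x^{\ast}$ I get $\varphi_1(x^{\ast})\geq \varphi_1(x(s_n))+\langle v(s_n),x^{\ast}-x(s_n)\rangle-c(1+\|v(s_n)\|)\|x^{\ast}-x(s_n)\|^2$, whose limsup combined with lsc produces $\varphi_1(x(s_n))\to \varphi_1(x^{\ast})$, whence the closed-graph property of the Clarke subdifferential gives $v^{\ast}\in\partial\varphi_1(x^{\ast})$; analogously $w^{\ast}\in\partial\varphi_2(x^{\ast})$. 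Adding gives $0\in\partial\varphi_1(x^{\ast})+\partial\varphi_2(x^{\ast})$, i.e.\ $x^{\ast}\in\mathcal{S}$. Continuity of $\varphi_2$ at $x^{\ast}$ then also yields $\varphi(x(s_n))\to\varphi(x^{\ast})$.

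\textbf{Isolated points and main obstacle.} For the last claim, I would argue by contradiction: if $x^{\ast}$ is isolated in $\omega(x)$ but $x(t)\not\to x^{\ast}$, pick $R>0$ with $\overline{\mathbb{B}}_{2R}(x^{\ast})\cap\omega(x)=\{x^{\ast}\}$ and $r\in(0,R)$ so that $\|x(s_n)-x^{\ast}\|<r/2$ along one subsequence and $\|x(t_n)-x^{\ast}\|\geq r$ along another; continuity of $x(\cdot)$ yields times $\tau_n$ with $\|x(\tau_n)-x^{\ast}\|=r$, and any accumulation point of $\{x(\tau_n)\}$ lies in $\omega(x)\cap\overline{\mathbb{B}}_{2R}(x^{\ast})\setminus\{x^{\ast}\}$, a contradiction. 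The step that I expect to be the most delicate is the transfer of the subdifferential inclusion to the limit: ensuring simultaneously that $s_n$ sits in the full-measure set where \eqref{general_system} holds, that $\dot x(s_n)\to 0$, and that $\varphi_1(x(s_n))\to\varphi_1(x^{\ast})$, for which the plr inequality is the crucial tool since lsc alone only gives one of the two inequalities.
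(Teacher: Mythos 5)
Your proposal is correct, and for part (a) and the core of part (b) it follows essentially the paper's route: energy decay \eqref{integral_prop} plus lower semicontinuity at the limit point gives $\dot{x}\in L^2(\R_+;\R^d)$; times $s_n$ are then selected in the full-measure set where \eqref{general_system} holds with $\dot{x}(s_n)\to 0$; the local Lipschitz continuity of $F$ bounds every element of $DF(x(s_n))(\dot{x}(s_n))$ by $L\|\dot{x}(s_n)\|$, giving the distance estimate; and the plr property supplies the $\varphi_1$-attentive convergence needed to pass the subgradients to the limit. Two differences are worth noting. First, your selection of $s_n$ by a Chebyshev/mean-value argument on the unit intervals $[t_n,t_n+1]$ is simpler than the paper's construction (shrinking intervals $\mathcal{I}_k$ around a sparsified subsequence and a contradiction with $\dot x\in L^2$), and your derivation of $\varphi_1(x(s_n))\to\varphi_1(x^\ast)$ directly from the plr inequality with $y=x^\ast$ (using boundedness of $v(s_n)$) replaces the paper's citation of an external corollary on plr functions; both shortcuts are sound, and the same plr inequality with a fixed test point $y$ also justifies cleanly that the limit $v^\ast$ is a (proximal, hence Clarke) subgradient at $x^\ast$, which is a safer route than invoking closedness of the Clarke subdifferential for a general lsc function. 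Second, your treatment of the isolated-point claim is genuinely different: the paper reduces it to boundedness of the whole trajectory (via part (a) and a cited lemma) and then proves boundedness by a contradiction argument using exit times and the $L^2$ tail of $\dot{x}$, whereas you argue directly that non-convergence forces infinitely many crossings of a small sphere around $x^\ast$ at times tending to infinity, whose accumulation points (the sphere being compact in $\R^d$) are $\omega$-limit points distinct from but arbitrarily close in radius to $x^\ast$, contradicting isolatedness. Your argument is purely topological, needs neither energeticity nor the $L^2$ estimate at that stage, and avoids the external lemma — a more elementary and self-contained proof of that final assertion. The only caveat, shared with the paper's own proof, is the implicit use of $x^\ast\in\Omega$ (so that \ref{FixH2} and \ref{FixH3} apply at $x^\ast$), which follows from \ref{FixH1} together with lower semicontinuity of $\varphi$ along the trajectory.
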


 \subsection{Convergence of Energetic Solutions Under Strong Metric Subregularity}

In this section, we establish the convergence of energetic solutions of the Newton-like dynamical system \eqref{general_system} under the strong metric subregularity of  the subdifferential of the objective function $\varphi$. The proof is presented in Appendix \ref{Proofofrateundermetric_reg}.

\begin{theorem}\label{rateundermetric_reg}
Assume that \ref{FixH1}, \ref{FixH2} and \ref{FixH3} hold. Let $x(\cdot)$ be an energetic solution of \eqref{general_system},  and suppose that $x^{\ast}\in \omega(x)$ is such that the mapping $\partial\varphi_1+\partial\varphi_2$ is strongly metrically subregular at $(x^{\ast},0)$ with modulus $\kappa>0$. Then $x(t)\to x^{\ast}$ as $t\to \infty$, and there exist $\sigma, T, \nu>0$ such that 
\begin{align}
    \sigma	\int_s^t \| x(\tau) - x^\ast\|^2\,d\tau &\leq 	\varphi(x(s))-\varphi(x(t)) & \textrm{ for all } T\leq s\leq t,\label{eq02}\\
		\min_{s\in [T,t]} \| x(s) - x^\ast\| &\leq \frac{\nu}{\sqrt{t-T}} & \textrm{ for all } t>T.  \label{eq_trivial}
    \end{align}
If, in addition, one of the following conditions holds:
 \begin{enumerate}
     \item [$(i)$] The function $\varphi_1$ is weakly convex around $x^\ast$ and $\varphi_2=0$,
     \item [$(ii)$] $\varphi_1 = 0$ and $-\varphi_2 $ is weakly convex around $x^\ast$,
 \end{enumerate}
then there exist $\alpha, \beta, \gamma,\tau>0$ such that for all $t\geq \tau$, we have 
	\begin{equation}\label{eq01}
			\gamma \int_t^{\infty}   \operatorname{dist}^2(0;\partial \varphi(x(s)))\, ds \leq 	 \varphi(x(t))-\varphi(x^\ast)  \leq \alpha \exp(-\beta t).
	\end{equation}
 Moreover, in the case $(i)$, if the function $\varphi_1$ is $\sigma$-weakly convex with $\sigma<\kappa^{-1}$, then there exist $\hat \alpha,\hat  \beta, \hat \tau>0$ such that
 \begin{equation}\label{eqn-bound-varphi1}
     \|x(t)-x^\ast\|\leq \hat \alpha \exp(-\hat \beta t) \quad \text{ for all } t\geq \hat \tau.
 \end{equation}

\end{theorem}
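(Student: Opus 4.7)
The plan is to derive everything from the pointwise local estimate
\begin{equation*}
    \|x(t)-x^\ast\|\le \kappa L\|\dot x(t)\|,
\end{equation*}
valid whenever $x(t)$ lies in a fixed neighborhood $U$ of $x^\ast$, where $L$ is a local Lipschitz constant of $F$. This is obtained by combining the differential inclusion $v(t)+w(t)\in -DF(x(t))(\dot x(t))$ with the standard bound $\|DF(x)(z)\|\le L\|z\|$ for locally Lipschitz $F$, which yields $\|v(t)+w(t)\|\le L\|\dot x(t)\|$, and then applying strong metric subregularity $\|x(t)-x^\ast\|\le \kappa\|v(t)+w(t)\|$ (using $v(t)+w(t)\in\partial\varphi_1(x(t))+\partial\varphi_2(x(t))$).

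Squaring this estimate and integrating against the energetic property $\rho\int_s^t\|\dot x\|^2\,d\tau\le \varphi(x(s))-\varphi(x(t))$ gives \eqref{eq02} with $\sigma=\rho/(\kappa L)^2$, and \eqref{eq_trivial} then follows from the pigeonhole principle. For the convergence $x(t)\to x^\ast$ I would first trap the trajectory inside $U$. Using that $x^\ast\in\omega(x)$ supplies a sequence $t_n\to\infty$ with $x(t_n)\to x^\ast$ and that $\varphi(x(\cdot))$ is nonincreasing, pick $T$ large so that both $\|x(T)-x^\ast\|$ and $\varphi(x(T))-\varphi(x^\ast)$ are small; a maximal-interval argument based on
\begin{equation*}
    \|x(T^\prime)-x^\ast\|^2\le \|x(T)-x^\ast\|^2 + 2\Bigl(\int_T^{T^\prime}\|x-x^\ast\|^2\Bigr)^{1/2}\Bigl(\int_T^{T^\prime}\|\dot x\|^2\Bigr)^{1/2}
\end{equation*}
together with the integrated forms of \eqref{eq02} and the energy inequality then forbids $x$ from exiting $U$. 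Once $x([T,\infty))\subset U$, the same Cauchy–Schwarz bound applied to differences shows $t\mapsto \|x(t)-x^\ast\|^2$ is Cauchy at infinity, and the subsequential limit $0$ along $(t_n)$ forces the full limit.

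For the exponential rates under weak convexity I would linearize via the weak-convexity subgradient inequality at $x(t)$ with test point $x^\ast$. In case $(i)$ this gives $\varphi(x(t))-\varphi(x^\ast)\le \langle v(t), x(t)-x^\ast\rangle + \tfrac{\sigma}{2}\|x(t)-x^\ast\|^2$, and a symmetric bound handles case $(ii)$ using $0\in\partial\varphi_2(x^\ast)$. Substituting the pointwise bounds $\|v(t)+w(t)\|\le L\|\dot x(t)\|$ and $\|x(t)-x^\ast\|\le \kappa L\|\dot x(t)\|$ produces $\varphi(x(t))-\varphi(x^\ast)\le C\|\dot x(t)\|^2$ for an explicit $C$. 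Combining this with $\rho\|\dot x(t)\|^2\le -\tfrac{d}{dt}\varphi(x(t))$ a.e. (valid since $\varphi(x(\cdot))$ is AC by Theorem \ref{main_theorem_prox_reg}) yields the differential inequality $E'(t)\le -(\rho/C)E(t)$ for $E(t):=\varphi(x(t))-\varphi(x^\ast)$, hence the claimed exponential bound. The integral estimate in \eqref{eq01} then follows from $\operatorname{dist}(0;\partial\varphi(x(s)))\le \|v(s)+w(s)\|\le L\|\dot x(s)\|$ (Clarke sum rule in the plr-plus-Lipschitz setting) integrated against the energy inequality on $[t,\infty)$.

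For the sharper estimate \eqref{eqn-bound-varphi1}, I would use the convex shift $g:=\varphi_1+\tfrac{\sigma}{2}\|\cdot-x^\ast\|^2$, which is locally convex. A triangle-inequality computation shows that $\partial g$ inherits strong metric subregularity from $\partial\varphi_1$ with modulus $\kappa/(1-\kappa\sigma)$, whose positivity is exactly the hypothesis $\sigma<\kappa^{-1}$. For a locally convex function, strong metric subregularity of the subdifferential at the minimum is equivalent to quadratic growth $g(x)-g(x^\ast)\ge \mu\|x-x^\ast\|^2$, which after rearrangement yields $\varphi_1(x)-\varphi_1(x^\ast)\ge c\|x-x^\ast\|^2$ near $x^\ast$; combined with the exponential decay of $\varphi_1(x(t))-\varphi_1(x^\ast)$ already established, this gives \eqref{eqn-bound-varphi1}. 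The two main obstacles I anticipate are (i) the trapping/maximal-interval argument ensuring $x(t)\in U$ for all $t\ge T$, which requires a careful balance between the radius of $U$ and the smallness of $\varphi(x(T))-\varphi(x^\ast)$, and (ii) the quantitative link between strong metric subregularity and quadratic growth in the shift step, where the constant produced by the equivalence must dominate $\sigma/2$ in the rearrangement.
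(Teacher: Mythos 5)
Your proposal is substantively correct and, for the core estimates, follows the same blueprint as the paper: the pointwise bound $\|x(t)-x^\ast\|\le\kappa L\|\dot x(t)\|$ from local Lipschitzness of $F$ plus strong metric subregularity (the paper's Claim~2), squaring and integrating against the energy inequality for \eqref{eq02}, a pigeonhole step for \eqref{eq_trivial}, and linearizing via weak convexity and Gr\"onwall for \eqref{eq01}. The one structural departure is the convergence $x(t)\to x^\ast$. The paper's Claim~1 is a one-line argument: any $z\in\omega(x)\cap\mathbb{B}_\eta(x^\ast)$ is stationary by Theorem~\ref{asym_behavior}, so subregularity gives $\|z-x^\ast\|\le\kappa\operatorname{dist}(0;\partial\varphi_1(z)+\partial\varphi_2(z))=0$; hence $x^\ast$ is an isolated point of $\omega(x)$ and Theorem~\ref{asym_behavior}(b) yields convergence. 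Your trapping-plus-Cauchy route is self-contained (it avoids the isolated-point criterion, whose proof is a separate boundedness contradiction), at the cost of more bookkeeping. It does work, but as written your order is circular: \eqref{eq02} is invoked before you have trapped $x$ in the neighborhood $U$ where the pointwise bound holds. You should run the maximal-interval argument first, using the \emph{local} versions of the two integral inequalities on $[T,T')$, pick $T=t_n$ with both $\|x(T)-x^\ast\|$ and $\varphi(x(T))-\varphi(x^\ast)$ small (available since $\varphi\circ x$ is nonincreasing and $\varphi(x(t_n))\to\varphi(x^\ast)$ by Theorem~\ref{asym_behavior}), conclude $T'=\infty$, and only then state \eqref{eq02} globally.

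On \eqref{eqn-bound-varphi1}, the paper simply cites \cite[Corollary 3.3]{MR3331214} for quadratic growth of $\varphi_1$ under $\sigma<\kappa^{-1}$. Your convex-shift route ($g=\varphi_1+\tfrac{\sigma}{2}\|\cdot-x^\ast\|^2$, inherited subregularity modulus $\kappa/(1-\kappa\sigma)$, quadratic growth for $g$, subtract the shift) is the natural attack, but with the sharp convex equivalence (subregularity modulus $\kappa'$ gives growth constant $\tfrac{1}{2\kappa'}$) the rearranged constant is $\tfrac{1-2\kappa\sigma}{2\kappa}$, which is positive only for $\sigma<(2\kappa)^{-1}$ and so misses half the stated range. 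You correctly flag this as the delicate point; to recover $\sigma<\kappa^{-1}$ you need the version of the subregularity-to-quadratic-growth implication established directly in the weakly convex/prox-regular setting, as in the cited reference, rather than passing through the convex shift and the tight convex equivalence.
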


\begin{remark}
     It is worth noting that the exponential rate of convergence established in Theorem \ref{rateundermetric_reg} is indeed tight. For any strongly convex $\varphi\in \mathcal{C}^{1,+}$ function, the unique solution of \eqref{general_system} with $F(x) = \nabla \varphi(x)$ and any initial point $x_0$ satisfies
\[
\| \nabla \varphi(x(t)) \|^2 = e^{-2t} \| \nabla \varphi(x_0) \|^2.
\]
Thus, for all $t\geq 0$, one has
\[
\int_t^\infty \| \nabla \varphi(x(s)) \|^2 \, ds = \frac{1}{2} e^{-2t} \| \nabla \varphi(x_0) \|^2,
\]
which shows that \eqref{eq01} is tight. \\
Furthermore, consider the particular case where $\varphi(x) = \frac{1}{2} \| x \|^2$ and $F(x) = \nabla \varphi(x) = x$. In this case, the unique solution of  \eqref{general_system} is given by $x(t) = e^{-t} x_0$, which clearly converges to the solution of the optimization problem \eqref{mainProblem}. Moreover, it satisfies $\| x(t) - x^\ast \| = e^{-t} \| x_0 \|$, demonstrating that \eqref{eqn-bound-varphi1} is also tight. 
 \end{remark}
 
\subsection{Convergence of Solutions Under the Kurdyka-{\L}ojasiewicz Inequality}
Another theoretical framework for studying the convergence of dynamical systems in a quantified manner, which is based on hypotheses regarding the functional-analytic properties of functions, is the so-called Kurdyka-{\L}ojasiewicz property (see, e.g., \cite{MR2274510, MR3341671}). We say a function $\varphi$ satisfies the \emph{structured Kurdyka-{\L}ojasiewicz property} at $x^\ast$ if there exist $\eta>0$ and a continuous concave function $\psi\colon [0, \eta] \rightarrow[0, \infty)$ with $\psi(0)=0$ such that $\psi$ is $\mathcal{C}^1$-smooth on $(0, \eta)$ with a strictly positive derivative $\psi^{\prime}$, satisfying 
	\begin{equation}\label{KL}
		\psi^{\prime}(\varphi(x)-\varphi(x^\ast)) \operatorname{dist}(0 ; \partial \varphi_1(x)+\partial\varphi_2(x)) \geq 1
	\end{equation}
	for all $x \in \mathbb{B}_\eta(x^\ast)$ with $\varphi(x^\ast)<\varphi(x)<\varphi(x^\ast)+\eta$. 
    
    The following result establishes the convergence of energetic solutions of the dynamical system \eqref{general_system} under this property, which proof can be found in Appendix \ref{ProofOf_thm-KL}.
	\begin{theorem}\label{thm-KL}
		Assume that \ref{FixH1}, \ref{FixH2}, and \ref{FixH3} hold. Let $x(\cdot)$ be an energetic solution of \eqref{general_system}, and suppose that $x^{\ast}\in \omega(x)$ is such that $\varphi$ satisfies the structured Kurdyka-{\L}ojasiewicz property at $x^{\ast}$. Then, $\dot{x}\in L^1(\R_+;\R^{d})$, $x(t) \to x^\ast$, and  $\varphi(x(t)) \to \varphi(x^\ast) $ as $t\to \infty$.
	\end{theorem}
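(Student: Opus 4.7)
The plan is to adapt the classical Łojasiewicz length argument to the present nonsmooth Newton-like setting. Since $x(\cdot)$ is energetic, the map $t\mapsto\varphi(x(t))$ is absolutely continuous and nonincreasing, with $-\tfrac{d}{dt}\varphi(x(t))\geq \rho\|\dot{x}(t)\|^{2}$ a.e. Combining this with Theorem \ref{asym_behavior}$(b)$, which supplies a sequence $s_n\to\infty$ with $x(s_n)\to x^\ast$ and $\varphi(x(s_n))\to \varphi(x^\ast)$, monotonicity forces $\varphi(x(t))\downarrow \varphi(x^\ast)$ and $\varphi(x(t))\geq \varphi(x^\ast)$ for every $t$. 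If equality $\varphi(x(t_0))=\varphi(x^\ast)$ holds at some $t_0$, then \eqref{integral_prop} gives $\dot{x}\equiv 0$ on $[t_0,\infty)$, so $x(\cdot)$ is constant beyond $t_0$; since $x^\ast\in \omega(x)$, this constant must be $x^\ast$, and the conclusion is immediate. Therefore I may assume $\varphi(x(t))>\varphi(x^\ast)$ for every $t\geq 0$, and reduce the theorem to proving $\dot{x}\in L^{1}(\R_+;\R^{d})$, from which the convergence $x(t)\to x^\ast$ follows via the Cauchy criterion combined with the fact that any limit must sit in $\omega(x)$.

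The core of the proof is a differential inequality tailored to a small neighborhood of $x^\ast$. Fix $r\in(0,\eta)$ small enough so that $F$ is $L$-Lipschitz on $\mathbb{B}_r[x^\ast]$. A direct consequence of the definition of the graphical derivative and the local Lipschitz property of $F$ is that $\|d\|\leq L\|v\|$ for every $d\in DF(x)(v)$ and $x\in \mathbb{B}_r[x^\ast]$. Hence for a.e.\ $t$ with $x(t)\in \mathbb{B}_r[x^\ast]$, the selections $v(t)+w(t)\in -DF(x(t))(\dot{x}(t))$ from the dynamics yield $\operatorname{dist}(0;\partial\varphi_1(x(t))+\partial\varphi_2(x(t)))\leq \|v(t)+w(t)\|\leq L\|\dot{x}(t)\|$. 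Substituting into the structured KL inequality \eqref{KL} gives $\psi'(\varphi(x(t))-\varphi(x^\ast))\geq 1/(L\|\dot{x}(t)\|)$ whenever $\dot{x}(t)\neq 0$. Since $\psi$ is $\mathcal{C}^{1}$ on $(0,\eta)$ and $t\mapsto\varphi(x(t))-\varphi(x^\ast)\in(0,\eta)$ is absolutely continuous, the chain rule applies and furnishes the key estimate
\begin{equation*}
-\frac{d}{dt}\psi\bigl(\varphi(x(t))-\varphi(x^\ast)\bigr)=\psi'(\cdots)\Bigl(-\frac{d}{dt}\varphi(x(t))\Bigr)\geq \rho\,\psi'(\cdots)\|\dot{x}(t)\|^{2}\geq \frac{\rho}{L}\|\dot{x}(t)\|,
\end{equation*}
which also holds trivially on the set where $\dot{x}(t)=0$.

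Finally I close the argument by a standard capture/bootstrap step. Using the subsequence from Theorem \ref{asym_behavior}$(b)$, pick $n$ large enough so that $\|x(s_n)-x^\ast\|<r/2$, $\varphi(x(s_n))-\varphi(x^\ast)<\eta$, and $(L/\rho)\psi(\varphi(x(s_n))-\varphi(x^\ast))<r/2$. Define $T:=\sup\{t\geq s_n : x(\tau)\in \mathbb{B}_r(x^\ast)\textrm{ for all }\tau\in[s_n,t]\}$ and integrate the differential estimate on $[s_n,t]$ for any $t<T$ to obtain
\begin{equation*}
\int_{s_n}^{t}\|\dot{x}(\tau)\|\,d\tau\leq \frac{L}{\rho}\bigl[\psi(\varphi(x(s_n))-\varphi(x^\ast))-\psi(\varphi(x(t))-\varphi(x^\ast))\bigr]<\frac{r}{2}.
\end{equation*}
This yields $\|x(t)-x^\ast\|\leq \|x(t)-x(s_n)\|+\|x(s_n)-x^\ast\|<r$, so by continuity of $x(\cdot)$ we must have $T=\infty$. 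Consequently $\dot{x}\in L^{1}([s_n,\infty);\R^{d})$, hence $\dot{x}\in L^{1}(\R_+;\R^{d})$, and $x(t)$ converges as $t\to\infty$; letting $r\downarrow 0$ forces the limit to be $x^\ast$, and the convergence $\varphi(x(t))\to\varphi(x^\ast)$ follows from the already established monotonicity. The main obstacle I foresee is the capture/bootstrap step: one must jointly control the KL-driven length bound and the position of $x(t)$ relative to $x^\ast$, while ensuring that the selection bound $\operatorname{dist}(0;\partial\varphi_1+\partial\varphi_2)\leq L\|\dot{x}\|$ and the local Lipschitz constant of $F$ remain uniformly valid throughout the entire tail $[s_n,\infty)$.
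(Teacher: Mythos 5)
Your proposal is correct and follows essentially the same strategy as the paper: reduce to the case $\varphi(x(t))>\varphi(x^\ast)$, use the local Lipschitzness of $F$ together with the graphical-derivative inclusion to obtain $\operatorname{dist}(0;\partial\varphi_1(x(t))+\partial\varphi_2(x(t)))\leq L\|\dot{x}(t)\|$, combine this with the structured KL inequality and the energy inequality to derive $-\tfrac{d}{dt}\psi(\varphi(x(t))-\varphi(x^\ast))\geq \tfrac{\rho}{L}\|\dot{x}(t)\|$, and then close with a capture argument that keeps the trajectory inside a ball around $x^\ast$ while integrating the length estimate. The only cosmetic differences from the paper are the choice of thresholds ($r/2$ in place of $\varepsilon/3$) and the formulation of the exit time (as a $\sup$ of a "stay-in" interval rather than an $\inf$ of the escape time), neither of which affects the argument.
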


The following corollary establishes the convergence rates for the solution $x(\cdot)$ of the system \eqref{general_system}, assuming that the function $\psi$ satisfies a specific form of the structured 
 Kurdyka-{\L}ojasiewicz inequality \eqref{KL}. The proof is given in Appendix \ref{Proof_of_Coro-KL}.
 \begin{corollary}\label{Coro-KL}
     In the setting of Theorem \ref{thm-KL}, suppose that $\psi(t) = Mt^{1-\theta}$, where $M>0$ and $\theta\in [0,1[$. Then, there exists $T>0$ such that:
     \begin{enumerate}
         \item [(a)] If $\theta\in [0,1/2[$, $x(t)$ and $\varphi(x(t))$ converge to $x^\ast $ and $\varphi(x^\ast)$, respectively, in finite time.
         \item [(b)] If $\theta = 1/2$, there exist $\alpha,\beta>0$ such that $\varphi(x(t))-\varphi(x^\ast)\leq \beta\exp(-\alpha t)$ and $\|x(t)-x^\ast\|\leq \beta\exp(-\alpha t/2)$  for all $t\geq T$.
         \item [(c)] If $\theta\in ]1/2,1[$, there exist $\mathscr{G}>0$ such that $\|x(t)-x^\ast\|\leq \mathscr{G}t^{\frac{1-2\theta}{1-\theta}}$ and $\varphi(x(t))-\varphi(x^\ast)\leq \mathscr{G} t^{\frac{1}{1-2\theta}}$ for all $t \geq T$.
     \end{enumerate}
 \end{corollary}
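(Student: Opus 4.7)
The plan is to reduce the statement to a scalar differential inequality for the suboptimality gap $h(t):=\varphi(x(t))-\varphi(x^\ast)$, then integrate it in the three regimes of $\theta$. By Theorem \ref{thm-KL} we already know $x(t)\to x^\ast$, $h(t)\downarrow 0$, and $\dot x\in L^1$, so for all $t$ larger than some $T>0$ we may restrict to a compact neighborhood of $x^\ast$ on which both the structured KL inequality \eqref{KL} and a local Lipschitz constant $L>0$ of $F$ are available. Let $v(t)\in\partial\varphi_1(x(t))$ and $w(t)\in\partial\varphi_2(x(t))$ be the selections from the definition of a solution, so that $-(v(t)+w(t))\in DF(x(t))(\dot x(t))$. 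Every element of the graphical derivative of an $L$-Lipschitz mapping $F$ at a direction $\dot x(t)$ has norm at most $L\|\dot x(t)\|$, which yields $\|v(t)+w(t)\|\leq L\|\dot x(t)\|$, hence
\begin{equation*}
\operatorname{dist}(0;\partial\varphi_1(x(t))+\partial\varphi_2(x(t)))\leq L\,\|\dot x(t)\|.
\end{equation*}
On the other hand, the energetic inequality \eqref{integral_prop} differentiates to $-\dot h(t)\geq \rho\,\|\dot x(t)\|^{2}$ a.e.

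Plugging $\psi(t)=Mt^{1-\theta}$ into \eqref{KL} gives, for $t\geq T$,
\begin{equation*}
\operatorname{dist}(0;\partial\varphi_1(x(t))+\partial\varphi_2(x(t)))\geq \frac{1}{M(1-\theta)}\,h(t)^{\theta},
\end{equation*}
which, combined with the two bounds of the previous paragraph, yields the autonomous ODE inequality $\dot h(t)\leq -c\,h(t)^{2\theta}$ for a constant $c>0$ depending only on $\rho,L,M,\theta$. Analyzing it classically: in case $(a)$ with $\theta\in[0,1/2)$, one has $(h^{1-2\theta})'\leq -c(1-2\theta)<0$, so $h$ vanishes in finite time, and hence so does $\varphi(x(t))-\varphi(x^\ast)$; in case $(b)$ with $\theta=1/2$, the inequality reads $\dot h\leq -ch$ and gives $h(t)\leq h(T)e^{-c(t-T)}$; in case $(c)$ with $\theta\in(1/2,1)$, one has $(h^{1-2\theta})'\geq c(2\theta-1)>0$, which integrates to the polynomial rate $h(t)\leq [c(2\theta-1)(t-T)]^{1/(1-2\theta)}$.

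To transfer these rates to $\|x(t)-x^\ast\|$, I would use the standard KL length trick: combining $\psi'(h(t))\geq \operatorname{dist}(0;\partial\varphi_1(x(t))+\partial\varphi_2(x(t)))^{-1}\geq (L\|\dot x(t)\|)^{-1}$ with $-\dot h\geq \rho\|\dot x\|^{2}$ gives
\begin{equation*}
-\tfrac{d}{dt}\psi(h(t))=\psi'(h(t))(-\dot h(t))\geq \tfrac{\rho}{L}\,\|\dot x(t)\|,
\end{equation*}
and integrating from $t$ to infinity produces $\|x(t)-x^\ast\|\leq \int_t^{\infty}\|\dot x(s)\|\,ds\leq (LM/\rho)\,h(t)^{1-\theta}$. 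Substituting the three rates of $h(t)$ obtained above then yields the claimed bounds on $\|x(t)-x^\ast\|$, with $x(t)\equiv x^\ast$ on $[T',\infty)$ in case $(a)$. The main obstacle I foresee is the \emph{localization step}: one must argue that for all $t$ beyond some $T$, the trajectory stays simultaneously inside the KL neighborhood of $x^\ast$ and inside a compact set on which $F$ is $L$-Lipschitz, so that $c$ and $L$ are legitimate uniform constants. This bootstrapping is already furnished by Theorem \ref{thm-KL}, after which the rest of the argument is a routine ODE analysis parametrized by $\theta$.
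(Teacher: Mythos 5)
Your proposal is correct in substance and follows essentially the same route as the paper: you derive the scalar inequality $\dot h\leq -c\,h^{2\theta}$ from the KL inequality, the bound $\operatorname{dist}(0;\partial\varphi_1(x(t))+\partial\varphi_2(x(t)))\leq L\|\dot x(t)\|$ (the paper gets this via Lemma \ref{lemma_cotingent_lips} rather than a direct norm bound on the graphical derivative, but the two are equivalent here), and the energetic estimate $-\dot h\geq\rho\|\dot x\|^2$; you then integrate in the three regimes and transfer to the trajectory via the length estimate $\|x(t)-x^\ast\|\leq\tfrac{LM}{\rho}h(t)^{1-\theta}$, exactly as in the paper's \eqref{ineq-dif-KL} and \eqref{ineq-dif-KL-5}. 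Cases (a) and (b) coincide with the paper's argument. The only genuine divergence is in case (c): the paper introduces the tail length $\phi(s)=\int_s^\infty\|\dot x(\tau)\|\,d\tau$ and integrates the second differential inequality $-\phi'\geq c\,\phi^{\theta/(1-\theta)}$, while you substitute the rate $h(t)\lesssim t^{1/(1-2\theta)}$ into the length bound. Both devices produce the exponent $\tfrac{1-\theta}{1-2\theta}$ for $\|x(t)-x^\ast\|$, so be careful with your closing claim that substitution "yields the claimed bounds": the printed exponent in item (c) is $\tfrac{1-2\theta}{1-\theta}$, and $t^{\frac{1-\theta}{1-2\theta}}\leq \mathscr{G}\,t^{\frac{1-2\theta}{1-\theta}}$ for large $t$ only when $\theta\leq 2/3$; for $\theta\in\,]2/3,1[$ your bound (and, after correcting an apparent algebraic slip in the last display of the paper's Claim 3, the paper's own bound) is strictly weaker than the printed one, which matches the standard KL rate $t^{\frac{1-\theta}{1-2\theta}}$ and suggests the exponent in the statement is inverted rather than that your argument is missing an idea.
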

 
\appendix
\section{Technical results from variational analysis}
 
   The following result provides a characterization of plr functions in terms of hypomonotonicity (see, e.g., \cite[Theorem 25.7]{MR3108443}).

	\begin{theorem}
		Let $f\colon  \R^d \rightarrow \R\cup\{\infty\}$ be a function which is finite at $\bar{x} \in X$ and lsc near $\bar{x}$. Then $f$ is plr at $\bar{x}$ if and only if there exist $\varepsilon>0$ and $c\geq 0$ such that
			\begin{equation}\label{hypo_plr}
				\left\langle\zeta_1-\zeta_2, x_1-x_2\right\rangle \geq-c\left(1+\left\|\zeta_1\right\|+\left\|\zeta_2\right\|\right)\left\|x_1-x_2\right\|^2
			\end{equation}
			whenever $\zeta_i \in \partial f(x_i)$ and $\left\|x_i-\bar{x}\right\| \leq \varepsilon$ for $i=1,2$. 
	\end{theorem}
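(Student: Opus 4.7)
The characterization splits into two implications of very different flavor. For the \emph{forward direction} ($\Rightarrow$), assume $f$ is plr at $\bar{x}$ with constants $\varepsilon_0 > 0$ and $c_0 \geq 0$. Given $\zeta_i \in \partial f(x_i)$ with $\|x_i - \bar x\| \leq \varepsilon_0$, apply the plr subgradient inequality twice: at $(x_1,\zeta_1)$ with test point $x_2$, and at $(x_2,\zeta_2)$ with test point $x_1$. Adding the two estimates makes $f(x_1)$ and $f(x_2)$ cancel, yielding
\begin{equation*}
0 \geq \langle \zeta_1 - \zeta_2, x_2 - x_1\rangle \;-\; c_0 (2 + \|\zeta_1\| + \|\zeta_2\|)\|x_1 - x_2\|^2,
\end{equation*}
and \eqref{hypo_plr} follows with constant $2c_0$ via $2 + \|\zeta_1\| + \|\zeta_2\| \leq 2(1+\|\zeta_1\|+\|\zeta_2\|)$. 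This is a two-line computation that uses nothing beyond the very definition of plr.

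The \emph{reverse direction} ($\Leftarrow$) is the substantive one. Assume \eqref{hypo_plr} holds on $\mathbb{B}_{\varepsilon_0}(\bar x)$ with constant $c$, and let $\varepsilon \in (0, \varepsilon_0)$ (to be fixed below), $x \in \mathbb{B}_{\varepsilon}(\bar x) \cap \dom \partial f$, $\zeta \in \partial f(x)$, and $y \in \mathbb{B}_{\varepsilon}(\bar x)$. The goal is to establish the plr subgradient inequality $f(y) \geq f(x) + \langle \zeta, y - x\rangle - c'(1+\|\zeta\|)\|y - x\|^2$ for some $c'$ depending only on $c$. The plan is to argue by contradiction and invoke Zagrodny's approximate mean value inequality, applied to the lsc auxiliary function $\psi(z) := f(z) - \langle \zeta, z - x\rangle$ along the segment from $x$ to $y$. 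This produces a point $\bar z = x + t(y - x)$ with $t \in [0,1)$ together with sequences $z_k \to \bar z$, $f(z_k) \to f(\bar z)$, and $\xi_k \in \partial f(z_k)$ (so that $\xi_k - \zeta \in \partial \psi(z_k)$) whose directional inner products satisfy a lower bound scaled by $\psi(y) - \psi(x)$, which is strictly below $-c'(1+\|\zeta\|)\|y - x\|^2$ under the contradiction hypothesis. Concurrently, \eqref{hypo_plr} at the pair $(x, z_k)$ gives $\langle \xi_k - \zeta, z_k - x\rangle \geq -c(1 + \|\xi_k\| + \|\zeta\|)\|z_k - x\|^2$, with $\|z_k - x\|^2 \to t^2 \|y - x\|^2$. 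Combining the two estimates along the segment and choosing $c'$ sufficiently large relative to $c$ should produce the required contradiction, after which $c'$ depends only on $c$, as desired.

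The \emph{main obstacle} is handling the potentially unbounded norms $\|\xi_k\|$, against which Zagrodny's MVI provides no a priori control. The hypomonotonicity bound is linear in $\|\xi_k\|$, with the saving grace that its coefficient is $c\|z_k - x\|^2$, a geometrically small quantity (proportional to $t^2 \|y-x\|^2$ as $k\to\infty$). The standard remedy, as carried out in \cite[Theorem 25.7]{MR3108443}, is to iterate the MVI on finer subdivisions of the segment $[x,y]$, absorbing the $\|\xi_k\|$-contribution stage by stage, or equivalently to pass to the proximal regularization $f + \tfrac{\mu}{2}\|\cdot - x\|^2$ with $\mu = c'(1+\|\zeta\|)$ and exploit a local monotonicity of its subdifferential on the shrunk ball. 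The matching linear-in-$\|\zeta\|$ dependence appearing in both the plr inequality and in \eqref{hypo_plr} is precisely the structural feature that makes this rebalancing work; removing it would destroy the equivalence.
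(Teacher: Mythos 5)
The paper does not actually prove this theorem: it is stated in Appendix~A as a known characterization and imported directly from the literature, with the proof delegated to \cite[Theorem 25.7]{MR3108443}. So there is no in-paper argument to measure you against; the relevant comparison is whether your proposal stands on its own. Your forward direction does: writing the plr inequality at $(x_1,\zeta_1)$ with test point $x_2$ and at $(x_2,\zeta_2)$ with test point $x_1$, adding, and absorbing $2+\|\zeta_1\|+\|\zeta_2\|\leq 2(1+\|\zeta_1\|+\|\zeta_2\|)$ is complete and correct, and gives \eqref{hypo_plr} with constant $2c_0$ on the same ball.

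The reverse direction, however, is only a plan, and the step you yourself flag as the main obstacle is exactly the one you do not carry out. After invoking Zagrodny's approximate mean value inequality for $\psi(z)=f(z)-\langle\zeta,z-x\rangle$, the hypomonotonicity estimate at the pair $(x,z_k)$ contributes the term $c\,\|\xi_k\|\,\|z_k-x\|^2$, and nothing in the MVI bounds $\|\xi_k\|$; the factor $\|z_k-x\|^2\approx t^2\|y-x\|^2$ is small but fixed, so the product is not controlled. Your proposed remedies (iterating the MVI on finer subdivisions of the segment, or passing to the proximal regularization $f+\tfrac{\mu}{2}\|\cdot-x\|^2$ and using local monotonicity of its subdifferential) are indeed the standard routes, but you do not execute either one: the sentence ``choosing $c'$ sufficiently large relative to $c$ should produce the required contradiction'' is precisely the bookkeeping that constitutes the proof, including how $\varepsilon$ must shrink relative to $c$ and $\|\zeta\|$ and how the $\|\xi_k\|$-terms are absorbed stage by stage. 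As written, your argument for $(\Leftarrow)$ ultimately defers to \cite[Theorem 25.7]{MR3108443} --- which is legitimate, and is in effect what the paper itself does, but then the proposal is a proof by citation rather than a self-contained proof, and should be presented as such.
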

    The \emph{Moreau envelope} of parameter $\lambda>0$ of a proper function  $f\colon \R^d\to \R\cup\{\infty\}$ is defined by 
\begin{equation}\label{mor_env}
    e_\lambda f(x) := \inf_{z\in\R^d} (f(z) + \frac{1}{2\lambda}\|x-z\|^2).
\end{equation}
The set of points where infimum in \eqref{mor_env} is attained is denoted by $P_\lambda f(x)$.\\
   \noindent Now, we recall important properties concerning the Moreau envelope of a plr function (see \cite{MR3108443,MR2252239}), in particular, its local  differentiability.
 
 \begin{lemma}\label{envelope_lemma}
		Let us consider $h\colon \R^d\to\R\cup\{\infty\}$ an lsc function and $x_0\in\text{dom }h$. Assume $h$ is $c$-plr on an open convex set containing $\mathbb{B}_{s_0}[x_0]$ where $s_0 \in ]0, \frac{1}{2c}[$. Take any $(\bar x,\bar y)\in\gph\partial h$ with $\|\bar x- x_0\|<\frac{s_0}{18}$ and let
		\begin{equation*}
			\bar{h}(\cdot)=h(\cdot)+\delta\left(\cdot, \mathbb{B}_{s_0/2}\left[\bar x\right]\right)\text{, }  r_0 := \frac{s_0}{18}.
		\end{equation*}
		Then there exists some threshold $\lambda_0$ such that for any $\lambda \in]0, \lambda_0[$:
		\begin{enumerate}[label=(\alph*), ref=\ref{envelope_lemma}-(\alph*)]
			\item  
   $e_\lambda \bar{h}$ is $\CC^{1,+}$ on $\mathbb{B}_{r_0}(x_0)$.
			\item \label{env_b} $P_\lambda \bar{h}$ is nonempty, single-valued, and $k_0$-Lipschitz continuous on $\mathbb{B}_{r_0}(x_0)$ for
			$
			k_0:=1+\lambda_0+\frac{1}{1-2c(\lambda_0+s_0/2)}.
			$
			\item 
   $\nabla e_\lambda \bar{h}=\lambda^{-1}\left(I-P_\lambda \bar{h}\right)$ on $\mathbb{B}_{r_0}(x_0)$.
			\item 
   $P_\lambda \bar{h}(\mathbb{B}_{r_0}(x_0)) \subset \mathbb{B}_{(63/8)r_0}(x_0)\subset \mathbb{B}_{s_0/2}(x_0)$.
			\item \label{env_e}
   For all $x \in \mathbb{B}_{r_0}(x_0)$,  $\nabla e_\lambda \bar{h}(x) \in \partial h(P_\lambda \bar{h}(x))$.
			\item 
   $\nabla e_\lambda \bar{h}$ is $k_\lambda$-Lipschitz on $\mathbb{B}_{r_0}(x_0)$ for $k_\lambda =  \frac{1}{\lambda(1-2c(\lambda(1+\|\bar{y}\|)+s_0/2))}+\|\bar y\| $.  
			\item \label{env_g} Assume $x\in \mathbb{B}_{r_0}(x_0)$, then 
			\begin{equation*}
				\lim_{y\to x,\lambda\searrow 0}P_\lambda \bar{h}(y) =  x\text{, }\liminf_{y\to x, \lambda\searrow 0}e_\lambda\bar{h}(y)\geq h(x) \text{ and } \lim_{\lambda\searrow 0}e_\lambda\ov{h}(x) = h(x).
			\end{equation*}
		\end{enumerate}
	\end{lemma}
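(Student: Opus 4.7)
The plan is to deduce all seven statements from two inputs: the hypomonotonicity characterization of plr functions stated in the theorem immediately before the lemma, and the fact that adding the indicator of a closed convex set preserves hypomonotonicity of the subdifferential with the same constant. First I would verify that on $\mathbb{B}_{s_0/2}[\bar x]$, the truncated function $\bar h = h + \delta(\cdot, \mathbb{B}_{s_0/2}[\bar x])$ satisfies the estimate \eqref{hypo_plr} with the same constant $c$: the sum rule gives $\partial \bar h(x) \subset \partial h(x) + N_{\mathbb{B}_{s_0/2}[\bar x]}(x)$, and the normal cone contributions are monotone, so they only improve the lower bound, while the $h$-parts of any pair of subgradients lie in $\partial h$ at points in $\mathbb{B}_{s_0}[\bar x]$, where \eqref{hypo_plr} applies. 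Since $\bar h$ is lsc and bounded below on a compact domain, the Moreau problem \eqref{mor_env} admits minimizers for every $\lambda>0$ and every $x$.

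The key threshold $\lambda_0$ is chosen so that $2c(\lambda_0(1+\|\bar y\|) + s_0/2) < 1$, which is precisely what makes $\bar h(\cdot) + \tfrac{1}{2\lambda}\|\cdot - x\|^2$ locally strongly convex on $\mathbb{B}_{s_0/2}[\bar x]$ for $x \in \mathbb{B}_{r_0}(x_0)$. Taking two candidate minimizers $u_1, u_2$, subtracting the first-order conditions $\lambda^{-1}(x - u_i) \in \partial \bar h(u_i)$, and combining with the hypomonotonicity estimate absorbs the defect into the $1/\lambda$ quadratic and yields $u_1 = u_2$; applying the same scheme to two input points gives the Lipschitz constant $k_0$ of \ref{env_b}. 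The envelope formula (c) is the standard consequence of differentiating the value function of a convex perturbation, and the subgradient inclusion \ref{env_e} follows from combining (c) with the first-order condition, since $\lambda^{-1}(x - P_\lambda \bar h(x)) \in \partial \bar h(P_\lambda \bar h(x))$ and the normal-cone component vanishes once $P_\lambda \bar h(x)$ is interior to the constraint set.

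The quantitative containment (d) is the hinge and the place where the constant $63/8$ is locked in. I would compare $\bar h(P_\lambda \bar h(x)) + \tfrac{1}{2\lambda}\|P_\lambda \bar h(x) - x\|^2 \le \bar h(\bar x) + \tfrac{1}{2\lambda}\|\bar x - x\|^2$, invoke the plr lower bound on $\bar h(P_\lambda \bar h(x))$ with respect to a fixed reference subgradient at $\bar x$, and solve the resulting quadratic in $\|P_\lambda \bar h(x) - \bar x\|$; plugging in $\|x - x_0\| < r_0 = s_0/18$, $\|\bar x - x_0\| < s_0/18$, and $\lambda < \lambda_0$ produces the bound $(63/8) r_0$. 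Once this is in hand, $P_\lambda \bar h(x)$ lies strictly interior to $\mathbb{B}_{s_0/2}[\bar x]$, so the indicator is locally inactive and (a) (the $\mathcal{C}^{1,+}$ regularity of $e_\lambda \bar h$ on $\mathbb{B}_{r_0}(x_0)$) follows from (c) together with the Lipschitz estimate on $P_\lambda \bar h$.

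Finally, (f) is obtained from the formula in (c), with the sharper constant $k_\lambda$ arising when one tracks $\|\bar y\|$ through the hypomonotonicity estimate in the contractivity argument for $P_\lambda \bar h$. The limits in \ref{env_g} are classical: $e_\lambda \bar h(x) \le \bar h(x) = h(x)$ by feasibility of $x$, while lsc of $h$ combined with $P_\lambda \bar h(y) \to x$ (forced by local strong convexity of the proximal objective as $\lambda \searrow 0$) yields $\liminf e_\lambda \bar h(y) \ge h(x)$. The principal obstacle is bookkeeping, not mathematics: the qualitative content is the standard prox-regular Moreau calculus, and the real work lies in threading the explicit constants $s_0$, $r_0$, $\lambda_0$, $k_0$, $k_\lambda$, $63/8$ through the chain of inequalities without losing track of which parameters control which threshold.
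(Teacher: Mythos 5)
Your plan is viable, but it takes a genuinely different route from the paper: the paper does not re-derive the Moreau-envelope calculus at all. Items (a)--(e) are quoted directly from \cite[Proposition 2.8]{MR2252239} (the standard envelope theory for primal lower nice functions), item (f) is simply observed to follow from (b) and (c), and the only part actually argued is (g) --- by essentially the argument you sketch: the inequality $\bar h(x)\geq \bar h(P_\lambda\bar h(x))+\frac{1}{2\lambda}\|x-P_\lambda\bar h(x)\|^2$ together with (d) and boundedness below of $h$ on $\mathbb{B}_{s_0}(x_0)$ gives $\|x-P_\lambda\bar h(x)\|\leq\sqrt{2\lambda(h(x)-\inf_{\mathbb{B}_{s_0}(x_0)}h)}$, and then the $k_0$-Lipschitz continuity of $P_\lambda\bar h$ from \ref{env_b} plus lower semicontinuity of $h$ and $e_\lambda\bar h\leq \bar h$ yield the three limits. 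So your proposal amounts to reconstructing the proof of the cited proposition; what it buys is self-containedness and explicit tracking of the constants $\lambda_0,k_0,k_\lambda,63/8$, at the cost of redoing a known and delicate computation that the paper deliberately outsources.

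One step of your reconstruction does not hold up as written: the opening claim that $\bar h=h+\delta(\cdot,\mathbb{B}_{s_0/2}[\bar x])$ satisfies \eqref{hypo_plr} ``with the same constant $c$'' via the sum rule $\partial\bar h(x)\subset\partial h(x)+N_{\mathbb{B}_{s_0/2}[\bar x]}(x)$. For an lsc, possibly non-Lipschitz $h$ this inclusion needs a qualification condition at boundary points of the ball, and even granting the splitting, the hypomonotonicity estimate for $\bar h$ does not transfer with the same constant, because the weight $1+\|\zeta_1\|+\|\zeta_2\|$ in \eqref{hypo_plr} must involve the norms of the full subgradients of $\bar h$, which can be strictly smaller than the norms of their $h$-components (cancellation with the normal-cone part). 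The repair is standard and already implicit in your own outline: establish the localization (d) first, for \emph{arbitrary} minimizers --- the energy comparison with $\bar x$ uses only the plr inequality for $h$ at $(\bar x,\bar y)$ and feasibility of $\bar x$, not uniqueness --- so that every prox point lies in the interior of $\mathbb{B}_{s_0/2}[\bar x]$, where $\partial\bar h$ and $\partial h$ agree locally; then run the uniqueness/Lipschitz argument for (b), the identity (c), and the inclusion \ref{env_e} directly with \eqref{hypo_plr} for $h$ itself. With that reordering your constants and thresholds go through as you describe.
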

	
	\begin{proof}
		The statements \textit{(a)}, \textit{(b)}, \textit{(c)}, \textit{(d)} and \textit{(e)} are direct consequences of \cite[Proposition 2.8]{MR2252239}. The assertion \textit{(f)} is implied from \textit{(b)} and \textit{(c)}. Moreover, using \textit{(b)}, for $x\in \mathbb{B}_{r_0}(x_0)$ and $\lambda<\lambda_0$ we have 
		\begin{equation*}
			\bar{h}(x)\geq \bar{h}(P_\lambda\bar{h}(x)) + \frac{1}{2\lambda}\|x-P_\lambda\bar{h}(x)\|^2.
		\end{equation*}
		From \textit{(d)} it follows that $P_\lambda\bar{h}(x)\in \mathbb{B}_{s_0/2}[\bar x]$, thus
		\begin{equation*}
			h(x)\geq h(P_\lambda\bar{h}(x))+\frac{1}{2\lambda}\|x-P_\lambda\bar{h}(x)\|^2\geq \inf\{ h(z):z\in \mathbb{B}_{s_0}(x_0)\} + \frac{1}{2\lambda}\|x-P_\lambda\bar{h}(x)\|^2.
		\end{equation*}
		Then, $\|x-P_{\lambda}\bar{h}(x)\|\leq \sqrt{2\lambda(h(x)-\inf\{ h(z):z\in \mathbb{B}_{s_0}(x_0)\})}$. Now, take $y\in\mathbb{B}_{r_0}(x_0)$,  from  then using \textit{(b)} for $\lambda<\lambda_0$
		\begin{equation*}
				\|P_\lambda\bar{h}(y)-x\| \leq \|P_\lambda\bar{h}(y)-P_\lambda\bar{h}(x)\| + \|P_\lambda\bar{h}(x)-x\| \leq k_0\|y-x\| + \|P_\lambda\bar{h}(x)-x\|.
		\end{equation*}
		It follows that $P_\lambda\bar{h}(y)\to x$ when $y\to x$ and $\lambda\searrow 0$. We also note that for all $\lambda\in ]0,\lambda_0[$,
		\begin{equation*}
			\begin{aligned}
				&e_\lambda\bar{h}(y)= h(P_\lambda\bar{h}(y))+\frac{1}{2\lambda}\|y-P_\lambda\bar{h}(y)\|^2\geq h(P_\lambda\bar{h}(y))\\
				\implies & \liminf_{y\to x,\lambda\searrow 0} e_\lambda\bar{h}(y)\geq \liminf_{y\to x,\lambda\searrow 0} h(P_\lambda\bar{h}(y))\geq h(x),
			\end{aligned}
		\end{equation*}
		where we have used $h$ is lsc and the fact recently proved. Finally, noting that for all $x\in \mathbb{B}_{r_0}(x_0)$ and $\lambda\in ]0,\lambda_0[$, $e_\lambda\bar{h}(x)\leq h(x)$, we conclude that $\lim_{\lambda\searrow 0}e_\lambda\bar{h}(x) = h(x)$.
	\end{proof}
 
      We say that a set valued mapping $F\colon \R^d\tto\R^d$ is \textit{strongly monotone} around $(\bar{x}, \bar{v})$ if there exist $\rho >0$ and a neighborhood $U \times V$ of $(\bar{x}, \bar{v})$ such that for any pair $\left(u_1, v_1\right),\left(u_2, v_2\right) \in \operatorname{gph} F \cap(U \times V)$ we have the estimate
		\begin{equation}\label{eq_strong_mon}
			\left\langle v_1-v_2, u_1-u_2\right\rangle \geq \rho\left\|u_1-u_2\right\|^2 .
        \end{equation}
        We say that $F$ is locally strongly maximal monotone around $(\bar x,\bar y)$ if $F$ satisfies \eqref{eq_strong_mon} on some neighborhood $U\times V$ of $(\bar x,\bar y)$ and for every monotone mapping $S\colon \R^d\tto \R^d$ such that $\gph F\cap U\times V\subset \gph S$ one has $\gph F\cap U\times V = \gph S\cap (U\times V)$.
        
	For a set-valued mapping $F\colon \R^d\tto \R^d$,  we say that $F$ admits a \textit{single-valued localization} around $(\bar{x}, \bar{y})\in\gph F$ if there is a neighborhood $U \times V \subset \R^d\times \R^d$ of $(\bar{x}, \bar{y})$ such that the mapping $\widehat{F}\colon U \rightarrow V$ defined via  $\gph\widehat{F}:=\operatorname{gph} F \cap(U \times V)$ is single-valued on $U$ with $\dom \widehat{F}=U$. Furthermore, we say $F$ admits a \textit{Lipschitzian single-valued localization} around $(\bar{x}, \bar{y})$ if the mapping $\widehat{F}$ is Lipschitz continuous on $U$, equivalently, we can say $F^{-1}$ is \emph{strongly metrically regular} (see \cite[p. 185]{MR3288139}). 
 
  The next lemma shows that the graphical derivative of a strongly locally monotone operator is $\rho$-lower-definite, where $\rho$ is the  same modulus of strong local monotonicity. 
	\begin{lemma}\label{lem-smm-ld}
		Let $F\colon \R^d\tto \R^d$ a set-valued mapping such that $F$ is strongly locally monotone at $(\bar x , \bar y)\in\gph F$ with modulus $\rho>0$, then $DF(\bar x,\bar y)$ is $\rho$-lower-definite.
	\end{lemma}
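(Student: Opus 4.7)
The argument is a direct unpacking of the definitions. Let $(v,d)\in \gph DF(\bar x,\bar y)$, i.e.\ $(v,d)\in T_{\gph F}(\bar x,\bar y)$. By definition of the Bouligand tangent cone, there exist sequences $t_n\searrow 0$ and $(v_n,d_n)\to (v,d)$ such that $(\bar x+t_n v_n,\,\bar y+t_n d_n)\in \gph F$ for all $n\in\N$.

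Let $U\times V$ be a neighborhood of $(\bar x,\bar y)$ on which the strong monotonicity estimate \eqref{eq_strong_mon} holds with modulus $\rho$. Since $t_n\to 0$ and $(v_n,d_n)$ is bounded, eventually $(\bar x+t_n v_n,\,\bar y+t_n d_n)\in U\times V$. Applying \eqref{eq_strong_mon} to the two points $(\bar x+t_n v_n,\,\bar y+t_n d_n)$ and $(\bar x,\bar y)$ in $\gph F\cap(U\times V)$ gives
\begin{equation*}
\langle (\bar y+t_n d_n)-\bar y,\,(\bar x+t_n v_n)-\bar x\rangle \geq \rho \,\|(\bar x+t_n v_n)-\bar x\|^2,
\end{equation*}
which simplifies to $t_n^2\langle d_n,v_n\rangle \geq \rho\, t_n^2\|v_n\|^2$. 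Dividing by $t_n^2>0$ and then passing to the limit as $n\to\infty$, using the continuity of the inner product and norm, yields $\langle d,v\rangle \geq \rho\|v\|^2$. Since $(v,d)$ was arbitrary in $\gph DF(\bar x,\bar y)$, the mapping $DF(\bar x,\bar y)$ is $\rho$-lower-definite.

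No serious obstacle is expected: the only subtlety is the bookkeeping that justifies placing the perturbed pair inside $U\times V$ for large $n$, which follows because $(t_n v_n, t_n d_n)\to 0$. No additional properties of $F$ (closedness of graph, single-valuedness, etc.) are needed; the tangent cone construction already delivers points in $\gph F$ on which the strong monotonicity inequality may be applied directly.
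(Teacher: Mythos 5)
Your proof is correct and follows essentially the same route as the paper's: pick an arbitrary $(v,d)$ in $\gph DF(\bar x,\bar y)$, realize it via the tangent-cone sequences $t_n\searrow 0$, $(v_n,d_n)\to(v,d)$ with $(\bar x,\bar y)+t_n(v_n,d_n)\in\gph F$, apply the strong monotonicity inequality to this perturbed pair against the base point $(\bar x,\bar y)$, cancel $t_n^2$, and pass to the limit. The bookkeeping you flag (that the perturbed pair lands in $U\times V$ for large $n$) is exactly the ``for all $k$ big enough'' observation in the paper's proof.
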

	
	\begin{proof}
		We have there is a neighborhood $U\times V$ of $(\bar x , \bar y)$ such that \eqref{eq_strong_mon} is satisfied on $\gph F\cap (U\times V)$. Take any $d\in DF(\bar x , \bar y)(z)$ for $z\in \R^d$, then, $(z,d)\in T_{\gph F}(\bar x , \bar y)$, it implies that there are sequences $(s_k)\subset \R_+$ and $(z_k,d_k)$ such that $s_k\searrow 0$, $(z_k,d_k)\to (z,d)$ and $(\bar x , \bar y) + s_k(z_k,d_k)\in \gph F$. Then, for all $k\in\N$ big enough, we have 
		\begin{equation*}
			\langle (\bar y + s_k d_k) - \bar v,(\bar x + s_k z_k) - \bar x \rangle \geq \rho\|\bar x + s_k z_k - \bar x\|^2
		\end{equation*}
		it follows that $\langle d_k,z_k\rangle\geq \rho\|z_k\|^2$, then taking the limit when $k\to \infty$, we get $\langle d,z\rangle\geq \rho\|z\|^2$, so we conclude that $DF(\bar x,\bar y)$ is $\rho$-lower-definite.
	\end{proof} 

 The following result (see \cite[Theorem 5.3.2]{MR2458436}) is a chain rule for the composition of a vector-valued Lipschitz function and an absolutely continuous function, expressed using the graphical derivative.
\begin{lemma}\label{lemma_cotingent_lips}
    Let $F\colon \R^d\to \R^d$ be a map which is locally Lipschitz on the open set $U\subset \R^d$ and $x\colon \R_+\to \R^d$ be an absolutely continuous function such that $x(\R_+)\subset U$. Then, we have that for a.e. $t\in \R_+$  
    \begin{equation*}
        DF(x(t))(\dot{x}(t)) = \left\{ \frac{d}{dt}(F\circ x)(t) \right\}.    
    \end{equation*}
\end{lemma}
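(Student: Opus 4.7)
The plan is to show both inclusions $\supseteq$ and $\subseteq$ at a.e.\ $t\in\R_+$. First I would fix a compact interval $[0,T]$: since $x$ is absolutely continuous, $x([0,T])$ is compact and contained in $U$, so there is an open neighborhood $V$ of $x([0,T])$ on which $F$ is Lipschitz with some constant $L>0$. This makes $F\circ x$ absolutely continuous on $[0,T]$, hence differentiable almost everywhere. I would then restrict attention to the full-measure set $\mathcal{T}\subset \R_+$ of points $t$ at which both $\dot x(t)$ and $(F\circ x)'(t)$ exist, and fix $t\in\mathcal{T}$, writing $v=\dot x(t)$ and $w=(F\circ x)'(t)$.

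For the inclusion $w\in DF(x(t))(v)$, the idea is to test the tangent cone $T_{\gph F}(x(t),F(x(t)))$ directly along the curve. Choose $s_k\searrow 0$ and set $v_k:=(x(t+s_k)-x(t))/s_k$, $d_k:=(F(x(t+s_k))-F(x(t)))/s_k$. By the definition of $\dot x(t)$ and $(F\circ x)'(t)$, $(v_k,d_k)\to(v,w)$, and by construction $(x(t),F(x(t)))+s_k(v_k,d_k)\in\gph F$. This yields $(v,w)\in T_{\gph F}(x(t),F(x(t)))$, giving $w\in DF(x(t))(v)$.

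For the reverse inclusion, let $d\in DF(x(t))(v)$ with approximating sequences $s_k\searrow 0$ and $(v_k,d_k)\to(v,d)$ satisfying $F(x(t)+s_k v_k)=F(x(t))+s_k d_k$. The key computation compares this perturbation with the actual curve: setting $\tilde v_k:=(x(t+s_k)-x(t))/s_k$, for large $k$ both $x(t)+s_k v_k$ and $x(t+s_k)$ lie in $V$, so by $L$-Lipschitzianity of $F$,
\begin{equation*}
\left\| d_k - \frac{F(x(t+s_k))-F(x(t))}{s_k} \right\| = \frac{\|F(x(t)+s_k v_k) - F(x(t+s_k))\|}{s_k} \leq L\,\|v_k - \tilde v_k\|.
\end{equation*}
Since $\tilde v_k\to v$ and $v_k\to v$, the right-hand side tends to $0$; since $d_k\to d$ and $(F(x(t+s_k))-F(x(t)))/s_k\to w$, passing to the limit yields $d=w$. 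Combining both inclusions proves $DF(x(t))(\dot x(t))=\{(F\circ x)'(t)\}$ for every $t\in\mathcal{T}$, which is almost every $t\in\R_+$.

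The argument is essentially a direct unfolding of definitions once one has the a.e.\ differentiability of $x$ and $F\circ x$ on compact intervals; there is no real obstacle. The one point where care is needed is the second inclusion, where one must rule out that $DF(x(t))(\dot x(t))$ contains spurious elements arising from \emph{alternative} tangent sequences $v_k\ne \tilde v_k$—this is exactly where the local Lipschitz estimate above is used to force $d=w$.
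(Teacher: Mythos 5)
Your proof is correct. Note that the paper does not prove this lemma at all; it is simply cited from \cite[Theorem 5.3.2]{MR2458436}, so there is no in-paper argument to compare against. Your argument supplies a self-contained proof of the cited result: you correctly reduce to a full-measure set $\mathcal{T}$ of points where $\dot x(t)$ and $(F\circ x)'(t)$ both exist (using absolute continuity of $x$, the local Lipschitz property of $F$ on a neighborhood of the compact set $x([0,T])$, and the consequent absolute continuity of $F\circ x$); the $\supseteq$ inclusion follows by testing the Bouligand tangent cone along the curve itself; and the $\subseteq$ inclusion is the genuine content, where the Lipschitz estimate
\begin{equation*}
\left\| d_k - \frac{F(x(t+s_k))-F(x(t))}{s_k} \right\| \le L\,\|v_k - \tilde v_k\| \to 0
\end{equation*}
correctly rules out spurious tangent directions and forces $d=(F\circ x)'(t)$. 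One minor housekeeping point worth making explicit: for the $\supseteq$ direction you need $t+s_k\ge 0$ and $x(t+s_k)$ to stay in the neighborhood $V$, which holds for all large $k$ since $s_k\searrow 0$; and the a.e.\ conclusion on all of $\R_+$ follows by writing $\R_+$ as a countable union of compact intervals.
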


The following result shows that lower-definite and locally Lipschitz mappings are strongly metrically regular.
\begin{proposition}\label{prop-s-m-r}
     Let $F\colon \R^d\to \R^d$ be a map where $F$ is locally Lipschitz around $\bar x$ and $DF$ is $\rho$-lower-definite around $\bar x$. Then, $F$ is strongly metrically regular at $(\bar x,F(\bar x))$.
 \end{proposition}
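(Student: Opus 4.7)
The plan is to reduce strong metric regularity of $F$ at $(\bar x, F(\bar x))$ to the classical fact that a Lipschitz, strongly monotone map on a ball is a bi-Lipschitz homeomorphism onto a ball. I will proceed in two steps: first extract strong monotonicity from the pointwise $\rho$-lower-definiteness of $DF$, and then use a contraction mapping argument to produce the single-valued Lipschitzian localization of $F^{-1}$.

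For Step 1, I pick a convex open neighborhood $U$ of $\bar x$ on which $F$ is Lipschitz with some constant $L$ and on which $DF$ is $\rho$-lower-definite. Given $x_1, x_2 \in U$, I connect them by the segment $x(t) = x_1 + t(x_2-x_1)$, which is absolutely continuous with $\dot x(t) = x_2 - x_1$. By Lemma \ref{lemma_cotingent_lips},
\begin{equation*}
\tfrac{d}{dt}(F\circ x)(t) \in DF(x(t))(x_2-x_1) \qquad \text{for a.e. } t\in [0,1].
\end{equation*}
Applying the $\rho$-lower-definite inequality pointwise and integrating gives
\begin{equation*}
\langle F(x_2) - F(x_1), x_2 - x_1 \rangle = \int_0^1 \langle \tfrac{d}{dt}(F\circ x)(t), x_2 - x_1 \rangle\, dt \geq \rho\|x_2 - x_1\|^2.
\end{equation*}
Thus $F$ is strongly monotone with constant $\rho$ on $U$, and by Cauchy--Schwarz, $\|F(x_2)-F(x_1)\|\geq \rho\|x_2-x_1\|$. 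In particular, $F$ is injective on $U$ and any localization of $F^{-1}$ is automatically $\rho^{-1}$-Lipschitz.

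For Step 2, I restrict to a closed ball $\overline{\mathbb{B}_r(\bar x)} \subset U$ and for each target $y$ near $F(\bar x)$ define the Picard-type map $T_y(x) := x - \alpha(F(x)-y)$ with $\alpha = \rho/L^2$. Expanding $\|T_y(x_1)-T_y(x_2)\|^2$ and using strong monotonicity and the Lipschitz bound on $F$ gives a contraction constant $q = \sqrt{1-\rho^2/L^2}<1$ that is independent of $y$. A triangle-inequality estimate shows that for $y$ in a sufficiently small ball $V$ around $F(\bar x)$ one has $T_y(\overline{\mathbb{B}_r(\bar x)})\subset \overline{\mathbb{B}_r(\bar x)}$, so Banach's fixed point theorem provides a unique $x^\ast \in \overline{\mathbb{B}_r(\bar x)}$ with $F(x^\ast) = y$. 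Combining this surjectivity with the injectivity and Lipschitz bound from Step 1 yields a single-valued Lipschitzian localization of $F^{-1}$ around $(F(\bar x),\bar x)$ with modulus $\rho^{-1}$, which is precisely strong metric regularity of $F$ at $(\bar x, F(\bar x))$.

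The main obstacle is Step 2: the $\rho$-lower-definiteness of $DF$ alone only produces local injectivity of $F$, not local surjectivity onto a neighborhood of $F(\bar x)$. It is here that the local Lipschitz continuity of $F$ is crucial, both to give a well-defined contraction ratio for $T_y$ and to confine the iterates to a fixed ball; a purely variational estimate would not suffice.
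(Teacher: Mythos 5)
Your proof is correct, and its first half coincides with the paper's: both integrate the chain rule of Lemma \ref{lemma_cotingent_lips} along the segment joining $x_1$ and $x_2$ and use the $\rho$-lower-definiteness of $DF$ to obtain local strong monotonicity of $F$ with modulus $\rho$ (hence local injectivity and the $\rho^{-1}$ bound for any localization of $F^{-1}$). Where you diverge is in how the Lipschitzian single-valued localization of $F^{-1}$ is actually produced. The paper does not argue surjectivity directly: it upgrades strong monotonicity to \emph{local strong maximal monotonicity} — taking an arbitrary monotone extension $S$, perturbing $a$ to $a+\varepsilon(b-F(a))$, and letting $\varepsilon\searrow 0$ with the continuity of $F$ to force $b=F(a)$ — and then invokes \cite[Theorem 5.13]{MR3823783}, which states that locally strongly maximal monotone mappings are strongly metrically regular. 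You instead run a self-contained Zarantonello/Picard argument: the map $T_y(x)=x-\alpha(F(x)-y)$ with $\alpha=\rho/L^2$ contracts with ratio $\sqrt{1-\rho^2/L^2}$ (note $\rho\leq L$ holds automatically by Cauchy--Schwarz, so this ratio is indeed in $[0,1)$), and the standard triangle-inequality estimate confines the iterates to $\overline{\mathbb{B}_r(\bar x)}$ for $y$ close to $F(\bar x)$, giving local surjectivity via Banach's fixed point theorem; one should only shrink the target ball strictly so the fixed point lands in the \emph{open} ball, ensuring the localization has full domain. Your route is more elementary and quantitative — it avoids the external maximal-monotonicity theorem and yields explicit neighborhoods and the explicit modulus $\rho^{-1}$ — while the paper's route is shorter on the page and stays within the monotone-operator machinery it cites elsewhere. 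Both are valid proofs of the proposition.
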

 \begin{proof}
 Take $\delta>0$ such that $x\tto DF(x)$ is $\rho$-lower-definite for all $x\in\mathbb{B}_\delta(\bar x)$. Also, we suppose that $F$ is Lipschitz on $\mathbb{B}_\delta(\bar x)$. Take $x_1,x_2\in \mathbb{B}_\delta(\bar x)$. Define $\alpha\colon [0,1]\to \R^d$ given by $\alpha(t) = tx_1 + (1-t)x_2$. It follows from Lemma \ref{lemma_cotingent_lips}   that $DF(\alpha(t))(x_1-x_2) = \{ \frac{d}{dt}(F\circ \alpha)(t) \}$ a.e. on $[0,1]$. Then,
 \begin{equation*}
     \begin{aligned}
         \langle F(x_1)-F(x_2),x_1-x_2 \rangle &= \langle (F\circ\alpha)(1)-(F\circ\alpha)(0),x_1-x_2 \rangle\\
         &= \left\langle \int_0^1 \frac{d}{dt}(F\circ \alpha)(t)dt,x_1-x_2 \right\rangle\\
         &= \int_0^1 \left\langle \frac{d}{dt}(F\circ \alpha)(t)dt,x_1-x_2 \right\rangle dt\\
         &\geq\int_0^1 \rho\|x_1-x_2\|^2 dt = \rho \|x_1-x_2\|^2
     \end{aligned}
 \end{equation*}
 where we have used that $DF(\alpha(t))$ is $\rho$-lower-definite. The above inequality implies that $F$ is strongly locally monotone around $(\bar x,F(\bar x))$ with modulus $\rho$. Now, suppose that $S$ is a monotone operator such that $\gph F\cap (\mathbb{B}_{\delta}(\bar x)\times \R^d)\subset \gph S$. Then, let us consider $(a,b)\in \gph S$ with $a\in \mathbb{B}_\delta(\bar x)$, then we have that $a+\varepsilon (b-F(a))\in \mathbb{B}_\delta(\bar x)$ for all $\varepsilon >0$ small enough, then by virtue of the monotonicity of $S$ we have
 \begin{equation*}
        \langle F(a+\varepsilon (b-F(a)))-b, a+\varepsilon (b-F(a)) - a\rangle\geq 0
 \end{equation*}
 it follows that $\langle F(a+\varepsilon (b-F(a)))-b, b-F(a)\rangle\geq 0$ for all $\varepsilon >0$ small enough. Since $F$ is continuous on $\mathbb{B}_{\delta}(\bar x)$, taking $\varepsilon\searrow 0$ yields $\|F(a)-b\|^2\leq 0$ thus $F(a) = b$, so $\gph F\cap (\mathbb{B}_\delta(\bar x)\times \R^d) = \gph S\cap (\mathbb{B}_\delta(\bar x)\times \R^d)$, it means that $F$ is strongly locally maximal monotone, and by virtue of \cite[Theorem 5.13]{MR3823783} we finally conclude that $F^{-1}$ admits a Lipschitzian single valued localization.
 \end{proof}
 
The next result  is a chain rule for general lower semicontinuous functions and absolutely continuous functions in terms of regular subgradients.
Recall that an element $\zeta\in\R^d$ belongs to the \emph{Fr\'echet subdifferential} of $f$ at $\bar x$, denoted by $\Hat\partial f(\bar x)$, if $( \xi, -1) \in \Hat{N}_{\epi f}(\bar x, f(\bar x))$.
	\begin{lemma}\label{lemma_chain_rule}
		Let $\varphi\colon \R^d\to \R\cup\{\infty\}$ and $x\colon \R_+\to\R^d$ be functions such that $\varphi\circ x$ and $x$ are derivable at $t\in\R_+$. Then, for all $v\in \Hat\partial \varphi(x(t))$, one has
	$ 
			\frac{d}{dt}(\varphi\circ x)(t) = \langle v,\dot{x}(t)\rangle.
	$
	\end{lemma}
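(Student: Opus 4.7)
The plan is to unpack the definition of the Fréchet subdifferential and plug in the curve. By definition, $v\in\hat\partial\varphi(x(t))$ means that there is a function $r$ with $r(y)/\|y-x(t)\|\to 0$ as $y\to x(t)$ such that
\begin{equation*}
\varphi(y)\geq \varphi(x(t))+\langle v,y-x(t)\rangle+r(y)\qquad\text{for all }y\text{ near }x(t).
\end{equation*}
Since $x$ is derivable (hence continuous) at $t$, substituting $y=x(t+h)$ for small $h$ is admissible, which yields
\begin{equation*}
\varphi(x(t+h))-\varphi(x(t))\geq \langle v,x(t+h)-x(t)\rangle+r(x(t+h)).
\end{equation*}

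Next, I divide this inequality by $h$, distinguishing the two one-sided limits. For $h>0$, the difference quotient of $\varphi\circ x$ exceeds $\langle v,(x(t+h)-x(t))/h\rangle+r(x(t+h))/h$. The first summand tends to $\langle v,\dot x(t)\rangle$ by the derivability of $x$, while the remainder term satisfies
\begin{equation*}
\left|\frac{r(x(t+h))}{h}\right|=\frac{|r(x(t+h))|}{\|x(t+h)-x(t)\|}\cdot\frac{\|x(t+h)-x(t)\|}{h}\longrightarrow 0\cdot\|\dot x(t)\|=0,
\end{equation*}
using that $x(t+h)\to x(t)$ and the Fréchet remainder vanishes. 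Passing to the limit $h\searrow 0$ therefore gives $\frac{d}{dt}(\varphi\circ x)(t)\geq \langle v,\dot x(t)\rangle$. For $h<0$, the same manipulation produces the reverse inequality because dividing by the negative number $h$ flips the sign (with the $r$-term again going to zero by the same bounded-quotient argument). Since $\varphi\circ x$ is assumed derivable at $t$, its one-sided derivatives coincide with $\frac{d}{dt}(\varphi\circ x)(t)$, and the two bounds combine into the desired equality.

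The argument contains no real obstacle; the only point requiring a pinch of care is the handling of the remainder $r(x(t+h))/h$ for $h$ of either sign, which is settled by factoring it as shown and invoking the boundedness of the difference quotient of $x$. No further subdifferential machinery (outside the very definition of $\hat\partial\varphi$) is needed, which is why the statement holds under the minimal hypothesis that both $x$ and $\varphi\circ x$ are merely derivable at the single point $t$.
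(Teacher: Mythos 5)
Your proof is correct and follows essentially the same route as the paper: plug $y=x(t+h)$ into the analytic characterization of $v\in\Hat\partial\varphi(x(t))$, divide by $h$ of each sign, and let the one-sided limits squeeze the derivative of $\varphi\circ x$. The only (cosmetic) difference is that you phrase the Fréchet inequality with a little-$o$ remainder $r(y)$ and kill $r(x(t+h))/h$ by factoring through $\|x(t+h)-x(t)\|$, whereas the paper fixes $\varepsilon>0$, obtains the bounds $\langle v,\dot x(t)\rangle\mp\varepsilon\|\dot x(t)\|$, and then lets $\varepsilon\searrow 0$; both are equivalent, and the degenerate case $x(t+h)=x(t)$ in your quotient is harmless since the remainder can be taken to vanish there.
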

	\begin{proof}
		Consider  $\varepsilon>0$ arbitrary. By definition, there is $\delta>0$ such that for all $y\in \mathbb{B}_{\delta}(x(t))$, one has 
		\begin{equation*}
			\varphi(y)\geq \varphi(x(t)) +\langle v,y-x(t)\rangle - \varepsilon\|y-x(t)\|
		\end{equation*} 
		Take $y=x(t+h)$ where for all $h$ small enough, $x(t+h)\in \mathbb{B}_{\delta}(x(t))$, then, 
		\begin{equation}\label{eqn_frechet}
			\varphi(x(t+h))\geq \varphi(x(t)) +\langle v,x(t+h)-x(t)\rangle - \varepsilon\|x(t+h)-x(t)\|
		\end{equation}
		so dividing by $h>0$ and taking the limit when $h\searrow 0$ we obtain that $\frac{d}{dt}(\varphi\circ x)(t)\geq \langle v,\dot{x}(t)\rangle-\varepsilon\|\dot{x}(t)\|$. For other side, dividing by $h<0$ the inequality \eqref{eqn_frechet} and taking $h\nearrow 0$, we get that $\frac{d}{dt}(\varphi\circ x)(t)\leq \langle v,\dot{x}(t)\rangle+\varepsilon\|\dot{x}(t)\|$. Finally, taking $\varepsilon\searrow 0$ we conclude the desired equality.
	\end{proof}

Let us recall that the subdifferential $\partial f$ of a locally Lipschitz function $f\colon \mathbb{R}^d \to \mathbb{R}$ is \textit{conservative} if, for every absolutely continuous curve $\gamma\colon [0,1] \to \mathbb{R}^d$ with $\gamma(0) = \gamma(1)$, the following holds:
\begin{equation*}
    \int_{0}^{1}\max_{v \in \partial f(\gamma(t))}\langle \dot{\gamma}(t), v \rangle\, dt = 0.
\end{equation*}
We refer to  \cite{MR4276581} for more details about the previous definition.  

 We recall the equivalence between being conservative and the chain rule for locally Lipschitz functions. It can be found in \cite[Corollary 2]{MR4276581}.
\begin{proposition}
    Let $f\colon \R^d\to\R$ be a locally Lipschitz function, then the following assertions are equivalent:
    \begin{enumerate}
        \item [(a)] $\partial f$ is conservative.
        \item [(b)] For any absolutely continuous function $x\colon [0,1]\to \R^d$ one has 
        \begin{equation*}
            \frac{d}{dt}(f\circ x)(t) = \langle v,\dot{x}(t)\rangle, \text{ for all } v\in\partial f(x(t))\text{ and for a.e. } t\in [0,1].
        \end{equation*}
    \end{enumerate}
\end{proposition}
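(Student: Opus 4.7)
\medskip

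\noindent\textbf{Proof Proposal.} The plan is to prove the two implications separately, with the harder direction relying on a ``there-and-back'' closed loop.

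\medskip

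\noindent\emph{Step 1: $(b) \Rightarrow (a)$.} Fix an absolutely continuous closed curve $\gamma\colon [0,1]\to \R^d$ with $\gamma(0)=\gamma(1)$. Since $\partial f$ is compact-convex-valued and upper semicontinuous for locally Lipschitz $f$, a standard measurable selection produces a measurable $v\colon [0,1]\to \R^d$ with $v(t)\in \partial f(\gamma(t))$ and $\langle v(t),\dot\gamma(t)\rangle = \max_{w\in\partial f(\gamma(t))}\langle w,\dot\gamma(t)\rangle$ a.e. Applying hypothesis (b) gives $\tfrac{d}{dt}(f\circ\gamma)(t)=\langle v(t),\dot\gamma(t)\rangle$ a.e., and integrating yields
\[
\int_0^1 \max_{v\in\partial f(\gamma(t))}\langle v,\dot\gamma(t)\rangle\, dt \;=\; f(\gamma(1))-f(\gamma(0)) \;=\; 0,
\]
so $\partial f$ is conservative.

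\medskip

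\noindent\emph{Step 2: $(a) \Rightarrow (b)$, common-value step.} Given an arbitrary absolutely continuous $x\colon[0,1]\to \R^d$, build the closed loop $\gamma(t):=x(2t)$ for $t\in[0,1/2]$ and $\gamma(t):=x(2-2t)$ for $t\in[1/2,1]$. Then $\gamma$ is absolutely continuous with $\gamma(0)=\gamma(1)=x(0)$, so conservativity applies. A change of variables on each half-interval (using $\dot\gamma=\pm 2\dot x$ and $\max_{v}\langle v,-h\rangle=-\min_v\langle v,h\rangle$) rewrites the conservativity identity as
\[
\int_0^1 \Bigl[\max_{v\in\partial f(x(s))}\langle v,\dot x(s)\rangle \;-\; \min_{v\in\partial f(x(s))}\langle v,\dot x(s)\rangle\Bigr]\, ds \;=\; 0.
\]
Since the integrand is nonnegative, it vanishes a.e., meaning $\langle v,\dot x(s)\rangle$ takes a single value as $v$ ranges over $\partial f(x(s))$.

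\medskip

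\noindent\emph{Step 3: $(a) \Rightarrow (b)$, identifying the derivative.} To pin this common value down as $\tfrac{d}{dt}(f\circ x)(t)$, invoke the standard Clarke sandwich: $f\circ x$ is absolutely continuous (since $f$ is locally Lipschitz and $x$ is AC with compact image) and hence differentiable a.e.; Clarke's generalized directional derivative satisfies $f^{\circ}(y;h)=\max_{v\in\partial f(y)}\langle v,h\rangle$, and $\partial(-f)=-\partial f$ yields the lower analogue. Together these give
\[
\min_{v\in\partial f(x(t))}\langle v,\dot x(t)\rangle \;\le\; \tfrac{d}{dt}(f\circ x)(t) \;\le\; \max_{v\in\partial f(x(t))}\langle v,\dot x(t)\rangle \quad\text{a.e.}
\]
Combining with Step 2 forces the three quantities to coincide, which is exactly the chain rule in (b).

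\medskip

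\noindent The routine parts are the absolute continuity of the composite loop $\gamma$ and the change-of-variables bookkeeping; the main conceptual obstacle is the Clarke sandwich in Step 3, which the paper treats as standard (cf.\ the references on generalized gradients cited in Section 2) so it can simply be quoted here.
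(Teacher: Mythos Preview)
Your argument is correct. The there-and-back loop in Step~2 is the standard trick for extracting the pointwise identity $\max_v\langle v,\dot x\rangle=\min_v\langle v,\dot x\rangle$ from the integral conservativity condition, and the Clarke sandwich in Step~3 (via $f^{\circ}(x(t);\dot x(t))$ and $(-f)^{\circ}(x(t);\dot x(t))$, using local Lipschitzness to replace $x(t+h)$ by $x(t)+h\dot x(t)$) is valid for a.e.\ $t$ where both $x$ and $f\circ x$ are differentiable.

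As for comparison: the paper does not give its own proof of this proposition at all---it simply cites \cite[Corollary~2]{MR4276581} (Bolte--Pauwels) and moves on. So you have supplied a self-contained argument where the paper relies on an external reference. Your proof is essentially the same as the one in the cited source, which also uses the reversed-path construction to pass from the closed-loop integral condition to the a.e.\ chain rule; the only cosmetic difference is that Bolte--Pauwels work with general set-valued ``conservative fields'' $D$ rather than specifically $\partial f$, so their sandwich step uses Lebourg's mean-value theorem in place of the Clarke directional derivative, but for $D=\partial f$ the two arguments coincide.
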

 The following lemma is an elementary property involving real numbers.
\begin{lemma}\label{numbers_lemma}
    Let $A,B,C\in \R_+$ and $x\in\R$ satisfying the following inequality: $Ax^2\leq Bx+C$ then one has $|x|\leq \frac{B}{A}+\sqrt{\frac{C}{A}}$.
\end{lemma}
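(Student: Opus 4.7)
The plan is a short computation: turn the given inequality into a quadratic inequality in $|x|$, apply the quadratic formula, and bound the resulting square root crudely.

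First I would observe that since $B \geq 0$, one has $Bx \leq B|x|$ for every real $x$, so the hypothesis $Ax^2 \leq Bx + C$ implies
\begin{equation*}
A|x|^2 - B|x| - C \leq 0.
\end{equation*}
Viewing this as a quadratic inequality in the nonnegative variable $t := |x|$ (with $A > 0$, which is forced for the conclusion to be well-defined), the quadratic formula immediately yields
\begin{equation*}
|x| \leq \frac{B + \sqrt{B^2 + 4AC}}{2A}.
\end{equation*}

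Next I would apply the elementary inequality $\sqrt{B^2 + 4AC} \leq B + 2\sqrt{AC}$, which follows by squaring: $(B + 2\sqrt{AC})^2 = B^2 + 4B\sqrt{AC} + 4AC \geq B^2 + 4AC$, since all quantities are nonnegative. Substituting into the previous display and simplifying gives exactly $|x| \leq \frac{B}{A} + \sqrt{C/A}$, which is the desired bound.

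No step here presents a real obstacle; the only subtlety worth flagging is that the sign of $x$ is irrelevant, because $B \geq 0$ lets us replace $Bx$ on the right-hand side by $B|x|$ without weakening the estimate, after which the problem reduces to a one-variable quadratic inequality handled by standard formulas.
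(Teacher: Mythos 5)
Your argument is correct: reducing to $A|x|^2 - B|x| - C \le 0$ via $Bx \le B|x|$, applying the quadratic formula in $t=|x|$, and then using $\sqrt{B^2+4AC}\le B+2\sqrt{AC}$ gives exactly the claimed bound $|x|\le \frac{B}{A}+\sqrt{C/A}$, and your remark that $A>0$ is implicitly required is accurate. The paper states this lemma without proof as an elementary fact, and your computation is precisely the standard argument that fills in that omission.
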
	
\vspace{-0.2cm}
   \section{Proof of existence and uniqueness}
   \subsection{Preparatory results}
   The next result, a consequence of Lemma \ref{lemma_cotingent_lips}, provides an integral representation of \eqref{general_system} under additional assumptions.
\begin{proposition}\label{equivalence_prop}
    Suppose that $F\colon \R^d\to \R^d$ is a locally Lipschitz function on $\Omega$. If $x\colon \R_+\to \R^d$ is an absolutely continuous function and $v_1,v_2\colon \R_+\to \R^d$ are measurable functions with $v_i(t)\in\partial\varphi(x_i(t))$ a.e.  for $i\in\{1,2\}$ and 
    \begin{equation}\label{general_system_int}
         F(x_0) - \int_0^t v_1(s)+v_2(s)ds = F(x(t)) \quad \textrm{ for all } t\geq 0.
    \end{equation}
    Then, $x$ satisfies \eqref{general_system}. Moreover, if $x\colon \R_+\to \R^d$ is a solution of \eqref{general_system} with $x(\R_+)\subset \Omega$, then there are measurable functions $v_1,v_2\colon \R_+\to\R^d$ which satisfies \eqref{general_system_int}.
\end{proposition}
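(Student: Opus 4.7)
The plan is to exploit Lemma \ref{lemma_cotingent_lips}, which equates the graphical derivative $DF(x(t))(\dot x(t))$ with the singleton $\{(F\circ x)'(t)\}$ almost everywhere, together with the fact that the composition of a locally Lipschitz map on $\Omega$ with an absolutely continuous curve taking values in $\Omega$ is itself absolutely continuous on every compact interval. The two directions are then essentially the fundamental theorem of calculus applied in opposite directions.

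For the first implication, I would start from the integral identity
\begin{equation*}
F(x(t)) = F(x_0) - \int_0^t \bigl(v_1(s)+v_2(s)\bigr)\,ds \quad \text{for all } t\geq 0.
\end{equation*}
Since $x$ is absolutely continuous and $F$ is locally Lipschitz, $t\mapsto F(x(t))$ is absolutely continuous on every compact subinterval of $\mathbb{R}_+$, so both sides are differentiable at almost every $t$. Differentiating yields
\begin{equation*}
\tfrac{d}{dt}(F\circ x)(t) = -v_1(t)-v_2(t) \quad \text{for a.e. } t\in\mathbb{R}_+.
\end{equation*}
Applying Lemma \ref{lemma_cotingent_lips}, the left-hand side is the unique element of $DF(x(t))(\dot x(t))$, so $-v_1(t)-v_2(t)\in DF(x(t))(\dot x(t))$, i.e., $0\in v_1(t)+v_2(t)+DF(x(t))(\dot x(t))$ a.e. Combined with the hypothesis $v_i(t)\in \partial\varphi_i(x(t))$, this is exactly the inclusion in \eqref{general_system}, and $x(0)=x_0$ follows by setting $t=0$ in the integral identity.

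For the converse, suppose $x(\cdot)$ is a solution of \eqref{general_system} with $x(\mathbb{R}_+)\subset\Omega$. By the definition of a solution, there exist measurable $v_1,v_2\in L^2_{\mathrm{loc}}(\mathbb{R}_+;\mathbb{R}^d)$ such that $v_i(t)\in\partial\varphi_i(x(t))$ and $v_1(t)+v_2(t)\in -DF(x(t))(\dot x(t))$ almost everywhere. Using Lemma \ref{lemma_cotingent_lips} once more (the hypothesis $x(\mathbb{R}_+)\subset\Omega$ is exactly what is needed to invoke it globally in $t$), the set $DF(x(t))(\dot x(t))$ reduces to $\{(F\circ x)'(t)\}$ a.e., hence $v_1(t)+v_2(t)=-(F\circ x)'(t)$ almost everywhere. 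Because $F\circ x$ is absolutely continuous on every compact interval (locally Lipschitz composed with absolutely continuous, with $x$ staying in $\Omega$), the fundamental theorem of calculus gives
\begin{equation*}
F(x(t))-F(x_0) = \int_0^t (F\circ x)'(s)\,ds = -\int_0^t \bigl(v_1(s)+v_2(s)\bigr)\,ds,
\end{equation*}
which is \eqref{general_system_int}.

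The only real obstacle is verifying the absolute continuity of $F\circ x$ on compact intervals so that the fundamental theorem of calculus applies; this is where the standing assumption $x(\mathbb{R}_+)\subset\Omega$ together with the local Lipschitz property of $F$ on $\Omega$ is used. Everything else is a direct combination of Lemma \ref{lemma_cotingent_lips} with differentiation/integration of absolutely continuous functions.
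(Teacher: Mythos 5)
Your proof is correct and is essentially the argument the paper intends: the paper offers no separate proof, saying only that the proposition is ``a consequence of Lemma~\ref{lemma_cotingent_lips}'', and your differentiation/integration of the absolutely continuous map $F\circ x$, using that lemma to identify $DF(x(t))(\dot x(t))$ with $\{\tfrac{d}{dt}(F\circ x)(t)\}$ a.e., is exactly that consequence. One small caveat, inherited from the proposition's own statement rather than introduced by you: setting $t=0$ only gives $F(x(0))=F(x_0)$, which forces $x(0)=x_0$ only when $F$ is locally injective (as it is in the paper's applications, where $F$ is strongly metrically regular), so the initial condition is not literally a consequence of the integral identity alone.
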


Now, we establish sufficient conditions to ensure that any absolutely continuous solution $x\colon \R_+\to \R^d$ of \eqref{general_system} is, in fact, energetic.
 \begin{proposition}
 Assume that \ref{FixH1}, \ref{FixH2} and \ref{FixH3} hold and the graphical derivative of $F$ is $\rho$-lower-definite at every point of $\Omega$. Let us consider an absolutely continuous solution $x\colon \R_+\to \R^d$ of \eqref{general_system}. Suppose that  $\partial\varphi_2$ is conservative on $\Omega$, and one of the following conditions holds
     \begin{enumerate}
         \item [(a)] $\varphi_1$ is locally bounded from above on $\Omega$.
         \item [(b)] $\varphi\circ x$ is nonincreasing on $\R_+$.
     \end{enumerate}
     then $x(t)\in\Omega$ for all $t\in \R_+$ and it is an energetic solution. Furthermore, in case (a) the function $\varphi \circ x$ is absolutely continuous.
 \end{proposition}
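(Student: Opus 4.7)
The plan is to establish, at almost every $t$ where $x(t)\in\Omega$, the pointwise chain-rule identity
\begin{equation*}
\tfrac{d}{dt}(\varphi\circ x)(t)=\langle v_1(t)+v_2(t),\dot x(t)\rangle,
\end{equation*}
and then combine it with the $\rho$-lower-definiteness of $DF$ applied to the inclusion $-(v_1(t)+v_2(t))\in DF(x(t))(\dot x(t))$ to obtain $\tfrac{d}{dt}(\varphi\circ x)(t)\leq -\rho\|\dot x(t)\|^2$ almost everywhere, which integrates to \eqref{integral_prop}. For $\varphi_2$, the chain rule $\tfrac{d}{dt}(\varphi_2\circ x)(t)=\langle v_2(t),\dot x(t)\rangle$ follows directly from the conservativity of $\partial\varphi_2$ via the characterization recalled in the appendix. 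For $\varphi_1$, the plr property forces $\partial\varphi_1$ to coincide with the Fr\'echet subdifferential on $\dom\partial\varphi_1$, so whenever $\varphi_1\circ x$ is derivable at $t$, Lemma \ref{lemma_chain_rule} applies and yields $\tfrac{d}{dt}(\varphi_1\circ x)(t)=\langle v_1(t),\dot x(t)\rangle$.

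In case (b), the assumption that $\varphi\circ x$ is nonincreasing combined with $\varphi(x(0))=\varphi(x_0)$ forces, via hypothesis \ref{FixH1}, the inclusion $x(t)\in\{\varphi\leq\varphi(x_0)\}\subset\Omega$ for every $t\geq 0$. Local Lipschitz continuity of $\varphi_2$ on $\Omega$ together with absolute continuity of $x$ makes $\varphi_2\circ x$ absolutely continuous on compact subintervals, so $\varphi_1\circ x=(\varphi\circ x)-(\varphi_2\circ x)$ is a difference of a nonincreasing function and an AC function, hence of bounded variation and therefore differentiable almost everywhere. This provides exactly the regularity needed to invoke Lemma \ref{lemma_chain_rule} for $\varphi_1$ at a.e.~$t$. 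Summing the two chain rules and invoking lower-definiteness gives the pointwise energy inequality, and since $\varphi\circ x$ is a nonincreasing BV function its distributional derivative decomposes into a nonpositive absolutely continuous part and a nonpositive singular part, yielding $\varphi(x(s))-\varphi(x(t))\geq -\int_s^t\tfrac{d}{d\tau}(\varphi\circ x)(\tau)\,d\tau\geq \rho\int_s^t\|\dot x(\tau)\|^2\,d\tau$.

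In case (a), the plr property of $\varphi_1$ combined with local boundedness from above implies local Lipschitz continuity of $\varphi_1$ on $\Omega$ (a standard property of prox-regular functions). To establish $x(\R_+)\subset\Omega$, I would bootstrap: set $T:=\sup\{s\geq 0:x([0,s])\subset\Omega\}$, which is positive since $\Omega$ is open and $x_0\in\Omega$. On $[0,T)$ both $\varphi_1\circ x$ and $\varphi_2\circ x$ are absolutely continuous, so the chain-rule identity together with the lower-definiteness inequality yields $\tfrac{d}{dt}(\varphi\circ x)(t)\leq -\rho\|\dot x(t)\|^2\leq 0$ almost everywhere. Hence $\varphi\circ x$ is nonincreasing on $[0,T)$ and bounded above by $\varphi(x_0)$; continuity of $x$, lower semicontinuity of $\varphi$, and \ref{FixH1} then force $x(T)\in\Omega$, and openness of $\Omega$ contradicts the maximality of $T$ unless $T=\infty$. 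Integration of the pointwise inequality over $[s,t]$ yields \eqref{integral_prop}, and absolute continuity of $\varphi\circ x$ is inherited from its two AC summands.

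The main obstacle is the chain rule for $\varphi_1$ in case (b), where one does not have the local Lipschitz regularity that would immediately make $\varphi_1\circ x$ absolutely continuous. The resolution is to exploit precisely the hypothesis of the case: because $\varphi\circ x$ is assumed monotone and $\varphi_2\circ x$ is AC, the composition $\varphi_1\circ x$ is automatically BV, which supplies the a.e.~differentiability required by Lemma \ref{lemma_chain_rule}, even though $\varphi_1$ itself may be highly irregular on $\Omega$.
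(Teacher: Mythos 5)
Your proposal is correct and follows essentially the same route as the paper's own proof: you reduce both cases to the pointwise chain rule $\tfrac{d}{dt}(\varphi\circ x)=\langle v_1+v_2,\dot{x}\rangle$, obtained from conservativity for $\varphi_2$ and Lemma~\ref{lemma_chain_rule} (via the plr identification of Clarke and Fréchet subdifferentials) for $\varphi_1$, then combine with $\rho$-lower-definiteness and integrate. In case (a) you use, as the paper does, that a plr function locally bounded from above is locally Lipschitz, and you run the same maximal-interval bootstrap via \ref{FixH1} to get $x(\mathbb{R}_+)\subset\Omega$. In case (b) the paper's exposition is terse on why $\varphi_1\circ x$ is differentiable a.e.; your argument that it is BV as the difference of the nonincreasing $\varphi\circ x$ and the AC $\varphi_2\circ x$, together with the monotone-function inequality $\varphi(x(s))-\varphi(x(t))\geq-\int_s^t(\varphi\circ x)'\,d\tau$, makes that step explicit — a modest clarification, not a different method.
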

\begin{proof}
    Take $T := \inf\{t\geq 0: x(t)\notin \Omega\}$, by continuity and $x_0\in \Omega$, we have $T>0$ and $x(t)\in \Omega$ for all $t\in [0,T[$. Take $v\colon \R_+\to \R^d$ a measurable function such that 
     \begin{equation*}
         v(t)\in -DF(x(t))(\dot{x}(t))\cap (\partial\varphi_1(x(t))+\partial\varphi_2(x(t))) \text{ a.e. on }\R_+,
     \end{equation*}
     which implies that
$$
\rho\|\dot{x}(t)\|^2\leq \langle -v(t),\dot{x}(t)\rangle \quad \textrm{ for a.e. } t\in [0,T[.
$$
First case: Assumption \textit{(a)} holds.\\
     According to \cite[Proposition 25.5]{MR3108443}, if $\varphi_1$ is locally bounded from above on $\Omega$ we have that $\varphi_1$  is locally Lipschitz  there. Since $\varphi$ is locally Lipschitz on $\Omega$ and $x$ is absolutely continuous, we have $\varphi\circ x$ is absolutely continuous. By virtue of Lemma \ref{lemma_chain_rule} and given that $\varphi_2$ is conservative, we have a.e. on $[0,T[$, $\frac{d}{dt}(\varphi\circ x)(t) = \langle v(t),\dot{x}(t)\rangle\leq -\rho\|\dot{x}(t)\|^2$ (since $F$ is $\rho$-lower-definite). It implies that for all $s,t\in [0,T[$:
     \begin{equation}\label{eqn_int_kp}
         \rho\int_s^t\|\dot{x}(\tau)\|^2d\tau\leq \varphi(x(s))-\varphi(x(t)).
     \end{equation}
     Therefore, \eqref{integral_prop} holds. Finally, suppose that $T<\infty$. According to \eqref{eqn_int_kp}, it turns out $\varphi(x(t))\leq \varphi(x_0)$ for all $t\in [0,T[$ and since $\varphi$ is lsc one has $\varphi(x(T))\leq \varphi(x_0)$. Then $x(T)\in\Omega$, which contradicts the definition of $T$.  Hence, $T=\infty$ and the conclusion holds when \textit{(a)} occurs.\\
        Second case: Assumption \textit{(b)} holds.\\
        Now, assume \textit{(b)} holds, we have directly $T=\infty$. Since $\varphi\circ x$ is nonincreasing it follows that it is derivable almost everywhere. Using that $\varphi_2$ is conservative and Lemma \ref{lemma_chain_rule} we have a.e.
        \begin{equation*}
            \rho\|\dot{x}(t)\|^2\leq \langle v(t),\dot{x}(t)\rangle=-\frac{d}{dt}(\varphi\circ x)(t).
        \end{equation*}
        Then, by integrating, for all $t,s\geq 0$:
        \begin{equation*}
            \rho\int_s^t\|\dot{x}(\tau)\|^2d\tau\leq \int_s^t -\frac{d}{d\tau}(\varphi\circ x)(\tau)d\tau\leq \varphi(x(s))-\varphi(x(t)),
        \end{equation*}
        which implies that \eqref{integral_prop} holds.
\end{proof}
  \begin{lemma}\label{lemma_local_solution}
 Let $F\colon \R^d\to \R^d$ be a map such that $F$ is locally Lipschitz around $x_0$ and $DF$ is $\rho$-lower-definite around $x_0$. Suppose $\varphi  \in \mathcal{C}^{1,+}$ around $x_0$. Then,  there are  $T>0$  and an unique absolutely continuous function $x\colon [0, T]\to \R^d$   such that $x(0)=x_0$ and 
 	\begin{equation}\label{eqn_integral_lemma}
 	      F(x_0) - \int_0^t \nabla  \varphi(x(t)) ds = F(x(t)), 	\forall t\in [0,T].
 	\end{equation}
  Moreover for a.e. $t\in [0,T]$, $-\nabla\varphi(x(t))= DF(x(t))(\dot{x}(t))$ and for all $s,t\in [0,T]$ with $s<t$ 
  \begin{equation}\label{eqn-descent01111}
      \rho\int_s^t\|\dot{x}(\tau)\|^2d\tau\leq \varphi(x(s))-\varphi(x(t)).
  \end{equation}
 \end{lemma}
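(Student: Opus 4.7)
The plan is to reduce \eqref{eqn_integral_lemma} to a classical ODE on a small time interval by inverting $F$ locally. By Proposition \ref{prop-s-m-r}, the hypotheses that $F$ is locally Lipschitz around $x_0$ and $DF$ is $\rho$-lower-definite around $x_0$ imply that $F$ is strongly metrically regular at $(x_0,F(x_0))$; that is, $F^{-1}$ admits a Lipschitzian single-valued localization $G\colon V\to U$ around $(F(x_0),x_0)$ with $G(F(x_0))=x_0$ and some Lipschitz constant $L>0$. Shrinking $U$ if necessary, we may assume $\nabla\varphi$ is $L'$-Lipschitz on $U$ (using $\varphi\in\mathcal{C}^{1,+}$).

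Next I would recast \eqref{eqn_integral_lemma} as a fixed-point equation in $y(t):=F(x(t))$. Writing $x(t)=G(y(t))$, the relation \eqref{eqn_integral_lemma} becomes the integral equation
\begin{equation*}
    y(t)=F(x_0)-\int_0^t \nabla\varphi(G(y(s)))\,ds,
\end{equation*}
i.e.\ $y$ is the (classical) solution of the ODE $\dot y(t)=-\nabla\varphi(G(y(t)))$ with $y(0)=F(x_0)$. The right-hand side is Lipschitz on $V$ (composition of the $L$-Lipschitz map $G$ and the $L'$-Lipschitz map $\nabla\varphi$), so the Picard--Lindel\"of theorem yields a unique $C^1$ solution $y\colon[0,T]\to V$ for some $T>0$. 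Setting $x(t):=G(y(t))$ gives an absolutely continuous (indeed Lipschitz) function on $[0,T]$ with $x(0)=x_0$ satisfying \eqref{eqn_integral_lemma}; uniqueness for $x$ follows from uniqueness for $y$ together with the single-valuedness of $G$.

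For the differential inclusion, note that $F\circ x=y$ is $C^1$ with $\dot y(t)=-\nabla\varphi(x(t))$, so by Lemma \ref{lemma_cotingent_lips} applied to the locally Lipschitz map $F$ and the absolutely continuous curve $x$, we obtain for a.e.\ $t\in[0,T]$
\begin{equation*}
    DF(x(t))(\dot x(t))=\Bigl\{\tfrac{d}{dt}(F\circ x)(t)\Bigr\}=\{\dot y(t)\}=\{-\nabla\varphi(x(t))\},
\end{equation*}
which is the desired identity. Finally, for the energy estimate \eqref{eqn-descent01111}, use the $\mathcal{C}^1$ chain rule $\frac{d}{dt}\varphi(x(t))=\langle\nabla\varphi(x(t)),\dot x(t)\rangle$ together with the $\rho$-lower-definiteness applied to $(\dot x(t),-\nabla\varphi(x(t)))\in\gph DF(x(t))$:
\begin{equation*}
    -\tfrac{d}{dt}\varphi(x(t))=\langle -\nabla\varphi(x(t)),\dot x(t)\rangle\geq\rho\|\dot x(t)\|^2.
\end{equation*}
Integrating from $s$ to $t$ yields \eqref{eqn-descent01111}.

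I do not foresee a genuinely hard step here: the setup is that of classical ODE theory once the local Lipschitz inverse $G$ is available, and both the chain rule for $F\circ x$ (Lemma \ref{lemma_cotingent_lips}) and the strong metric regularity of $F$ (Proposition \ref{prop-s-m-r}) are already in hand. The most delicate piece is checking that the solution $y$ stays in the domain $V$ of the localization long enough to extract a positive $T$, but this is a standard consequence of continuity of $y$ and $y(0)=F(x_0)\in\operatorname{int}V$.
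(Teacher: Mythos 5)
Your argument is correct and is essentially the paper's proof: the paper also invokes Proposition \ref{prop-s-m-r} to get the Lipschitz localization $\theta$ of $F^{-1}$, runs a Banach fixed-point/contraction argument on the map $x\mapsto \theta\bigl(F(x_0)-\int_0^t\nabla\varphi(x(s))\,ds\bigr)$ (your change of variables $y=F(x)$ plus Picard--Lindel\"of is just this contraction repackaged), and then concludes exactly as you do via Lemma \ref{lemma_cotingent_lips} and the $\rho$-lower-definiteness of $DF$ near $x_0$. The only cosmetic difference is that you cite the classical ODE theorem for $\dot y=-\nabla\varphi(G(y))$ instead of verifying the contraction on the space of curves directly.
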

 
 \begin{proof}
 	  By using Proposition \ref{prop-s-m-r}, $F$ is strongly metrically regular at $(x_0,F(x_0))$, there is a $\eta$-Lipschitz localization of $F^{-1}$ around of $(F(x_0),x_0)$ for some $\eta>0$, it means there is $\theta\colon V'\to U'$ where $V',U'$ are neighborhoods of $F(x_0)$ and $x_0$ with $\gph\theta = \gph F^{-1}\cap V'\times U'$. By shrinking $U'$, we can suppose that $DF$ is $\rho$-lower-definite on $U'$.\\
 	Now, let us consider $r_0>0$ such that $\mathbb{B}_{r_0}[x_0]\subset   U'$,  $\theta^{-1}( \mathbb{B}_{r_0}[x_0]  ) \subset   V'$,  $\mathbb{B}_{r_0} [F(x_0)] \subset  V'$, and such that $\varphi$ is $\mathcal{C}^{1,+}$ on $\mathbb{B}_{2r_0}(x_0)$. We set the constant
 	\begin{equation*}
 	    T := \min\{ r_0, \eta^{-1}\} \left(  \sup\left\{  \| \nabla \varphi(x)\| : x  \in \mathbb{B}_{r_0}[x_0]  \right\}+2L\right)^{-1},
 	\end{equation*}
 	where $L>0$ is the Lipschitz constant of $\nabla \varphi$   on $ \mathbb{B}_{r_0}[x_0] $. Using this constant we  define  the   (complete) metric space $(\mathcal{X},d_\infty)$ with 
  \begin{equation*}
      \begin{aligned}
          \mathcal{X}:= \{ x \in   \CC([0,T]; \mathbb{B}_{r_0}[x_0]): x(0)=x_0    \} \textrm{ and }d_{\infty} (x,y) := \sup_{t\in [0,T] } \| x(t) - y(t) \|  .
      \end{aligned}
  \end{equation*}
 	\noindent\textbf{Claim 1:} The operator $\Phi \colon \mathcal{X} \to \mathcal{X}$, given by
 		\begin{equation*}
 			\Phi\colon x\mapsto \Phi(x)(t):=\theta(F(x_0)-\int_{0}^{t} \nabla \varphi (x(s)) ds)
 		\end{equation*}  
 		is a (well-defined)  contraction, and consequently it has a unique fixed point. \\
 	\noindent \emph{Proof of Claim 1:} Indeed, consider any $x \in \mathcal{X}$ and   $   t\in [0,T]$. It follows that   
 	\begin{equation*}
 		\left\|  \int_{0}^{t} \nabla\varphi (x(t) ) ds \right\|  \leq T \sup\left\{  \| \nabla \varphi(x)\| : x  \in \mathbb{B}_{r_0}[x_0]  \right\}\leq r_0.
 	\end{equation*}
 	Therefore, $F(x_0)-\int_{0}^{t} \nabla \varphi (x(s)) ds$ belongs to $\mathbb{B}_{r_0}[F(x_0)]$ and consequently  
 	$\Phi_\lambda(x)(t)$ is well-defined for each $t \in [0, T]$, and a continuous function on $t$. Moreover, for any $x, y\in \mathcal{X}$, we have that 
 	\begin{equation*}
 		\begin{aligned}
 			\|\Phi(x)(t)-\Phi(y)(t)\| &\leq  \eta\int_0^t \|\nabla {\varphi}(x(s)) -\nabla {\varphi}(y(s))\| ds\\
 			 & \leq  L\cdot\eta \int_0^{T} \|x(s)-y(s)\|\leq L\cdot\eta\cdot T\cdot d_\infty (x,y) \leq \frac{1}{2}d_\infty (x,y).
 		\end{aligned}
 	\end{equation*}
  	The last part follows from the Banach Contraction Theorem. The claim is proved. \\	
\noindent\textbf{Claim 2:} The (unique) fixed point of the operator $\Phi$ satisfies  \eqref{eqn_integral_lemma}.  \\
 		\noindent \emph{Proof of Claim 2:} It follows that  for all $t \in [0,T]$,
 		\begin{equation*}
 		x(t) = 	\theta(F(x_0)-\int_{0}^{t} \nabla \varphi (x(s)) ds),
 		\end{equation*}
 which implies that $F(x_0)-\int_{0}^{t} \nabla \varphi (x(s)) ds = F(x(t))$. Hence, $x$  satisfies \eqref{eqn_integral_lemma}. Finally, by Lemma \ref{lemma_cotingent_lips}, for a.e. on $[0,T]$, we have $-\nabla\varphi(x(t))= DF(x(t))(\dot{x}(t))$. We also conclude \eqref{eqn-descent01111} by using that $DF$ is $\rho$-lower-definite on $U'$. Indeed, a.e. on $[0,T]$, $-\frac{d}{dt}(\varphi\circ x)(t)=\langle -\nabla\varphi(x(t)),\dot{x}(t) \rangle\geq \rho\|\dot{x}(t)\|^2$, then we integrate on $[s,t]$ for $s<t$ with $s,t\in [0,T]$.
 	\end{proof}

  The next proposition is a technical tool that allows us to extend solutions of generalized equation to maximal elements, and this will be used in the construction of the solution of the generalized system \eqref{general_system}.  In order to present this result as a general tool, for a given map $F \colon \mathbb{R}^d \to \mathbb{R}^d$ and $x_0\in\R^d$, lower semicontinuous functions $\varphi_1,\varphi_2 \colon \mathbb{R}^d \to \R\cup\{\infty\}$, and a nonempty set $U\subset \R^d$, we define $\mathcal{K}:=\mathcal{K}_{\varphi_1, \varphi_2, F,U}(x_0)$ as the set of all elements $(x,v,w, \tau)$ such that  $\tau \in (0, \infty]$,  $x\colon [0,\tau [\to U$  is an absolutely continuous function such that $x(0) = x_0$, $\varphi\circ x$ is absolutely continuous  and there are  functions  $v,w \in L_{\textnormal{loc}}^2([0,\tau[, \R^d)$ with $v(s)\in\partial \varphi_1(x(s))$ a.e. and $w(s)\in\partial \varphi_2(x(s))$ a.e. such that
 \begin{align}
  F(x(t)) &= F(x_0) - \int_0^t \left(v(s)+w(s)\right) ds, 	\text{ for all } t\in [0,\tau [,\label{eqn_integral_extension_lemma}\\
  	 \rho\int_s^t \|\dot{x}(\tau)\|^2 d\tau&\leq \varphi(x(s))-\varphi(x(t)), \text{ for all } t,s\in [0, \tau[ \text{ with } s< t. \label{integral_prop_lemma}
 \end{align}
 We order $\mathcal{K}$ by $\prec$ as follows: $(x_1,v_1,w_2,\tau_1)\prec (x_2,v_2,w_2,\tau_2)$ if and only if $\tau_1\leq \tau_2$ and
 \begin{equation*}
 	{x_2}(t) = x_1(t), \, v_2(t) = v_1(t) \text{ and } w_2(t)= w_1(t) \text{ for all } t\in [0, \tau_1[.
 \end{equation*}
 \begin{proposition}\label{Prop_Maximal}
 	The set $\mathcal{K}$ has a maximal element, provided that $\mathcal{K}$ is nonempty. 
 	\end{proposition}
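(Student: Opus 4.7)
The plan is a direct application of Zorn's lemma to the partially ordered set $(\mathcal{K},\prec)$. Since $\mathcal{K}$ is assumed nonempty, it suffices to show that every totally ordered chain admits an upper bound in $\mathcal{K}$; the existence of a maximal element then follows at once.

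Let $\mathcal{C}=\{(x_\alpha,v_\alpha,w_\alpha,\tau_\alpha):\alpha\in I\}$ be an arbitrary chain, and set $\tau^{*}:=\sup_{\alpha\in I}\tau_\alpha\in (0,\infty]$. For each $t\in [0,\tau^{*}[$, the definition of the supremum yields some $\alpha\in I$ with $\tau_\alpha>t$. The compatibility built into the order $\prec$ (whenever $(x_\alpha,v_\alpha,w_\alpha,\tau_\alpha)\prec(x_\beta,v_\beta,w_\beta,\tau_\beta)$ and $t<\tau_\alpha$, one has $x_\alpha(t)=x_\beta(t)$ and similarly for $v$ and $w$) makes the following pointwise definition unambiguous:
\begin{equation*}
x^{*}(t):=x_\alpha(t),\qquad v^{*}(t):=v_\alpha(t),\qquad w^{*}(t):=w_\alpha(t),
\end{equation*}
for any $\alpha$ with $\tau_\alpha>t$. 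This produces candidate functions $x^{*},v^{*},w^{*}\colon [0,\tau^{*}[\to\R^d$ together with the terminal time $\tau^{*}$.

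The next step is to verify that $(x^{*},v^{*},w^{*},\tau^{*})\in\mathcal{K}$. Fix any $T<\tau^{*}$ and choose $\alpha\in I$ with $\tau_\alpha>T$. By construction, the restrictions $x^{*}|_{[0,T]}$, $v^{*}|_{[0,T]}$, $w^{*}|_{[0,T]}$ coincide with those of $x_\alpha,v_\alpha,w_\alpha$. Consequently: $x^{*}$ takes values in $U$ and satisfies $x^{*}(0)=x_0$; both $x^{*}$ and $\varphi\circ x^{*}$ are absolutely continuous on $[0,T]$; $v^{*},w^{*}\in L^2([0,T];\R^d)$ with $v^{*}(s)\in\partial\varphi_1(x^{*}(s))$ and $w^{*}(s)\in\partial\varphi_2(x^{*}(s))$ for a.e.\ $s\in [0,T]$; and the integral identity \eqref{eqn_integral_extension_lemma} together with the energy inequality \eqref{integral_prop_lemma} hold on $[0,T]$. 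Since $T<\tau^{*}$ was arbitrary, these local properties upgrade to the corresponding statements on the whole half-open interval $[0,\tau^{*}[$, which is precisely the content required for membership in $\mathcal{K}$.

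Finally, $(x^{*},v^{*},w^{*},\tau^{*})$ is an upper bound for $\mathcal{C}$: for each $\alpha\in I$ and each $t\in [0,\tau_\alpha[$, choosing any $\beta\in I$ with $\tau_\beta>t$ and comparing $\alpha$ and $\beta$ via $\prec$ (which is possible since $\mathcal{C}$ is totally ordered) gives $x^{*}(t)=x_\alpha(t)$, and similarly for $v^{*}$ and $w^{*}$, so $(x_\alpha,v_\alpha,w_\alpha,\tau_\alpha)\prec(x^{*},v^{*},w^{*},\tau^{*})$. Zorn's lemma then produces a maximal element of $\mathcal{K}$. There is no serious analytical obstacle in this argument; the only point deserving attention is to ensure that the defining properties of $\mathcal{K}$, stated on the half-open interval $[0,\tau^{*}[$, indeed follow from their validity on every compact subinterval $[0,T]\subset [0,\tau^{*}[$, which is clear since absolute continuity, local $L^2$-integrability, the pointwise subdifferential inclusions, and both integral relations are local in the time variable.
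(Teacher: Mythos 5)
Your proposal is correct and follows essentially the same route as the paper: form the pointwise-defined limit of a chain via $\tau^{*}=\sup_i\tau_i$, use the compatibility built into $\prec$ for well-definedness, check membership in $\mathcal{K}$ (you spell out the compact-interval localization that the paper leaves implicit), and conclude by Zorn's lemma.
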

 \begin{proof}
 	  Consider an arbitrary chain in $\mathcal{K}$, that is, a subset $\{(x_i,v_i,w_i,\tau_i):i\in I\}$ totally ordered, our goal is to prove it has a upper bound. Indeed, in fact we can define $\tau := \sup_{i\in I}\tau_i$ and $x\colon [0,\tau[\to {U}$ and $v,w \colon  [0,\tau[\to \mathbb{R}^d$  as
 	  \begin{equation*}
 	  	x(t) = x_i(t) \text{ if } t<\tau_i, \quad v(t) = v_i(t) \text{ if } t<\tau_i, \quad  w(t)	= w_i(t) \text{ if } t<\tau_i.
 	  \end{equation*}
 	It follows that they are well defined due to $\{(x_i,v_i,w_i,\tau_i)\}$ is a chain. Moreover, it is easy to see that $x,v,w$ satisfy   \eqref{eqn_integral_extension_lemma} and \eqref{integral_prop_lemma} on  $[0, \tau[$  since every $x_i$ satisfies this in the interval $[ 0, \tau_i[$. So, by using Zorn's lemma, $\mathcal{K}$ has a maximal element.
 	\end{proof}
\vspace{-0.3cm}

\subsection{Proof of Theorem \ref{main_theorem_prox_reg}}\label{sec-ap-B} 
		    Since $F$ satisfies \ref{FixH4}, by Lemma \ref{prop-s-m-r}, $F$ is strongly metrically regular at every point $(x,F(x))$ with $x\in \Omega$. Since  $\varphi_1$ is plr at $x_0$, there exists   constants $c,s_0>0$ such that the assumptions of Lemma \ref{envelope_lemma} holds. Particularly, $\varphi$ is $c$-plr on $\mathbb{B}_{s_0}(x_0)$. Moreover,  we can shrink $s_0>0$ such that $\mathbb{B}_{s_0}(x_0)\subset \Omega$, $\inf\{\varphi(x):x\in \mathbb{B}_{s_0}(x_0)\}>-\infty$, \eqref{hypo_plr} holds on $\mathbb{B}_{s_0}(x_0)$ for some $\hat{c}\geq 0$ and $\varphi_2$ is Lipschitz on $\mathbb{B}_{s_0}(x_0)$.  \\
				Then,  we apply  Lemma \ref{envelope_lemma} to $\varphi_1$ with any point $(\bar x, \bar y) \in \gph  \partial \varphi_1$ then it yields the existence of $\lambda_0$ such that the conclusions of Lemma \ref{envelope_lemma} holds. In what follows we  are going to use the notation introduced in the aforementioned result. Particularly, we represent by $e_\lambda \bar{\varphi}_1$ the Moreau envelope of 	
    \begin{equation*}
			\bar{\varphi }_1(\cdot)=\varphi_1(\cdot)+\delta\left(\cdot, \mathbb{B}_{s_0/2}\left[\bar x\right]\right)\text{, }  r_0 := \frac{s_0}{18}.
		\end{equation*}
				We also consider a smooth mollifier $\psi\colon\R^d\to\R$ such that $\text{supp }\psi\subset \mathbb{B}$ and then we define the function 
		\begin{equation*}
			\varphi_2^\lambda(x) := \int_{\R^d} \varphi_2(x-z)\cdot \frac{1}{\lambda^d}\psi\left(\frac{z}{\lambda}\right)dz = \int_{\R^d}\varphi_2(x-\lambda z)\psi(z)dz.
		\end{equation*}
		To simplify the notation,  we define the set $U:=\mathbb{B}_{r_0/2}[x_0]$ and  the function $\varphi_\lambda :=  e_\lambda\bar{\varphi}_1+\varphi_2^\lambda$.
If it is necessary, we can shrink $\lambda_0$ in order to have
\begin{equation}\label{eqn-grad-varphi2}
    \sup_{\lambda\in ]0,\lambda_0[}\sup_{x\in U}\|\nabla\varphi_2^\lambda(x)\|\leq L,
\end{equation}
for some $L>0$, which is possible since $\varphi_2$ is Lipschitz on $\mathbb{B}_{s_0}(x_0)$. Note that $\varphi_\lambda\in \mathcal{C}^{1,+}$ on $U$. Now, for each $\lambda \in ]0, \lambda_0[$, we can employ Lemma \ref{lemma_local_solution} and Proposition \ref{Prop_Maximal} to get an element  $(x_\lambda, v_\lambda,w_\lambda, \tau_\lambda) \in \mathcal{K}_\lambda:=\mathcal{K}_{e_\lambda \bar{\varphi}_1, \varphi_2^\lambda, F,U}(x_0)$ which is maximal. In this case, by the choice of $r_0$ we have that $ v_\lambda(t) = \nabla  e_\lambda\bar{\varphi}_1 (x_\lambda(t))$  and $ w_\lambda(t) = \nabla \varphi_2^\lambda (x(t))$, so $v_\lambda(t) + w_\lambda (t)= 	\nabla \varphi_\lambda (x_\lambda(t))$. Using \ref{FixH4}, we get that  for almost all $t\in [0,\tau_\lambda[$, 
		\begin{equation*}
			\begin{aligned}
				\rho\|\dot{x}_\lambda(t)\|^2 \leq \langle -\nabla e_{\lambda}\bar{\varphi}_1(x_\lambda(t))-\nabla \varphi_2^\lambda(x_\lambda(t)),\dot{x}_\lambda(t) \rangle
				\leq -\frac{d}{dt}(e_\lambda\bar{\varphi}_1(x_\lambda(t)))+L\|\dot{x}_\lambda(t)\|.
			\end{aligned}
		\end{equation*} 
		Integrating the last inequality, we obtain that for all $\eta<\tau_\lambda$ 
		\begin{equation*}
			\begin{aligned}
				\rho\int_0^\eta\|\dot{x}_\lambda(t)\|^2dt \leq& \ e_{\lambda}\bar{\varphi}_1(x_0)-e_{\lambda}\bar{\varphi}_1(x_\lambda(\eta)) + L\int_0^\eta\|\dot{x}_\lambda(t)\|dt\\
				\leq& \ \varphi_1(x_0) -\inf_{y\in\R^d} e_{\lambda}\bar{\varphi}_1(y) + L\sqrt{\eta}\left(\int_0^\eta\|\dot{x}_\lambda(t)\|^2dt\right)^{1/2}\\
				=& \ \varphi_1(x_0) - \inf_{ \mathbb{B}_{s_0}(x_0)    } \varphi_1+  L\sqrt{\eta}\left(\int_0^\eta\|\dot{x}_\lambda(t)\|^2dt\right)^{1/2},
			\end{aligned}
		\end{equation*}
	 where we have used Hölder's inequality.  From Lemma \ref{numbers_lemma} we conclude
	\begin{equation}\label{eq_bounded}
		\left(\int_0^\eta\|\dot{x}_\lambda(t)\|^2dt\right)^{1/2}\leq   \kappa(\eta), \text{ 	where $\kappa(\eta):= \frac{L\sqrt{\eta}}{\rho} + \sqrt{\frac{1}{\rho}\left(\varphi_1(x_0) -\inf_{ \mathbb{B}_{s_0}(x_0)    } \varphi_1 \right) } $}.
	\end{equation}
		
		\noindent\textbf{Claim 1:} $\liminf\limits_{ \lambda \to 0} \tau_\lambda>0$.\\
\noindent \emph{Proof of Claim 1:} Indeed, suppose by contradiction that $\liminf_{ \lambda\to 0} \tau_\lambda = 0$. Then, there exists a subsequence $\lambda_j \to 0$, such that $\tau_{\lambda_j} \to 0$. Then, we fix $j \in\mathbb{N}$, by using \eqref{eq_bounded}, we notice that  for all $s,t\in [0,\tau_{\lambda_j}[$ with $t\geq s$
		   \begin{equation}\label{equi}
		   		\|x_{\lambda_j}(t)-x_{\lambda_j}(s)\|\leq\int_s^t \|\dot{x}_{\lambda_j}(t')\|dt' \leq \sqrt{t-s}\left(\int_0^t\|\dot{x}_{\lambda_j}(t')\|^2dt'\right)^{1/2}\leq \kappa(\tau_{\lambda_j}) \sqrt{t-s} .
		   \end{equation}
		  Since $\varsigma :=\sup_{j\in\N}\kappa(\tau_{\lambda_j})<\infty$, there exists $\lim_{t\nearrow \tau_{\lambda_j}}x_{\lambda_j}(t)$. We define 
		  \begin{equation*}
		  	 \hat{x}_j := \lim_{t\nearrow \tau_{\lambda_j}}x_{\lambda_j}(t) \text{ and  }   \hat{y}_j := F(x_0) + \int_0^{\tau_{\lambda_j} } \nabla \varphi_{\lambda_j} (x_{\lambda_j}(s)) ds.
		  \end{equation*}
		For \eqref{equi}, for all $j\in\N$, we have $\|\hat{x}_j-x_0\|\leq\varsigma \sqrt{\tau_{\lambda_j}}$. Since $\tau_{\lambda_j}\to 0$, for $i$ big enough, $\hat{x}_i\in \mathbb{B}_{r_0/4}(x_0)$, and since $F$ is locally Lipschitz,  $\hat{y}_i = F( \hat{x}_i )$. Since, $\hat{x}_i  \in \Omega$, we have that $F$ is strongly metrically regular at $(\hat{x}_i, \hat{y}_i) $. Therefore, applying Lemma \ref{lemma_local_solution}, we can extend the function $x_{\lambda_i}$ into $U$, which contradicts the maximality of this element, and that concludes the proof of the claim. 
 
 \noindent \textbf{Claim 2:} There is $\hat{\lambda}\in ]0, \lambda_0[$, such that $(x_{\lambda})_{ \lambda < \hat \lambda}$ is equicontinuous  and pointwise bounded family of functions, with  $(\dot{x}_\lambda)_{ \lambda < \hat \lambda}$,  $(v_\lambda)_{ \lambda < \hat \lambda}$ and $(w_\lambda)_{ \lambda < \hat \lambda}$ bounded in $L^2([0, \beta], \R^d)$.\\
 \noindent \emph{Proof of Claim 2:} Let us consider $\hat{\lambda}>0$ and $\beta >0$ with $\inf_{\lambda < \hat{\lambda}} \tau_{\lambda}> \beta$. First, using \eqref{equi}, $(x_\lambda)$ is an equicontinuous and pointwise bounded family. Second,  let us notice that by \eqref{eq_bounded} the family of functions $\dot{x}_\lambda$, with $\lambda < \hat{\lambda}$ is bounded  in $L^2([0,\beta], \R^d)$. Moreover,  since $x_\lambda([0,\beta])\subset U$ and \ref{FixH3} holds, we can shrink even more $s_0$ in order to $F$ is Lipschitz on $U$. Then, one of the following conditions holds
 \begin{equation*}
 		\|\nabla e_\lambda \bar{\varphi}_1(x_\lambda(t))+\nabla \varphi_2^\lambda(x_\lambda(s))\|\leq  \Tilde{L} \|\dot{x}_\lambda(t)\| \text{ a.e. on }[0,\beta]
 \end{equation*}
 for some $\Tilde{L}>0$. Therefore, joint with \eqref{eqn-grad-varphi2}, we can assume that the family of measurable functions $v_\lambda(t) $ and $w_\lambda(t)$ are bounded in $L ^2 ([0,\beta], \R^d  )$.\\
 	\noindent\textbf{Claim 3:} There are subsequences $\lambda_n \to 0$,  an absolutely continuous function $x\colon [0,\beta[ \to \Omega$ and $v , w \in L_{\text{loc}}^2([0,\beta], \R^d)$ such that $x_{\lambda_n} \to x$ uniformly, and $\dot{x}_{\lambda_n}\rightharpoonup  u$, $v_{\lambda_n}\rightharpoonup  v$ and $w_{\lambda_n} \rightharpoonup w$ in $L^2([0,\beta];\R^d)$.\\
  \noindent \emph{Proof of Claim 3:} By virtue of Arzela-Ascoli's Theorem, it has a subsequence converging uniformly to a continuous function $x\colon [0,\beta]\to \R^d$. Furthermore, by boundedness of $(\dot{x}_\lambda)$, $(v_\lambda)$ and $(w_\lambda)$ in $L^2([0, \beta], \R^d)$ we  can extract weakly convergent subsequences. Thus, we consider $x_{\lambda_n} \to x$  uniformly $[0,\beta]$ and $\dot{x}_{\lambda_n}\rightharpoonup  u$, $v_{\lambda_n}\rightharpoonup  v$ and $w_{\lambda_n} \rightharpoonup w$ in $L^2([0,\beta];\R^d)$.\\
 	Finally,  since $x_\lambda$ is absolutely continuous for all $\lambda\in ]0,\hat{\lambda}[$,
 	\begin{equation*}
 		\forall t,s\in [0,\tau], \forall n\in\N: x_{\lambda_n}(t)-x_{\lambda_n}(s) = \int_s^t \dot{x}_{\lambda_n}(\eta) d\eta.
 	\end{equation*}
 	By taking the limit when $n\to\infty$ it yields $\forall t,s\in [0,\beta]: x(t)-x(s) = \int_s^t u(\eta) d\eta$, the above equality implies that $x$ is absolutely continuous and $u = \dot{x}$. 
 	
 	\noindent\textbf{Claim 4:} The functions  $x$, $v $ and $ w$ satisfy  \eqref{eqn_integral_extension_lemma}. \\
\noindent  	\emph{Proof of Claim 4:} On the other hand,     for all $t\in [0,\beta] $ and all $ n\in\N$ we have 
		\begin{equation*}
			 F(x_{\lambda_n}(t)) = F(x_0)-\int_{0}^t  \left( v_{\lambda_n} (s) + w_{\lambda_n} \right(s))ds.
		\end{equation*}
		Letting $n\to\infty$ taking into account the uniform convergence of $x_{ \lambda_n}$, the weak convergence of the sequences $v_{\lambda_n}$, $w_{\lambda_n}$ and $F$ is locally Lipschitz we have
		\begin{equation*}
			\forall t\in [0,\beta]:F(x(t))= F(x_0)-\int_{0}^t \left( v_1(s)+v_2(s)\right)ds.
		\end{equation*}
		\noindent\textbf{Claim 5:} The functions  $x$, $v $ and $ w$ satisfy that a.e. $v(t) \in \partial \varphi_1(x(s))$ and $w(t) \in {\partial } \varphi_2(x(s))$.\\
\noindent\emph{Proof of Claim 5:} Indeed, on the one hand, using Lemma \ref{env_e}, we get that   for all $n\in\N$ and $t\in [0,\beta]$, $v_{\lambda_n}(t)\in  {\partial} \varphi_1(P_{\lambda_n}\bar{\varphi}_1(x_{\lambda_n}(t)))$. From \cite[Lemma 1.9]{MR2252239} it follows that $v(t)\in \partial \varphi_1(x(t))$ a.e. on $[0,\beta]$. On the other hand, since $\{\nabla \varphi_2^{\lambda_n}(x_{\lambda_n}(t)): n\in \N\}$ is bounded for all $t\in [0,\beta]$, we can use \cite[Proposition 6.6.33]{MR2527754}, so  
        \begin{equation*}
            w(t)\in \ov{\operatorname{co}}(\limsup_{n\to \infty} \{\nabla \varphi_2^{\lambda_n}(x_{\lambda_n}(t))\}), \ \text{a.e. } t\in [0,\beta].
        \end{equation*}
       Now, for a fixed $t\in [0,\beta]$, we observe that 
       \begin{equation*}
           \ov{\operatorname{co}}(\limsup_{n\to \infty} \{\nabla \varphi_2^{\lambda_n}(x_{\lambda_n}(t))\})\subset \ov{\operatorname{co}} \ G(x(t)),
       \end{equation*}
        where $G(y) := \{\lim_{n\to\infty} \nabla \varphi_2^{\lambda_n}(y_{\lambda_n}):y_{\lambda_n}\to y \text{ and }\lambda_n \to 0\}$. By \cite[Theorem 9.67]{MR1491362}, we have $\operatorname{co}G(x(t)) = \operatorname{co}\partial \varphi_2(x(t))$ it follows that 
        \begin{equation*}
            \ov{\operatorname{co}}(\limsup_{k\to \infty} \{\nabla \varphi_2^{\lambda_n}(x_{\lambda_n}(t))\})\subset \ov{\operatorname{co}} \ G(x(t))=\ov{\operatorname{co}}\ \partial \varphi_2(x(t)) = \partial \varphi_2(x(t)).
        \end{equation*}
        Then, we conclude a.e. $w(t)\in\partial \varphi_2(x(t))$.\\
   \noindent\textbf{Claim 6:} The function  $\varphi \circ x$ is absolutely continuous.\\   
\noindent 	\emph{Proof of Claim 6:} In fact, first we note that $\varphi_2\circ x$ is   absolutely continuous since $\varphi_2$ is locally Lipschitz. Hence, we focus on  $\varphi_1\circ x$. We can see that $x(t)\in U=\mathbb{B}_{r_0}[x_0]$ for all $t\in [0,\beta]$, which implies that $P_\lambda\bar{\varphi}_1(x(t)) \in \mathbb{B}_{s_0}(x_0)$ for all $\lambda < \lambda_0$. Then, using  \eqref{hypo_plr} we  yield for every $\lambda\in ]0,\lambda_0[$ and a.e. $s\in [0,\beta]$
\begin{equation*}
    \begin{aligned}
        \langle \nabla e_\lambda\bar{\varphi}_1(x(s))+v (s),-\nabla e_\lambda\bar{\varphi}_1(x(s)) \rangle
            =& \ \frac{1}{\lambda}\langle \nabla e_\lambda\bar{\varphi}_1(x(s))-(-v (s)),P_\lambda\bar{\varphi}_1(x(s))-x(s) \rangle\\ 
   \geq & \ \frac{-\hat{c}}{\lambda}(1+\|\nabla e_\lambda\bar{\varphi}_1(x(s))\|+\|v (s)\|)\|P_\lambda\bar{\varphi}_1(x(s))-x(s)\|^2\\
   =& -\hat{c}(1+\|\nabla e_\lambda\bar{\varphi}_1(x(s))\|+\|v (s)\|)\lambda\|\nabla e_\lambda\bar{\varphi}_1(x(s))\|^2.
        \end{aligned}
\end{equation*}
	Now, by definition of Moreau envelope and \ref{FixH5}, we have
		\begin{equation*}
			\begin{aligned}
				\lambda\|\nabla e_\lambda\bar{\varphi}_1(x(s))\|^2 \leq & \ \varphi_1(x(s))-\varphi_1(P_\lambda\bar{\varphi}_1(x(s)))
                =\ \varphi(x(s)) + \varphi_2(x(s)) - \varphi_1(P_\lambda\bar{\varphi}_1(x(s)))\\
				\leq & \ \varphi(x_0)+\varphi_2(x(s)) +\alpha(\|P_\lambda\bar{\varphi}_1(x(s))\| + 1)
    -\varphi_2(P_\lambda\bar{\varphi}_1(x(s)))\\
                \leq & \ \varphi(x_0) + L\|x(s)-P_\lambda\bar{\varphi}_1(x(s))\|+\alpha(\|P_\lambda\bar{\varphi}_1(x(s))\| + 1),
			\end{aligned}
		\end{equation*}
  as we know $P_\lambda\bar{\varphi}_1(x(s))\to x(s)$ for all $s\in [0,\beta]$ and for all $\lambda\in ]0,\lambda_0[$, $P_\lambda \bar{\varphi}_1$ is $k_0$-Lipschitz according Lemma \ref{env_b} on $\mathbb{B}_{r_0}(x_0)$, we conclude the right hand side of the last inequality is bounded uniformly, i.e. we can say there is $\varrho>0$ such that 
  \begin{equation*}
      \sup_{\lambda\in ]0,\lambda_0[}\sup_{s\in [0,\beta]}\lambda \|\nabla e_\lambda\bar{\varphi}_1(x(s))\|^2\leq \varrho,
  \end{equation*}
  hence we have for a.e. $s\in [0,\beta]$ and $\lambda\in ]0,\lambda_0[$
		\begin{equation*}
			\|\nabla e_\lambda\bar{\varphi}_1(x(s))\|^2 - (\|v (s)\| + \hat{c}\varrho)\|\nabla e_\lambda\bar{\varphi}_1(x(s))\| - \hat{c}\varrho(\|v (s)\|+1)\leq 0.
		\end{equation*}
		From Lemma \ref{numbers_lemma}, it follows that there is a constant $\zeta>0$ independent of $s\in [0,\beta]$ and $\lambda\in ]0,\lambda_0[$
		\begin{equation*}
			\|\nabla e_\lambda\bar{\varphi}_1(x(s))\|\leq \zeta(\|v (s)\|+1), \text{ a.e. } s\in [0,\beta].
		\end{equation*}
  Since $v \in L^2([0,\beta];\R^d)$, we have $(\nabla e_\lambda\bar{\varphi}_1(x(s)))_{\lambda}$ is a bounded family in $L^2([0,\beta];\R^d)$, then we can pass to a weakly convergent subsequence, let us say, $\nabla e_{\lambda_k}\bar{\varphi}_1(x(s))\rightharpoonup  w$ in $L^2([0,\beta];\R^d)$ for some $\lambda_k\searrow 0$. Then we have for all $s,t\in [0,\beta]$
		\begin{equation*}
			e_{\lambda_k}\bar{\varphi}_1(x(t))-e_{\lambda_k}\bar{\varphi}_1(x(s)) = \int_s^t\langle \nabla e_{\lambda_k}\bar{\varphi}_1(x(\tau)),\dot{x}(\tau)\rangle d\tau,
		\end{equation*}
		so using that $\dot{x}\in L^2([0,\beta];\R^d)$, the weak convergence and Lemma \ref{env_g}, we get
		\begin{equation*}
			\varphi_1(x(t))-\varphi_1(x(s)) = \int_s^t \langle w(\tau),\dot{x}(\tau)\rangle d\tau,
		\end{equation*} 
		it proves that $\varphi_1\circ x$ is absolutely continuous on $[0,\beta]$.
		
			\noindent\textbf{Claim 7:} The function $x$ satisfies \eqref{integral_prop_lemma} on $[0,\beta]$.\\
\noindent 		\emph{Proof of Claim 7:} For all $n \in \mathbb{N}$ and all $s,t \in [0,\beta] $ with $s < t$, the following inequality holds
    \begin{equation*}
        \begin{aligned}
			\rho\int_s^t \|\dot{x}_{\lambda_n}(\tau)\|^2 d\tau\leq & \ \varphi_{\lambda_n}(x_{\lambda_n}(s))-\varphi_{\lambda_n}(x_{\lambda_n}(t)) \\= & \  \varphi_1   (P_{\lambda_n}\bar{\varphi}_1(x_{\lambda_n}(s))   ) +   \frac{1}{2\lambda_n}  \| x_{\lambda_n}(s) - P_{\lambda_n}\bar{\varphi}_1(x_{\lambda_n}(s)) \|^2\\&+ \varphi_2^{\lambda_n}(x_{\lambda_n}(s) )  - \varphi_{\lambda_n}(x_{\lambda_n}(t)) \\
			=& \  \varphi_1   (P_{\lambda_n}\bar{\varphi}_1(x_{\lambda_n}(s))   ) +   \frac{\lambda_n}{2}  \|\nabla e_{\lambda_n}\bar{\varphi}_1(x_{\lambda_n}(s))  \|^2\\
			&+ \varphi_2^{\lambda_n}(x_{\lambda_n}(s) )  - \varphi_{\lambda_n}(x_{\lambda_n}(t)).
		\end{aligned}
    \end{equation*}
		Since $ P_{\lambda_n}\bar{\varphi}_1(x_{\lambda_n}(\tau))) \to x(\tau) $ for all $\tau\in [0,\beta]$, we conclude by Lemma \ref{env_g} applied to $\varphi_1$, the uniform convergence of $\varphi_2^{\lambda_n}$ to $\varphi_2$ and the weak convergence of $(\dot{x}_{\lambda_n})$ that
			\begin{equation}
				\begin{aligned}\label{ineq_intets}
						\rho\int_s^t \|\dot{x}(\tau)\|^2 d\tau\leq&  \liminf_{n\to \infty} \left(  \varphi_1   (P_{\lambda_n}\bar{\varphi}_1(x_{\lambda_n}(s))   ) +  \frac{\lambda_n}{2}  \|\nabla e_{\lambda_n}\bar{\varphi}_1 (x_{\lambda_n}(s))  \|^2  \right)\\
					&	+  \varphi_2(x(s) )  - \varphi (x (t)).
				\end{aligned}
		\end{equation}
		Now, using Claim 2, we have the sequence  $(\nabla e_{\lambda_n}\bar{\varphi}_1(x_{\lambda_n}(\tau)))$ is  bounded   in $L^2([0,\beta], \R^d)$ for all $\tau\in [0,\beta]$. Then, by Fatou's Lemma, we have that 
		\begin{equation*}
			\int_0^\beta \liminf_{n \to \infty} \| \nabla e_{\lambda_n}\bar{\varphi}_1(x_{\lambda_n}(\tau)) \| d\tau  \leq \liminf_{n \to \infty} \int_0^\beta \| \nabla e_{\lambda_n}\bar{\varphi}_1(x_{\lambda_n}(\tau))\| d\tau.
		\end{equation*}
		It implies that there exists a measurable set $B \subset [0,\beta]$ of full measure such that 
		\begin{equation*}
			 \liminf_{n \to \infty} \| \nabla e_{\lambda_n}\bar{\varphi}_1(x_{\lambda_n}(t)) \|<\infty \text{ for all } t \in B.
		\end{equation*}
   Now, for each $\tau \in B$, we can take a bounded subsequence (which depends on $\tau$), let  us say,  $\| \nabla e_{\lambda_{n_k}}\bar{\varphi}_1(x_{\lambda_{n_k}}(\tau)) \| \leq M_\tau $ for all $k \in \mathbb{N}$. Hence, using the definition of plr function, we get that
   \begin{equation*}
       \begin{aligned}
   	\varphi_1(x(\tau)) \geq & \ \varphi_1 (P_{\lambda_{n_k}}\bar{\varphi}_1(x_{\lambda_{n_k}}(\tau))   )  +\langle  \nabla e_{\lambda_{n_k}}\bar{\varphi}_1(x_{\lambda_{n_k}}(\tau)) , x(\tau)-P_{\lambda_n}\bar{\varphi}_1(x_{\lambda_n}(\tau))    \rangle\\ &-c(1+\| \nabla e_{\lambda_{n_k}}\bar{\varphi}_1(x_{\lambda_{n_k}}(\tau)) \|)\|x(\tau)-P_{\lambda_n}\bar{\varphi}_1(x_{\lambda_n}(\tau))\|^2  \\
   	\geq & \ \varphi_1(P_{\lambda_{n_k}}\bar{\varphi}_1(x_{\lambda_{n_k}}(\tau)))    - M_\tau \|x(\tau)-P_{\lambda_n}\bar{\varphi}_1(x_{\lambda_n}(\tau))\|  \\
   	&- c(1 + M_\tau)\|x(\tau)-P_{\lambda_n}\bar{\varphi}_1(x_{\lambda_n}(\tau))\|^2.
   \end{aligned}
   \end{equation*}
   Hence, 
   \begin{equation*}
   	\liminf_{k\to \infty}   \varphi_1   (P_{\lambda_{n_k}}\bar{\varphi}_1(x_{\lambda_{n_k}}(\tau))) \leq \varphi_1(x(\tau)).
   \end{equation*}
   Therefore, for all $\tau\in B$
   \begin{equation*}
       \begin{aligned}
  	&\liminf_{n\to \infty} (  \varphi_1   (P_{\lambda_n}\bar{\varphi}_1(x_{\lambda_n}(\tau))   ) +  \frac{\lambda_n}{2}  \|\nabla e_{\lambda_{n}}\bar{\varphi}_1( x_{\lambda_n}(\tau))  \|^2  )\\
  	 \leq & \liminf_{k\to \infty} (  \varphi_1   (P_{\lambda_{n_k}}\bar{\varphi}_1(x_{\lambda_{n_k}}(\tau))   ) +  \frac{\lambda_{n_k}}{2}  \|\nabla e_{\lambda_{n_k}}\bar{\varphi}_1 (x_{\lambda_{n_k}}(\tau))  \|^2  )\\ 
  	 \leq &\liminf_{k\to \infty} (  \varphi_1   (P_{\lambda_{n_k}}\bar{\varphi}_1(x_{\lambda_{n_k}}(\tau))   ) +  \frac{\lambda_{n_k}}{2}  M_\tau^2  )\\
  	  =& \liminf_{k\to \infty} (  \varphi_1   (P_{\lambda_{n_k}}\bar{\varphi}_1(x_{\lambda_{n_k}}(\tau))   ) ) \leq \varphi_1(x(\tau)).
  	\end{aligned}
   \end{equation*}
   Replacing the above inequality in \eqref{ineq_intets}, we get that for all $t\in [0,\beta]$ and all $s\in B$  with $s < t$,
   	\begin{equation*}
   	\rho\int_s^t \|\dot{x}(\tau)\|^2 d\tau \leq   \varphi_1(x(s) ) +  \varphi_2(x(s) )  - \varphi (x (t)) = \varphi (x (s))-\varphi (x (t)).
   \end{equation*}
  From Claim 5, the map  $t \mapsto \varphi \circ x(t)$ is continuous (in fact, absolutely continuous). Therefore,  the last equality holds for all $s <t$, which concludes the proof of the claim.\\
			\noindent\textbf{Claim 8:} The statement of the theorem holds.\\
	 	 \noindent \emph{Proof of Claim 8:} By the previous claims, we have that 	$\mathcal{K}_{\varphi_1, \varphi_2, F,\Omega}(x_0)$ is nonempty. Hence, Proposition \ref{Prop_Maximal} implies the existence of a maximal element $(x,v,w,\tau) \in \mathcal{K}_{\varphi_1, \varphi_2, F,\Omega}(x_0)$. Let us show that $\tau = \infty$. Indeed, if $\tau<\infty$, by using \eqref{integral_prop_lemma} one has for all $t<\tau$
    \begin{equation*}
        \begin{aligned}
    \|x(t)-x_0\|^2 &\leq \left(\int_0^t\|\dot{x}(s)\|ds\right)^2\leq t\int_0^t\|\dot{x}(s)\|^2ds\\
    &\leq \tau(\varphi(x_0)-\varphi(x(t)))\leq \tau(\varphi(x_0) +\alpha(\|x(t)\|+1)),
        \end{aligned}
    \end{equation*}
where it has been used the condition \ref{FixH5}. Since the above inequality is valid for all $t\in [0,\tau[$, we conclude that $r:=\sup_{t<\tau}\|x(t)\|<\infty$, then if $t,s\in [0,\tau[$
\begin{equation*}
    \|x(t)-x(s)\|\leq \sqrt{|t-s|}\left(\int_s^t\|\dot{x}(t')\|^2dt'\right)^{1/2}\leq \tau(\varphi(x_0) +\alpha(r+1))\sqrt{|t-s|},
\end{equation*}
it follows that $\lim_{t\nearrow \tau}x(t)$ exists. We can argue in a similar way as Claim 1, by taking
		  \begin{equation*}
		 	\hat{x}_0 := \lim_{t\nearrow \tau}x(t) \text{ and  }   \hat{y}_0 := F(x_0) + \int_0^{\tau } \left( v(s) + w(s)  \right)     ds.
		 \end{equation*}
		 Using  \eqref{integral_prop_lemma} we get that $	\hat{x}_0  \in \Omega$, and $ \hat{y}_0 = F(\hat{x}_0)$. Hence, applying the previous construction, we can extend $x,v,w$ to some interval $[0,\hat{\tau} ]$ with $\tau < \hat{\tau}$ satisfying all the properties of the functions in $\mathcal{K}_{\varphi_1, \varphi_2, F,\Omega}(x_0)$, which contradicts the maximality, then it proves the claim.\\
\noindent\textbf{Claim 9:} If either \ref{Uniq1} or \ref{Uniq2} holds, the system \eqref{general_system} has an unique solution.\\
   \noindent \emph{Proof of Claim 9:} Take two trajectories satisfying \eqref{general_system}, let us say, $x_1,x_2\colon \R_+\to\R^d$ which are absolutely continuous. Since $F$ is locally Lipschitz, from Proposition \ref{equivalence_prop}, there are measurable functions $v_i\colon \R_+\to \R^d$ such that for a.e. $t\geq 0$, $v_i(t)\in\partial\varphi(x_i(t))$ and 
		\begin{equation}\label{inclusion-asd}
			F(x_i(t))=F(x_0) -\int_0^tv_i(s)ds \quad \textrm{ for }i\in \{1,2\}.
		\end{equation}
		Take $\mathscr{G}\subset \R_+$ the set of points where \eqref{inclusion-asd} holds, we know that $\R_+\setminus \mathscr{G}$ is a null set. Proceeding by contradiction, let us assume that $x_1\neq x_2$. We set $T:=\inf\{s\in \mathscr{G}:x_1(s)\neq x_2(s)\}$. It is clear that $T\geq 0$, and by our assumption we have $T<\infty$. Since $\mathscr{G}$ is dense on $\R_+$ we have $x_1(t) = x_2(t)$ for all $t\leq T$. Define $\bar{x}:=x_1(T)=x_2(T)\in \Omega$.\\
      Suppose that $(a)$ holds, since $\varphi$ is plr at $\bar{x}$, there is $\delta>0$ such that \eqref{hypo_plr} is satisfied on $\mathbb{B}_{\delta}(\bar{x})$ for some $c\geq 0$. We have there is $\varepsilon>0$ such that for all $t\in [T,T+\varepsilon]$, $x_i(t)\in \mathbb{B}_{\delta}(\bar{x})$. Since $DF$ is $\rho$-lower-definite, we take a positive definite matrix $H$ such that $F = H^2$. Thus, for a.e. on $t\in [T,T+\varepsilon]$ using \eqref{hypo_plr} one has
      \begin{equation*}
          \begin{aligned}
            -\frac{d}{dt}(\|Hx_1(t)-Hx_2(t)\|^2)=& \ 2\langle -F\dot{x}_1(t)+F\dot{x}_2(t),x_1(t)-x_2(t)\rangle\\
   \geq & \ -2c(1+\|F\dot{x}_1(t)\|+\|F\dot{x}_2(t)\|)\|x_1(t)-x_2(t)\|^2.
		\end{aligned}
      \end{equation*}
Since $F$ is linear, $L:=\sup_{\|v\|\leq 1}\|F(v)\|<\infty$, then $\|F\dot{x}_i(t)\|\leq L\|\dot{x}_i(t)\|$ for $i\in \{1,2\}$. Integrating the last inequality, we conclude that for all $t\in [T,T+\varepsilon]$
\begin{equation*}
    \begin{aligned}
\|x_1(t)-x_2(t)\|^2
   \leq & \frac{2c}{\lambda_{\text{min}}(H)^2} \int_T^{t} (1+L(\|\dot{x}_1(s)\|+\|\dot{x}_2(s)\|))\|x_1(s)-x_2(s)\|^2 ds,
		\end{aligned}
\end{equation*}
where $\lambda_{\text{min}}(H)$ denotes the smallest eigenvalue of $H$. For $i\in\{1,2\}$ we have $\dot{x}_i\mathbbm{1}_{[T,T+\varepsilon]}$ belongs to $ L^1(\R_+;\R^d)$, by using Gronwall's Lemma, we conclude that $x_1=x_2$ on $[T,T+\varepsilon]$, but it contradicts the definition of $T$. \\
Now, suppose $(b)$ holds. From \eqref{inclusion-asd}, we have for all $t\geq T$, $F(x_i(t)) = F(\bar x) - \int_T^t \nabla\varphi(x_i(s))ds$. Since $F$ is strongly metrically regular at $(\bar x, F(\bar x))$, we can apply Lemma \ref{lemma_local_solution} in order find $\delta>0$ such that the system \begin{equation*}
    F(w(t)) = F(\bar x) - \int_0^t \nabla\varphi(w(t)) ds, w(0) = \bar x
\end{equation*}
has an unique absolutely continuous solution  on $[0,\delta[$. Since $x_1(T+\cdot)$ and $x_2(T+\cdot)$ solve the system, we have $x_1(t) = x_2(t)$ on $[T,T+\delta[$, which contradicts the definition of $T$.\\
The conclusion in both cases is that $T=\infty$; thus, we have uniqueness. \qed

\vspace{-0.2cm}
\section{Proof of the Asymptotic Results}

 \subsection{Proof of Theorem \ref{asym_behavior}}\label{theorem_asym_behavior}
 $(a)$: It is well-known that $\omega(x) = \bigcap_{t\geq 0}\overline{x([t,\infty[)}$. Hence, for any continuous and bounded trajectory $x(\cdot)$, the set $\omega(x)$ is nonempty, compact, and connected. \\
 $(b)$: We observe that $x(\cdot)$ is an energetic solution and, by virtue of \ref{FixH1}, the trajectory $x(\cdot)$ remains in $\Omega$. Let $x^{\ast}\in \omega(x)$. By definition, there exists a sequence $t_k\to \infty$ such that $x(t_k)\to x^{\ast}$. Since $x(\cdot)$ is an energetic solution, there exists $\rho>0$ such that
 $$
 \rho \int_0^{t_k}\Vert \dot{x}(\tau)\Vert^2 d\tau \leq \varphi(x_0)-\varphi(x(t_k)) \quad \textrm{ for all } k\in \mathbb{N},
 $$
 which, by the lower semicontinuity of $\varphi$ (see assumption \ref{FixH2}), implies that $\dot{x}\in L^2(\mathbb{R}_+;\mathbb{R}^d)$. Hence, we can choose a subsequence $(t_{\nu(k)})_k$ such that $t_{\nu(k+1)}-t_{\nu(k)}\geq 1$ for all $k\in \mathbb{N}$.  We set $\mathcal{I}_{k}:=]t_{\nu(k)}-\frac{1}{k},t_{\nu(k)}+\frac{1}{k}[$.  Denote by $\mathcal{N}\subset \mathbb{R}_+$ a null set such that for all $t\in \mathbb{R}_+\setminus \mathcal{N}$,  $\dot{x}(t)$ exists and $-(\partial \varphi_1(x(t))+\partial\varphi_2(x(t)))\cap DF(x(t))(\dot{x}(t))\neq\emptyset$.\\
 \textbf{Claim 1:} For all $n\in\N$, there is $k\geq n$ and $s\in \mathcal{I}_{k}\setminus \mathcal{N}$ such that $\|\dot{x}(s)\|<\frac{1}{\sqrt{n}}$.\\
  \emph{Proof of Claim 1:} By contradiction, suppose there exists $n\in\N$ such that for all $k\geq n$ and $s \in \mathcal{I}_{k}\setminus \mathcal{N}$, we have $\|\dot{x}(s)\|\geq \frac{1}{\sqrt{n}}$. Then,  for all $k\geq n$, we have
		\begin{equation*}
			\int_{\mathcal{I}_{k}}\|\dot{x}(s)\|^2 ds\geq \frac{2}{kn}.
		\end{equation*}
Hence, by using that $t_{\nu(k+1)}-t_{\nu(k)}\geq 1$ for all $k\in \mathbb{N}$, we obtain
		\begin{equation*}
			\int_{0}^{\infty}\|\dot{x}(s)\|^2ds\geq \sum_{k=n}^{\infty} \int_{\mathcal{I}_{k}} \|\dot{x}(s)\|^2ds\geq \frac{2}{n}\sum_{k=n}^{\infty}\frac{1}{k} = \infty,
		\end{equation*}
  which contradicts the fact that $\dot{x}\in L^2(\R_+;\R^d)$.  Hence, the claim is proved.\\
   From the above claim, we have established  the existence of a strictly increasing function $\varsigma\colon \N\to\N$ and a sequence $(s_n)$ such that, for all $n\in \mathbb{N}$,
  $$
  \|\dot{x}(s_n)\|<\frac{1}{\sqrt{n}} \textrm{ and } s_n\in \mathcal{I}_{\varsigma(n)}\setminus \mathcal{N}.
  $$
\textbf{Claim 2:} $x(s_n)\to x^\ast$  as $n\to \infty$.\\
\emph{Proof of Claim 2:} Indeed, by virtue of Hölder's inequality, for all $n\in \mathbb{N}$, we have 
\begin{equation*}
				\|x(s_n)-x(t_{\nu(\varsigma(n))})\| \leq  \int_{\mathcal{I}_{\varsigma(n)}}\|\dot{x}(s)\|ds\leq \sqrt{\frac{2}{\varsigma(n)}}\left(\int_{0}^{\infty}\|\dot{x}(s)\|^2ds\right)^{1/2}.
		\end{equation*}
  Hence, the claim follows from the fact that $\dot{x}\in L^2(\mathbb{R}_+;\mathbb{R}^d)$ and that $\varsigma(n)\to \infty$.\\
  \textbf{Claim 3:} $\operatorname{dist}(0;\partial \varphi_1 (x(s_n)) + \partial \varphi_2 (x(s_n))) \to 0$ as $n\to \infty$.\\
  \emph{Proof of Claim 3:} For all $n\in \mathbb{N}$, since $s_n \in \mathbb{R}_+ \setminus \mathcal{N}$, there exist $v_n\in \partial \varphi_1(x(s_n))$ and $w_n\in \partial \varphi_2(x(s_n))$ such that $-(v_n+w_n)\in DF(x(s_n))(\dot{x}(s_n))$. By virtue of assumptions \ref{FixH2} and \ref{FixH3}, the functions $F$ and $\varphi_2$ are Lipschitz around $x^{\ast}$ with constant $L>0$. Hence,  due to Claim 2, for all sufficiently large $n\in \mathbb{N}$, 
  \begin{equation}\label{eqnew01}
    \operatorname{dist}(0;\partial \varphi_1 (x(s_n)) + \partial \varphi_2 (x(s_n))) \leq \|v_n+w_n\|\leq L\|\dot{x}(s_n)\| \textrm{ and }\|w_n\|\leq L,
  \end{equation}  
which implies the claim, since $\dot{x}(s_n) \to 0$ as $n\to \infty$.\\
\textbf{Claim 4:} $\varphi(x(s_n)) \to \varphi(x^\ast)$ as $n\to \infty$ and $x^\ast\in \mathcal{S}$.\\
\emph{Proof of Claim 4}: Indeed, due to \ref{FixH2}, the function $\varphi_2$ is continuous at $x^{\ast}$. Hence, it is sufficient to prove that $\varphi_1(x(s_n))\to \varphi_1(x^{\ast})$ as $n\to \infty$.  To establish this convergence, we observe that, by virtue of \ref{FixH2},  the subdifferential of $\varphi_2$ is bounded around $x^\ast$. Thus, by  \eqref{eqnew01}, $\sup_{n\in \mathbb{N}}\operatorname{dist}(0,\partial \varphi_1(x(s_n)))<\infty$. Therefore, the convergence follows from   \cite[Corollary 1.7]{MR2252239}. Next, take sequences $v_{n_k}\to v$ and $w_{n_k}\to w$. Since $\dot{x}(s_n)\to 0$ as $n\to \infty$,  we have that $v+w=0$. Moreover, by virtue of \ref{FixH2}, $v\in \partial \varphi_1(x^{\ast})$ and $w\in \partial \varphi_2(x^{\ast})$, which proves that $x^{\ast}\in \mathcal{S}$.\\
\textbf{Claim 5}: If $x^{\ast}$ is an isolated point of $\omega(x)$, then $x(t)\to x^{\ast}$ as $t\to \infty$.\\
\emph{Proof of Claim 5:} Assume that $x^{\ast}$ is an isolated point of $\omega(x)$.  Then there exists $\varepsilon>0$ such that $\mathbb{B}_{\varepsilon}(x^\ast)\cap \omega(x) = \{x^\ast\}$. According to assertion $(a)$ and \cite[Lemma 1.35]{MR3616647}, to prove that $x(t)\to x^{\ast}$, it is sufficient  to prove that $x(\cdot)$ is bounded. \\
Assume, by contradiction, that $x(\cdot)$ is unbounded. Let $t_k\to \infty$ be a strictly increasing sequence such that $x(t_k)\to x^{\ast}$. Then, there exists $N_0\in \mathbb{N}$ such that for all $k\geq N_0$, we have $\Vert x(t_k)-x^{\ast}\Vert <\varepsilon/2$. Since we are assuming that $x(\cdot)$ is unbounded, there exists a strictly increasing $\theta\colon \mathbb{N}\to \mathbb{N}$ with $\theta(1)\geq N_0$ such that, for all $k\in \mathbb{N}$, there exists $t\in [t_{\theta(k)},t_{\theta(k+1)}]$ satisfying $\Vert x(t)-x^{\ast}\Vert \geq \varepsilon/2$. Next, define 
\begin{equation*}
    \tau_k := \inf \{ t \geq t_{\theta(k)} : \|x(t) - x^\ast\| \geq \varepsilon / 2 \},
\end{equation*}
which, by the definition of $\theta$, is well defined and satisfies $\tau_k \in ]t_{\theta(k)}, t_{\theta(k+1)}[$ for all $k \in \mathbb{N}$. Furthermore, by the continuity of $x(\cdot)$, we have
\begin{equation}\label{eqn_convergence}
 \|x(\tau_k) - x^\ast\| \geq \varepsilon / 2 \quad \textrm{ for all } k\in \mathbb{N}.
\end{equation}
It is also clear that for all $t \in [t_{\theta(k)}, \tau_k[$, we have $\|x(t) - x^\ast\| < \varepsilon / 2$. Thus, for all $k \in \mathbb{N}$, there exists $\delta_k \in ]0, 1]$ such that $\|x(\tau_k - \delta_k) - x^\ast\| < \varepsilon / 2$. On the other hand, since $\omega(x) \cap \mathbb{B}_{\varepsilon}(x^\ast) = \{x^\ast\}$, up to a subsequence, we have $x(\tau_k - \delta_k) \to  x^\ast$ as $k\to \infty$. However, for all $k \in \mathbb{N}$,
\begin{equation*}
    \begin{aligned}
        \|x(\tau_k) - x(\tau_k - \delta_k)\| &\leq \int_{\tau_k - \delta_k}^{\tau_k} \|\dot{x}(s)\| \, ds 
        \leq \sqrt{\delta_k} \left( \int_{\tau_k - \delta_k}^{\tau_k} \|\dot{x}(s)\|^2 \, ds \right)^{1/2} \\
        &\leq \left( \int_{\tau_k - 1}^{\infty} \|\dot{x}(s)\|^2 \, ds \right)^{1/2}.
    \end{aligned}
\end{equation*}
Since the right-hand side of the above inequality tends to 0, we conclude that $x(\tau_k) \to x^\ast$ as $k\to \infty$, which contradicts \eqref{eqn_convergence}. Hence, $x(\cdot)$ is bounded, and the proof is finished.\qed

\vspace{-0.4cm}

\subsection{Proof of Theorem \ref{rateundermetric_reg}}\label{Proofofrateundermetric_reg}  For simplicity, define $\mathscr{D}(x):=\partial\varphi_1(x)+\partial\varphi_2(x)$. Let $v(\cdot)$ be a measurable selection of $t\tto \mathscr{D}(x(t))$ such that 
$$-v(t ) \in D F(x(t)) (\dot{x}(t)) \quad \textrm{ for all  } t\in \mathbb{R}_+\setminus \mathcal{N},
$$
where $\mathcal{N}$ is a null set on $ \mathbb{R}_+$. Since, $\mathscr{D}$ is strongly metrically subregular at $(x^\ast,0)$, there exists $\eta >0$ such that 
	\begin{equation}\label{msubreg}
		\|y-x^\ast\|\leq \kappa \cdot \operatorname{dist}(0;\mathscr{D}(y)),\quad \textrm{ for all } y\in \mathbb{B}_{\eta}[x^\ast].
	\end{equation}
 \textbf{Claim 1:} $x(t) \to x^\ast$ as $t \to \infty$.\\
\emph{Proof of Claim 1:} Let $z\in \omega(x)\cap \mathbb{B}_{\eta}(x^\ast)$. According to Theorem \ref{asym_behavior}, we have $0\in \mathscr{D}(z)$. Moreover, from \eqref{msubreg}, it follows that $\|z-x^\ast\|\leq \kappa \cdot \operatorname{dist}(0;\mathscr{D}(z))= 0$, which implies that $x^{\ast}$ is an isolated accumulation point. Hence, by Theorem \ref{asym_behavior},  $x(t)\to x^{\ast}$ as $t\to \infty$.\\ 
   \noindent \textbf{Claim 2:} There exist constants $T,\mathfrak{b},\mathfrak{c}>0$ such that 
   $$
   \|v(t)\|\leq \mathfrak{b}\|\dot{x}(t)\| \textrm{ and } \|x(t)-x^\ast\|\leq \mathfrak{c}\|\dot{x}(t)\| \quad \textrm{ for a.e. } t\geq T.
   $$
   \emph{Proof of Claim 2:}   By Claim 1 and the local Lipschitzianity of $F$ around $x^{\ast}$ (see assumption \ref{FixH3}), there exist $\mathfrak{b} > 0$ and $T \geq 0$  such that 
\begin{equation*}
			\| x(t) - x^\ast\| \leq \eta  \textrm{ for all } t\geq T \, \textrm{ and } \, 
 \| v(t)  \|  \leq  \mathfrak{b} \| \dot{x} (t)\|  \text{ for a.e.  }t \geq T. 
\end{equation*} 	
Then, by using \eqref{msubreg} we have for a.e. $t\geq T$ 
\begin{equation*}
    \|x(t)-x^\ast\|\leq \kappa \|v(t)\|\leq \kappa\cdot \mathfrak{b}\|\dot{x}(t)\|,
\end{equation*}
and we conclude the claim.\\
  \noindent \textbf{Claim 3:} Inequalities \eqref{eq02}  and \eqref{eq_trivial}  hold.  \\
 \emph{Proof of Claim 3:} From Claim 2, it follows that for all $T\leq s\leq t$. 
  \begin{equation*}
      \int_s^t\|x(r)-x^\ast\|^2dr\leq \mathfrak{c}^2\int_s^t \|\dot{x}(r)\|^2dr\leq 
 \frac{\mathfrak{c}^2}{\rho}(\varphi(x(s))-\varphi(x(t))),
  \end{equation*}
thus establishing \eqref{eq02}. Next, setting $\alpha := \int_0^\infty\|\dot{x}(s)\|^2ds$, the above inequality implies that  
   \begin{equation*}
 (t-T)\min_{s\in [T,t]}\|x(s)-x^\ast\|^2\leq \mathfrak{c}^2\alpha \quad \textrm{ for all } t\geq T, 
   \end{equation*}
   which proves \eqref{eq_trivial}. The claim is proved.\\
\noindent \textbf{Claim 4:} If either  $(i)$ or $(ii)$ holds, then \eqref{eq01} is satisfied.  \\
\emph{Proof of Claim 4:}  Define $\mathscr{V}(t) := \varphi(x(t))-\varphi(x^\ast)$. Since the solution $x(\cdot)$ is energetic, the function $\mathscr{V}$ is a.e. differentiable. We will prove the existence of $\mathfrak{g},\tau>0$ such that $\mathscr{V}(s)\leq -\mathfrak{g}\dot{\mathscr{V}}(s)$ for a.e. $s\geq \tau$, which, by virtue of Grönwall's inequality, will imply \eqref{eq01}. \\
First case: assumption $(i)$ holds: Let $\sigma, \delta>0$ be such that $\varphi = \varphi_1$ is $\sigma$-weakly convex on $\mathbb{B}_{\delta}[x^\ast]$. Let $\tau_1\geq T$ be such that for all $t\geq \tau_1$, $x(t)\in\mathbb{B}_{\delta}[x^\ast]$. Next, since $\varphi=\varphi_1$ is $\sigma$-weakly convex and $v(s)\in\partial\varphi(x(s))$ for a.e. $s\geq \tau_1$, we can apply  \cite[Theorem 10.29]{MR4659163} to obtain that for a.e. $s\geq \tau_1$.
\begin{equation*}
    \begin{aligned}
        \mathscr{V}(s) &\leq \langle v(s),x(s)-x^\ast\rangle + \frac{\sigma}{2}\|x(s)-x^\ast\|^2\leq \|v(s)\| \|x(s)-x^\ast\| + \frac{\sigma}{2}\|x(s)-x^\ast\|^2\\
     &\leq \mathfrak{b} \mathfrak{c}\|\dot{x}(s)\|^2 + \frac{\sigma \mathfrak{c}^2}{2}\|\dot{x}(s)\|^2\leq -\frac{1}{\rho}(\mathfrak{b}\mathfrak{c} + \frac{\sigma\mathfrak{c}^2}{2})\mathscr{V}'(s),
    \end{aligned}
\end{equation*}
where we have used Claim 2 and the fact that $x(\cdot)$ is an energetic solution.\\ 
Finally, suppose that $\sigma<\kappa^{-1}$. Then, since $\partial\varphi_1$ is strongly metrically subregular at $(x^\ast,0)$ with modulus $\kappa$, by virtue of \cite[Corollary 3.3]{MR3331214}, there exist $\gamma',\delta'>0$ such that 
\begin{equation*}
    \varphi(x)\geq \varphi(x^\ast) + \gamma'\|x-x^\ast\|^2 \textrm{ for all } x\in \mathbb{B}_{\delta'}[x^\ast]. 
\end{equation*}
Hence, we can find $\tau_1'\geq \tau_1$ such that 
 \begin{equation*}
     \mathscr{V}(s)\geq \gamma'\|x(s)-x^\ast\|^2\quad \textrm{ for all } s\geq \tau_1',
 \end{equation*} 
 which proves \eqref{eqn-bound-varphi1}.\\
Second case: assumption $(ii)$ holds: Let $\sigma',\eta_2>0$ be such that $-\varphi = -\varphi_2$ is $\sigma'$-weakly convex on $\mathbb{B}_{\eta_2}[x^\ast]$. Consider $\tau_2\geq T$ be such that $x(t)\in \mathbb{B}_{\eta_2}(x^\ast)$ for all $t\geq \tau_2$. We observe that $0\in\partial\varphi(x^\ast) = -\partial(-\varphi)(x^\ast)$. Hence, by using \cite[Theorem 10.29]{MR4659163}, Claim 2 and that the $x(\cdot)$ is an energetic solution, we obtain that for a.e. $s\geq \tau_2$, 
 \begin{equation*}
    \mathscr{V}(s) = \varphi(x(s))-\varphi(x^\ast)\leq \frac{\sigma'}{2}\|x(s)-x^\ast\|^2\leq \frac{\sigma'\mathfrak{c}^2}{2}\|\dot{x}(s)\|^2\leq -\frac{\sigma'\mathfrak{c}^2}{2\rho }\mathscr{V}'(s),    
 \end{equation*}
 which ends the proof. \qed

\vspace{-0.2cm}
\subsection{Proof of Theorem \ref{thm-KL}}\label{ProofOf_thm-KL}  Since $x^\ast\in\omega(x)$, it follows from Theorem \ref{asym_behavior} that there exists an increasing sequence $(t_n)_{n\in\N}$ with $ t_n\to \infty$ such that $x(t_n)\to x^\ast$ and $\varphi(x(t_n)) \to \varphi(x^\ast)$ as $n\to\infty$. Since the solution $x(\cdot)$ is energetic, the map $t\mapsto \varphi(x(t))$ is nonincreasing. Hence,  $\varphi(x(t)) \to \varphi(x^\ast)$ as $t\to\infty$. Therefore, if $t_0\geq 0$ is such that $\varphi(x(t_0)) = \varphi(x^\ast)$, it is clear that $\varphi(x(t)) = \varphi(x^\ast)$ for all $t \geq t_0$. Consequently, from $\eqref{integral_prop}$, we obtain that $\dot{x}(t) =0$ and $x(t)=x^\ast$ for all $t \geq t_0$. Thus, without loss of generality,  we can assume that $\varphi(x(t)) >  \varphi(x^\ast) $ for  all $t \geq 0.$\\
    Let $\eta>0$ be such that  \eqref{KL} holds, and define $h(t) :=  \psi(\varphi(x(t))-\varphi(x^\ast))$.  Then, for a.e. $t\geq 0$, we have
		\begin{equation*}
				h'(t) = \psi'(\varphi(x(t))-\varphi(x^\ast)) \frac{d}{dt}(\varphi\circ x)(t).
		\end{equation*}
        By virtue of \ref{FixH3}, there is $L>0$ and $\varepsilon>0$ such that $F$ is $L$-Lipschitz on $\mathbb{B}_{\varepsilon}[x^\ast]$. Without loss of generality assume that $\varepsilon<\eta$. Since  $h(t)\to 0$ as $t\to \infty$, there exists $\hat t$ such that $h(\hat t)-h(t)< \frac{\rho}{3L}\varepsilon$ and $\varphi(x^\ast)<\varphi(x(t))<\varphi(x^\ast)+\eta$ for all $t\geq \hat t$. Additionally, since $x(t_n)\to x^\ast$,  we can assume that $\|x(\hat t)-x^\ast\|<\frac{\varepsilon}{3}$.\\ Set $T := \inf\{ t\geq \hat t: \|x(t)-x^\ast\|\geq \varepsilon \}$, and let $t\in [\hat t,T[$, where $x(\cdot)$ and $\varphi(x(\cdot))$ are differentiable and 
  \begin{equation*}
    -(\partial\varphi_1(x(t))+\partial\varphi_2(x(t)))\cap DF(x(t))(\dot{x}(t))\neq \emptyset.    
  \end{equation*}
  Let $v$ be an element of the above set. Then, given that $v\in DF(x(t))(\dot{x}(t))$ and $F$ is $L$-Lipschitz, we obtain that $\|v\|\leq L\|\dot{x}(t)\|$. Hence, from \eqref{KL}, it follows that 
  \begin{equation*}
          \|v\|\geq \operatorname{dist}(0;\partial\varphi_1(x(t)) + \partial\varphi_2(x(t)))>0.
  \end{equation*}
  Next, since $x(\cdot)$ is energetic, we obtain that $-\frac{d}{dt}(\varphi\circ x)(t)\geq \rho\|\dot{x}(t)\|^2$. Then, for all $t\in [\hat t, T[$,
		\begin{equation*}
			\begin{aligned}
				-h'(t) &= -\frac{d}{dt}(\varphi\circ x)(t)\cdot\psi'(\varphi(x(t))-\varphi(x^\ast))\\
				&\geq \frac{\rho\|\dot{x}(t)\|^2}{\text{dist}(0;\partial\varphi_1(x(t)) + \partial\varphi_2(x(t)))}
                \geq \frac{\rho\|\dot{x}(t)\|^2}{\|v\|}
                \geq \frac{\rho}{L}\|\dot{x}(t)\|,
			\end{aligned}
		\end{equation*}
        which implies that 
        $$\int_{\hat t}^{T}\|\dot{x}(s)\|ds\leq \frac{L}{\rho}(h(\hat t)-h(T))<\frac{\varepsilon}{3}.$$ 
        However, if $T<\infty$, it follows from the absolute continuity of $x(\cdot)$ that $\|x(\hat t)-x(T)\|<\frac{\varepsilon}{3}$. Then, $\|x(T)-x^\ast\|<\frac{2\varepsilon}{3}$, which contradicts the definition of $T$. Therefore, $T=\infty$ and the following inequality holds:
        \begin{equation*}
			\begin{aligned}
				\int_{0}^{\infty}\|\dot{x}(s)\|ds&\leq \left(\hat{t}\int_{0}^{\hat t}\|\dot{x}(s)\|^2ds\right)^{1/2}+\int_{\hat t}^{\infty}\|\dot{x}(s)\|ds\\
				&\leq  \sqrt{\frac{\hat{t}}{\rho}(\varphi(x_0)-\varphi(x(\hat t)))}+\int_{\hat t}^{\infty}\|\dot{x}(s)\|ds<\infty,
			\end{aligned}
		\end{equation*}
		which proves that $\dot{x}\in L^1(\R_+;\R^d)$. This fact implies that $(x(t))_{t\geq 0}$ has the Cauchy property, and given that $x^\ast\in\omega(x)$, we conclude that $x(t)\to x^\ast$ as $t\to\infty$. \qed

\vspace{-0.2cm}
\subsection{Proof of Corollary \ref{Coro-KL}}\label{Proof_of_Coro-KL}  Let us consider $\eta>0$ such that \eqref{KL} holds for all $x \in\mathcal{V}$, where   
     \begin{equation*}
        \mathcal{V} :=\mathbb{B}_{\eta}(x^\ast)\cap \varphi^{-1}(]\varphi(x^\ast),\varphi(x^\ast)+\eta[).    
     \end{equation*}
     Without loss of generality, we can assume that $F$ is $L$-Lipschitz continuous on $\mathbb{B}_{\eta}(x^\ast)$. Hence, by Theorem \ref{thm-KL}, we have that $x(t)\to x^\ast$ and $\varphi(x(t))\to \varphi(x^\ast)$ as $t\to \infty$. Therefore, there exists $T\geq 0$ such that $x(t)\in \mathcal{V}$ for all $t\geq T$. Let us define $\mathscr{V}(t) = \varphi(x(t))-\varphi(x^\ast)$. Using Lemma \ref{lemma_cotingent_lips}, the Lipschitz continuity of $F$, and \eqref{KL}, we have for a.e. $t\geq T$, 
     \begin{equation}\label{ineq-KL-00}
         1\leq \left\|\frac{d}{dt}(F\circ x)(t)\right\|\cdot M(1-\theta)\mathscr{V}(t)^{-\theta}\leq L M(1-\theta)\|\dot{x}(t)\| \mathscr{V}(t)^{-\theta}.
     \end{equation}
     Using that $x(\cdot)$ is energetic,  we obtain that, for all $t,s\geq T$ with $s\leq t$,
     \begin{equation*}
         \begin{aligned}
             \mathscr{V}(s)-\mathscr{V}(t)=\varphi(x(s))-\varphi(x(t)) &\geq \rho\int_s^t \|\dot{x}(\tau)\|^2d\tau
         \geq \frac{\rho}{(L M(1-\theta))^2}\int_s^t \mathscr{V}(\tau)^{2\theta}d\tau.
         \end{aligned}
     \end{equation*}
     Then, for a.e. $t\geq T$, we have
\begin{equation}\label{ineq-dif-KL}
        -\mathscr{V}'(t)\geq \rho \|\dot{x}(t)\|^2 \geq \alpha \mathscr{V}(t)^{2\theta},
     \end{equation}
     where  $\alpha := \frac{\rho}{(L M(1-\theta))^2}$.   Using \eqref{ineq-KL-00} and \eqref{ineq-dif-KL}, we obtain that, for a.e. $t\geq T$,
     \begin{equation*}
         -(\psi\circ \mathscr{V})'(t) = -\mathscr{V}'(t)\cdot \psi'(\mathscr{V}(t))\geq  \frac{\rho}{L}\|\dot{x}(t)\|.
     \end{equation*}
     This inequality implies that, for all $t,s\geq T$ with $t\geq s$,
     \begin{equation*}
         \|x(s)-x(t)\|\leq \int_s^t\|\dot{x}(\tau)\|d\tau\leq \frac{L}{\rho}(\psi(\mathscr{V}(s))-\psi(\mathscr{V}(t))).
     \end{equation*}
     Taking $t\to\infty$ and using that $\psi(0) = 0$ along with \eqref{ineq-KL-00}, we obtain that for all $s\geq T$,
     \begin{equation}\label{ineq-dif-KL-5}
         \|x(s)-x^\ast\|\leq \int_s^\infty \|\dot{x}(\tau)\|d\tau\leq \frac{L}{\rho}\psi(\mathscr{V}(s))=\frac{L M}{\rho}\mathscr{V}(s)^{1-\theta}\leq \mathfrak{w}\|\dot{x}(s)\|^{\frac{1-\theta}{\theta}},
     \end{equation}
     where $\mathfrak{w} := \frac{L M}{\rho}(L M (1-\theta))^{\frac{1-\theta}{\theta}}$.\\
     \textbf{Claim 1:} Assertion $(a)$ holds.\\
     \emph{Proof of Claim 1:} Indeed, assume that $\theta\in [0,1/2[$ and suppose that there exists  $T^\ast\geq T$ such that  $\mathscr{V}(T^\ast)=0$. In this case, we have that   $\mathscr{V}(t) = 0$ for all $t\geq T^\ast$ (recall that $\mathscr{V}$  is nonincreasing). Now, suppose, for the sake of contradiction that, 
 $\mathscr{V}(t)>0$ for all $ t\geq T^\ast$. Using \eqref{ineq-dif-KL},  we then have 
     \begin{equation*}
         -\alpha\geq \frac{\mathscr{V}'(t)}{\mathscr{V}(t)^{2\theta}} = \frac{1}{1-2\theta}\frac{d}{dt}(\mathscr{V}(t)^{1-2\theta}),  \quad \text{
     for a.e. }t\in [T,\infty[.
     \end{equation*}
    Hence, given that $\mathscr{V}^{1-2\theta}$ is nonincreasing, we have that  for all $t\in [T,\infty[$,
     \begin{equation}\label{theta-1/2-gronwall}
     \mathscr{V}(t)^{1-2\theta}\leq \mathscr{V}(T)^{1-2\theta}-\alpha(1-2\theta)(t-T).
     \end{equation}
Taking $t\to \infty$ in \eqref{theta-1/2-gronwall}, we arrive to a contradiction since $\mathscr{V}(t)>0$ for all $t\geq T$. This implies that $T^\ast<\infty$, so $(\varphi(x(t)))_{t\geq 0}$ converges in finite time to $\varphi(x^\ast)$. Finally, using \eqref{ineq-dif-KL-5}, we conclude that $x(t)$ also converges in finite time to $x^\ast$. \\
     \textbf{Claim 2:} If $\theta=1/2$, there exist $\nu_1,\nu_2>0$ such that 
     $$
     \mathscr{V}(x(t))\leq \nu_1\exp(-\alpha t)  \textrm{ and } \|x(t)-x^\ast\|\leq \nu_2\exp(-\alpha t/2) \textrm{ for all } t\geq T.
     $$
    \emph{Proof of Claim 2:} By virtue of \eqref{ineq-dif-KL}, we have $\mathscr{V}'(t)\leq -\alpha \mathscr{V}(t)$ for a.e. $t\geq T$. Thus, $\frac{d}{dt}(e^{\alpha t}\mathscr{V}(t))\leq 0$, which implies the existence of a constant $\nu_1>0$ such that 
    $$
    \mathscr{V}(t)\leq \nu_1\exp(-\alpha t) \textrm{ for all } t\geq T.
    $$
   Using \eqref{ineq-dif-KL-5}, we then have for all $s\geq T$, 
    \begin{equation*}
        \|x(s)-x^\ast\|\leq \frac{\kappa M}{\rho}\mathscr{V}(s)^{1/2}\leq \frac{\kappa M\sqrt{\nu_1}}{\rho}\exp{(-\alpha t/2)}.
    \end{equation*}
    \textbf{Claim 3:} If $\theta\in ]1/2,1[$, there exist $\mathscr{G}>0$ such that 
    $$
    \|x(s)-x^\ast\|\leq \mathscr{G} s^{\frac{1-2\theta}{1-\theta}} \textrm{ for all  }s\geq T.
    $$
    \emph{Proof of Claim 3:} Using \eqref{ineq-dif-KL-5}, we have for all $s\geq T$, 
    \begin{equation*}
        \int_s^\infty \|\dot{x}(\tau)\|d\tau\leq \mathfrak{w}\|\dot{x}(s)\|^{\frac{1-\theta}{\theta}}.    
    \end{equation*}
    Define $\phi(s) := \int_s^\infty\|\dot{x}(\tau)\|d\tau$. Note that $\phi$ is absolutely continuous. Then, there exists a constant $\mathscr{E}>0$ such that $\phi'(s)\leq \mathscr{E}\phi(s)^{\frac{\theta}{1-\theta}}$ for a.e. $s\geq T$. We then have 
    $$\frac{d}{ds}(\phi(s)^{\frac{1-2\theta}{1-\theta}})\leq \mathscr{E}\frac{2\theta-1}{1-\theta}.$$ 
    Integrating the above inequality, we obtain that for all $s\geq T$,
    \begin{equation*}
        \phi(s)^{\frac{1-2\theta}{1-\theta}}\leq \mathscr{E}\frac{2\theta-1}{1-\theta}(s-T) + \phi(T)^{\frac{1-2\theta}{1-\theta}}\leq \mathscr{E}\frac{2\theta-1}{1-\theta}s + \phi(T)^{\frac{1-2\theta}{1-\theta}}.
    \end{equation*}
    It follows that there exist constants $\mathscr{Y},\mathscr{F}>0$ such that
    \begin{equation*}
        \phi(s) \leq \frac{1}{(\mathscr{Y} + \mathscr{F}s)^{\frac{2\theta-1}{1-\theta}}}\leq (\mathscr{F}s)^{\frac{1-2\theta}{1-\theta}}.
    \end{equation*}
    We conclude the result using \eqref{ineq-dif-KL-5}. \\
    \textbf{Claim 4:} If $\theta\in ]1/2,1[$, there exists $\mathscr{C}>0$ s.t.  $\varphi(x(t))-\varphi(x^\ast)\leq \mathscr{C} s^{\frac{1}{1-2\theta}}$, for all $s\geq T$.\\
    \emph{Proof of Claim 4:} Observe that in this case, \eqref{theta-1/2-gronwall} remains valid as long as convergence in finite time does not occur (i.e., $\mathscr{V}(t)>0$ for all $t\geq T$). Since $\theta>1/2$, it follows that for all $s\geq T$,
    \begin{equation*}
        \mathscr{V}(s)\leq (V(T)^{1-2\theta} + \alpha(2\theta - 1)s)^{\frac{1}{1-2\theta}} \leq (\alpha(2\theta-1))^{\frac{1}{1-2\theta}} s^{\frac{1}{1-2\theta}}, 
    \end{equation*}
    which completes the proof. \qed

	\bibliographystyle{plain}
	\bibliography{references}
\end{document}


\maketitle

\section{A detailed example}

Here we include some equations and theorem-like environments to show
how these are labeled in a supplement and can be referenced from the
main text.
Consider the following equation:
\begin{equation}
  \label{eq:suppa}
  a^2 + b^2 = c^2.
\end{equation}
You can also reference equations such as \cref{eq:matrices,eq:bb} 
from the main article in this supplement.

\lipsum[100-101]

\begin{theorem}
  An example theorem.
\end{theorem}

\lipsum[102]
 
\begin{lemma}
  An example lemma.
\end{lemma}

\lipsum[103-105]

Here is an example citation: \cite{KoMa14}.

\section[Proof of Thm]{Proof of \cref{thm:bigthm}}
\label{sec:proof}
\lipsum[106-112]

\section{Additional experimental results}
\Cref{tab:foo} shows additional
supporting evidence. 

\begin{table}[htbp]
{\footnotesize
  \caption{Example table}  \label{tab:foo}
\begin{center}
  \begin{tabular}{|c|c|c|} \hline
   Species & \bf Mean & \bf Std.~Dev. \\ \hline
    1 & 3.4 & 1.2 \\
    2 & 5.4 & 0.6 \\ \hline
  \end{tabular}
\end{center}
}
\end{table}

\bibliographystyle{siamplain}
\bibliography{references}